\numberwithin{equation}{section}
\newtheorem{theorem}{Theorem}[section]
\newtheorem{lemma}{Lemma}[section]
\newtheorem{proposition}{Proposition}[section]
\theoremstyle{remark}
\newtheorem*{remark}{Remark}
\theoremstyle{definition}
\newtheorem*{definition}{Definition}
\newtheorem*{example}{Example}
\title{Reduced models of cardiomyocytes excitability:\\ comparing
Karma and FitzHugh-Nagumo}
\author{Maria Elena Gonzalez Herrero, Christian Kuehn\\ and Krasimira Tsaneva-Atanasova}
\begin{document}

\pagenumbering{arabic} 
\pagestyle{fancy} 
\fancyhf{}
\fancyfoot[CE,CO]{\thepage} 
\fancyhead[RO]{\leftmark} 
\fancyhead[LE]{\rightmark} 

\maketitle

\begin{abstract}
    Since Noble adapted in 1962 the model of Hodgkin and Huxley to fit Purkinje fibres the refinement of models for cardiomyocytes has continued. Most of these models are high-dimensional systems of coupled equations so that the possible mathematical analysis is quite limited, even numerically. This has inspired the development of reduced, phenomenological models that preserve qualitatively the main featuture of cardiomyocyte's dynamics. In this paper we present a systematic comparison of the dynamics between two notable low-dimensional models, the FitzHugh-Nagumo model \cite{FitzHugh55, FitzHugh60, FitzHugh61} as a prototype of excitable behaviour and a polynomial version of the Karma model \cite{Karma93, Karma94} which is specifically developed to fit cardiomyocyte's behaviour well. We start by introducing the models and considering their pure ODE versions. We analyse the ODEs employing the main ideas and steps used in the setting of geometric singular perturbation theory. Next, we turn to the spatially extended models, where we focus on travelling wave solutions in 1D. Finally, we perform numerical simulations of the 1D PDE Karma model varying model parameters in order to systematically investigate the impact on wave propagation velocity and shape. In summary, our study provides a reference regarding key similarities as well as key differences of the two models.
\end{abstract}

\section{Introduction}
    Excitability is a fundamental property of cardiomyocytes that generally plays a very important role in biology and medicine. Indeed, phenomena such as exchange of information between neurons, muscle contraction including cardiac arrhythmias, the emergence of organised patterns during development, all emerge as a result of excitable behaviour.\\

    Hodgkin and Huxley proposed the first ionic model to represent excitable behaviour, namely action potentials in a nerve fibre \cite{Hodgkin52}. This model is given by a 4-dimensional system of differential equations with one variable for the voltage and three gating variables for the ion channels. By adapting those equations to cardiac cells Noble developed in \cite{Noble62} a similar model for Purkinje cells opening up a new line in mathematical modelling focused on the heart. Since then there have been many different models either including different currents or ionic pumps (see for example further models developed by Noble et al.~\cite{McAllister75, DiFrancesco85}) and also models for other parts of the heart instead of Purkinje fibres like e.g. ventricular cells in the Beeler-Reuter model \cite{Beeler77}. To find a more extensive list of cardiac cell models see \cite{ScholarCellModels}.\\
    
    All the models mentioned above are quite complex with at least four variables and highly nonlinear. That makes analytical results often impossible, or at least extremely cumbersome. Furthermore, many models are even too complex for detailed numerical analysis and simulation for many parameters. This is why already in 1962 FitzHugh developed a simplified model of the Hodgkin-Huxley equations as a variation of the van der Pol oscillator \cite{vdP20,vdP26}, which focused on capturing the excitable properties of the system (see \cite{FitzHugh55, FitzHugh60, FitzHugh61}). Almost at the same time Nagumo published the corresponding electrical circuit in \cite{Nagumo62}. Their model reduces the four-dimensional system of Hodgkin and Huxley into two equations separating the fast time-scale of the excitation and the slow time-scale of recovery. The equations are given by
    \begin{equation}\label{eq:FHN}
		\begin{aligned}
			\frac{\partial v}{\partial t} &= D\Delta v + v-\frac{v^3}{3}-w+I\\
			\frac{\partial w}{\partial t} &= \varepsilon(v+a-bw)
		\end{aligned}
	\end{equation}
	where $v$ corresponds to the voltage and $w$ to a slow gating variable, $a,b$ are model parameters, $D$ is the diffusion coefficient and $I$ is an external current. Although its representation of nerve fibres or cardiac cells is not as precise as it would be with a more complex model, the FitzHugh-Nagumo (FHN) model has been studied extensively in the literature and is one of the prototype models for excitable media due to its simplicity.\\
    
    In the same spirit as FitzHugh, Karma developed in \cite{Karma93,Karma94} a reduced version of the Noble model preserving the fast-slow structure of FitzHugh-Nagumo. Here we present a systematic analysis of the Karma model comparing it to the FitzHugh-Nagumo model given that both models have been extensively used to model the behaviour of cardiomyocytes. Some additional references  can be found in \cite{Beck08, Mitchell03} for the Karma model and \cite{Barkley91, Biktashev03, Postnikov16} for the FitzHugh-Nagumo model. In section 2 we show under what assumptions the 1993 \cite{Karma93} and 1994 \cite{Karma94} versions of the Karma model are equivalent. In section 3 we present a systematic comparison of the FitzHugh model eqref{eq:FHN} and the Karma model as defined in section 2. We conclude our comparison in section 4 with numerical simulations of the full PDE systems with a focus on the Karma model.
    
\section{The Karma model}
    As mentioned above, the Karma model introduced in \cite{Karma93} in 1993 is a two-variable model involving one fast and one slow variable similarly to the FitzHugh-Nagumo model but incorporating additionally important dynamic features of the Noble model for
    cardiac cells. The Karma model equations read
	\begin{equation}\label{eq:Karma93}
		\begin{aligned}
			\varepsilon\frac{\partial E}{\partial t} &= \varepsilon^2\Delta E-E+\left(E^*-\left(\frac{n}{n_B}\right)^M\right)(1-\tanh(E-3))\frac{E^2}{2},\\
			\frac{\partial n}{\partial t} &= \theta(E-1)-n.
		\end{aligned}
	\end{equation}
	with the function $\theta(x)=\max\{x,0\}$, which is also known as a rectifier and is commonly employed currently as a rectified linear unit (ReLU) in machine learning. The parameter $0<n_B<1$ controls the position of the excitable wave and the parameter $M\gg1$ controls the insensitivity of the excitable wave velocity with respect to the slow gating variable $n$. Furthermore the constant $E^*=1.5415$ has been fitted such that
	\begin{equation}\label{eq:conditionE^*}
	    f_E(E,n_B)=\frac{\partial}{\partial E}f_E(E,n_B)=0
	\end{equation}
	for some $E$ where $f_E$ is the right-hand side of the first equation without the diffusion term.\\
	
	In a follow-up paper~\cite{Karma94} in 1994 the model was formalized in a slightly more general way as follows\\
	\begin{equation}\label{eq:Karma94}
		\begin{aligned}
			\frac{\partial E}{\partial t} &= \gamma\Delta E+\tau_E^{-1}[-E+\left(E^*-\mathscr{D}(n)\right)h(E)],\\
			\frac{\partial n}{\partial t} &= \tau_n^{-1}[\mathscr{R}(n)\theta(E-1)-(1-\theta(E-1))n].
		\end{aligned}
	\end{equation}
	One may view $\tau_E$ and $\tau_n$ as defining the scales for the reaction terms at which $E$ and $n$ respectively evolve. We therefore define $\varepsilon=\tau_E/\tau_n$ as a single parameter separating the time scales. Next, to make sure there are exactly two stable equilibria for $n$ fixed (corresponding to the depolarized and polarized states) a common choice~\cite{Karma93,Karma94} for the reaction function $h$ is
	\begin{equation}\label{eq:h}
		h(E)=(1-\tanh(E-3))\frac{E^2}{2}
	\end{equation}
	and the parameter $E^*$ is kept as defined above. Alternatively, a common suggestion~\cite{Karma93,Karma94} is a function of the form $h(E)=E^2-\delta E^3$ which we will treat in more detail later.\\
	
	To fully define the model we still have to specify the restitution function $\mathscr{R}(n)$ and dispersion function $\mathscr{D}(n)$. The former is responsible for the length $A$ of an action potential after a diastolic or rest interval of length $D$. The latter function defines the relation between the dispersion velocity $c$ of a pulse with respect to the previous diastolic interval. In theory, both functions can be chosen to fit arbitrary restitution and dispersion curves of the system to be modelled.\\
	\\
	For $\varepsilon$ small Karma presents the unique relation
	\begin{equation}
		\mathscr{R}(n)=\frac{n}{\left(\frac{\text dA}{\text dD}\right)_{D=\tau_n\ln(1/n)}}
	\end{equation}
	where $A(D)$ is the restitution curve. Choosing 
	\begin{equation}
		\mathscr{R}(n)=\frac{1-(1-e^{-Re})n}{1-e^{-Re}}
	\end{equation}
	leads to the restitution curve $A(D)=A_{max}+\tau_n\ln(1-(1-e^{-Re})e^{-D/\tau_n})$ and the control parameter $Re$ for the restitution properties.\\
	\\
	Similarly, for the dispersion curve with $\varepsilon$ small we have the relation
	\begin{equation}
		c(D)=\left(\frac{\gamma}{\tau_E}\right)^{1/2}C(\mathscr{D}(e^{-D/\tau_n}))
	\end{equation}
	where $c(D)$ is the dispersion curve and $C$ is a function that can be fitted numerically by a third order polynomial. Karma chooses the simple dispersion function
	\begin{equation}
		\mathscr{D}(n)=n^M
	\end{equation}
	with the control parameter $M$ for the dispersion properties.\\
	
    Having the full definition of the model of 1994 we have to check that both versions of the model, introduced in 1993 and 1994 respectively, are in fact equivalent.
	
	\begin{proposition}
	    The models \eqref{eq:Karma93} and \eqref{eq:Karma94} with the functions $h(E),\mathcal{R}(n)$ and $\mathcal{D}(n)$ chosen as above are equivalent for an appropriate value of $\gamma$.
	\end{proposition}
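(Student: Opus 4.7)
The plan is to verify the equivalence by identifying the parameters $\tau_E,\tau_n,\gamma$ of \eqref{eq:Karma94} in terms of $\varepsilon$ and $n_B$, together with a mild rescaling of the slow variable $n$, so that \eqref{eq:Karma94} reduces algebraically to \eqref{eq:Karma93}. I will handle the fast equation first, then the slow equation, and finally reconcile the two through a single change of variable.

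First I would handle the fast equation. Multiplying the $E$-equation of \eqref{eq:Karma94} by $\tau_E$ and matching the coefficients of $\partial_t E$ and $\Delta E$ against \eqref{eq:Karma93} forces $\tau_E=\varepsilon$ and $\gamma\tau_E=\varepsilon^2$, hence $\gamma=\varepsilon$. Since $\varepsilon=\tau_E/\tau_n$, this also pins down $\tau_n=1$. The reaction term $-E+(E^*-\mathscr{D}(n))h(E)$ then reproduces the 1993 right-hand side once $h$ is chosen according to \eqref{eq:h} and $\mathscr{D}(n)=n^M$, \emph{up to} the factor $n_B^M$ hidden in $(n/n_B)^M$ appearing in \eqref{eq:Karma93}.

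For the slow equation I would use the elementary identity $\mathscr{R}(n)\theta(E-1)-(1-\theta(E-1))n=[\mathscr{R}(n)+n]\theta(E-1)-n$, which holds regardless of whether $\theta$ is the Heaviside function or the ReLU. With the Karma choice $\mathscr{R}(n)=1/(1-e^{-Re})-n$ this reduces to $\tau_n\partial_t n=c\,\theta(E-1)-n$ with $c:=1/(1-e^{-Re})$. Performing the rescaling $\bar n=n/n_B$ in \eqref{eq:Karma93} yields $\partial_t\bar n=(1/n_B)\theta(E-1)-\bar n$ in the slow equation and simultaneously converts the fast reaction nonlinearity exactly into $(E^*-\bar n^M)h(E)$, i.e., into the 1994 form with $\mathscr{D}(\bar n)=\bar n^M$. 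Matching the coefficients in front of $\theta(E-1)$ and $\bar n$ then yields $\tau_n=1$ together with $c=1/n_B$, which rearranges to the explicit relation $Re=-\ln(1-n_B)$; this is positive precisely because $0<n_B<1$, so all identifications are admissible.

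The hard part is reconciling the $n_B$-factor of \eqref{eq:Karma93} with the un-normalized dispersion function $\mathscr{D}(n)=n^M$ used by Karma in 1994: without some rescaling the dispersion terms simply do not agree term by term. The key observation that makes the equivalence go through is that the single rescaling $\bar n=n/n_B$ is compatible with both the fast and the slow equation at once; it merely inserts a constant factor $1/n_B$ in front of $\theta(E-1)$ in the slow equation, and that factor can be matched exactly by the free parameter $c=1/(1-e^{-Re})$ already present in the Karma restitution function. Collecting the parameter identifications $\tau_E=\varepsilon$, $\tau_n=1$, $\gamma=\varepsilon$, and $Re=-\ln(1-n_B)$, together with the change of variable $\bar n=n/n_B$, then establishes that the two systems are identical as PDEs.
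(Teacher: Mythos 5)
Your proof is correct and follows essentially the same route as the paper: rescale the gating variable by $n_B$, identify $n_B=1-e^{-Re}$ so that the specific form of $\mathscr{R}$ collapses the slow equation to $\theta(E-1)$--linear form, and fix the remaining time constants and $\gamma$ to match the diffusion term. The only (immaterial) difference is the choice of normalization --- you keep the original time and set $\tau_E=\varepsilon$, $\tau_n=1$, giving $\gamma=\varepsilon$, whereas the paper rescales time by $\tau_n$ and sets $\tau_E=1$, arriving at $\gamma=\varepsilon^2$; both are admissible since the statement only asks for ``an appropriate value of $\gamma$''.
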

	\begin{proof}
    	We start by rescaling time in equations \eqref{eq:Karma94} with $\tilde t=\tau_n^{-1}t$ and dropping the tildes
    	\begin{equation}
    		\begin{aligned}
    			\varepsilon\frac{\partial E}{\partial t} &= \tau_E\gamma\Delta E+-E+\left(E^*-\mathscr{D}(n)\right)h(E)\\
    			\frac{\partial n}{\partial t} &= \mathscr{R}(n)\theta(E-1)-(1-\theta(E-1))n
    		\end{aligned}
    	\end{equation}
    	Furthermore we rescale the gating variable $\tilde n=n/n_B$ and use the parameter transformation $n_B=1-e^{-Re}$. Again, after dropping the tildes we have
    	\begin{equation}
    		\begin{aligned}
    			\varepsilon\frac{\partial E}{\partial t} &= \tau_E\gamma\Delta E-E+\left(E^*-\left(\frac{n}{n_B}\right)^M\right)h(E)\\
    			\frac{\partial n}{\partial t} &= \theta(E-1)-n
    		\end{aligned}
    	\end{equation}
    	which differs from the 1993 model \eqref{eq:Karma93} only in the diffusion parameter. Since the model is non-dimensional we can assume without loss of generality that $\tau_E=1$. In particular $\tau_E$ is independent of $\varepsilon$ so we have again a slightly more general formulation of the same model. By choosing $\gamma=\varepsilon^2$ we obtain the 1993 model.
	\end{proof}
	
	
	In the remainder of this paper we use the simpler form of the reaction function mentioned above. To stay as close to the function used by Karma as possible we have chosen the reaction function
	\begin{equation}
	    h(E)=2(E^2-\frac{1}{4}E^3)
	\end{equation}
	which is the third-order Taylor expansion of \eqref{eq:h} at $E=3$. 
	\begin{center}
		\includegraphics[width=0.75\textwidth]{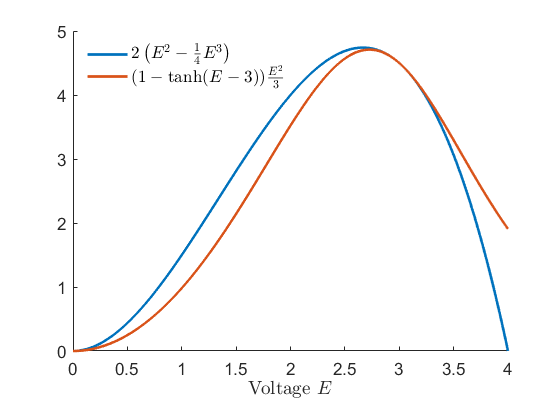}
		\captionof{figure}{Reaction function used in \cite{Karma93, Karma94} (orange) together with the function $h(E)$ we are going to analyse in this paper (blue).}
	\end{center}
	Due to the change in the reaction function we have to adapt the parameter $E^*$ such that condition \eqref{eq:conditionE^*} is still satisfied, which yields $E^*=1.5$.\\
	
	Summarizing, in the reminder of the paper we are analysing the following model equations
    \begin{equation}\label{eq:Karma}
        \begin{aligned}
        	\frac{\partial E}{\partial t} &= D\Delta E-E+2\left(E^*-n^M\right)(E^2-\delta E^3)\\
        	\frac{\partial n}{\partial t} &= \varepsilon\left(\frac{1}{n_B}\theta(E-1)-n\right)
        \end{aligned}
    \end{equation}
    for $E^*=1.5$ and $\delta=0.25$ where we used a mixed form of the scalings in \cite{Karma93} and \cite{Karma94}.
		
\section{FitzHugh-Nagumo and Karma model}

    In this section we proceed analysing and comparing the Karma model \eqref{eq:Karma} to the classical FHN system \eqref{eq:FHN}. We note that both models are two-dimensional systems with a clear fast-slow structure and a diffusive term for the fast variable representing the voltage. A concise introduction into the mathematical theory we will be applying is given in Appendix \ref{sec:theory}. In this paper, we are going to focus on illustrating and extracting the main geometric and analytical insights needed in the proofs of different types of dynamics, which makes it more transparent, where the similarities and differences between the two models are, and how to interpret these differences biologically. 
    
    
    \subsection[Pure ODE models]{Pure ordinary differential equations (ODE) models}\label{sec:ODE}
        We start by comparing the simplified version of both models by considering the pure ODE models, i.e. we set the diffusion coefficients equal 0. Hence we are working with the equations
        \begin{equation}\label{eq:KarmaODE}
        	\begin{aligned}
        		E' &=-E+2\left(E^*-n^M\right)(E^2-\delta E^3)+I\\
        		n' &= \varepsilon\left(\frac{1}{n_B}\theta(E-1)-n\right)
        	\end{aligned}
        \end{equation}
        with $E^*=1.5$, $\delta=0.25$, $M\gg1$ and $0<n_B<1$ for the Karma model and comparing them to the FHN equations
        \begin{equation}\label{eq:FHNODE}
        	\begin{aligned}
        		v'&=v-\frac{v^3}{3}-w+I\\
        		w'&=\varepsilon(v+a-bw)
        	\end{aligned}
        \end{equation}
        with $0<b<1$ and $1-\frac{2}{3}b<a<1$.

        \subsubsection{FitzHugh-Nagumo}\label{sec:FHN}
        
        	The FitzHugh-Nagumo model has been analyzed extensively in the literature due to its simplicity and generality. In this paper we choose the parameter values $a=0.7$ and $b=0.8$ as standard configuration for cardiac cells following \cite{FitzHugh61} such that for $I=0$ the unique equilibrium is stable corresponding to the polarized state. For completeness we present now a short overview over the most important steps of the analysis of the ODE system when $I=0$ by exploited the time-scale separation in the system. For more extensive proofs and deeper analysis of the FitzHugh-Nagumo model see \cite{Jones91, Jones95, Rauch78, Rocsoreanu00}.\\
        	 
        	In the FitzHugh-Nagumo model \eqref{eq:FHNODE} the flow is always controlled by the third order critical manifold as we can observe in its phase plane in Figure \ref{fig:FHN}. The manifold can be divided by its extrema into three branches, where the outer ones are attracting and the middle branch is repelling, therefore the flow away from the critical manifold will approach fast to one of the outer branches. When $I=0$, orbits on the middle or close to the right branch follow the slow flow upwards towards the maximum where they jump fast towards the left branch. Once close to the left branch every orbit will finally converge towards the sable equilibrium.\\
        	
        	\begin{center}
            	\includegraphics[width=.7\textwidth]{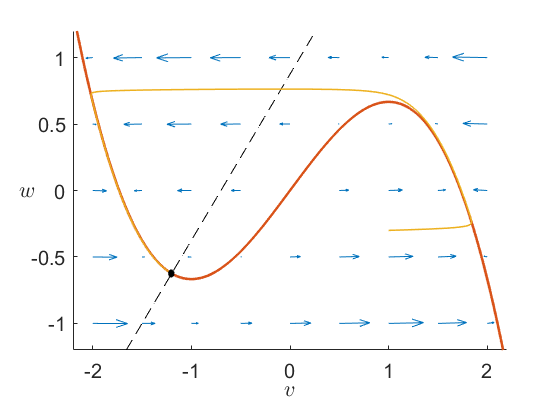}
            	\captionof{figure}{\label{fig:FHN}Phase plane of the FitzHugh-Nagumo system for $I=0$. We can see the critical manifold (orange) and the $w$-nullcline (dashed) as well as a prototypical orbit (yellow) converging to the global equilibrium (black).}
        	\end{center}
        	
        	The next theorems formalize the main results. The proof is based on the decomposition of the different time scales when $\varepsilon$ is small. For this reason we first look in Theorem \ref{th:FHN0} at the singular limit separating the analyse of the layer problem and the reduced system before constructing the candidate orbits. Finally in Theorem \ref{th:FHNepsilon} we perturbed the candidate orbits for $\varepsilon>0$.
        	
        	\begin{theorem}\label{th:FHN0}
            	In the singular limit $\varepsilon=0$ of the FitzHugh-Nagumo equations \eqref{eq:FHNODE} with $I=0$ we have a unique stable equilibrium and the third order critical manifold
            	\begin{equation}\label{eq:C0FHN}
            		C_0=\{(v,w):w=v-\frac{v^3}{3}+I\}.
            	\end{equation}
            	Every candidate orbit can be constructed as concatenation of fast segments converging to one of the outer branches of $C_0$ which are attracting and slow segments on the critical manifold. Eventually all orbits converge to the fixed point.
        	\end{theorem}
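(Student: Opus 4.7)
The plan is to apply standard geometric singular perturbation theory (GSPT) in the singular limit $\varepsilon=0$, decomposing the flow into a layer (fast) problem and a reduced (slow) problem. First I would rewrite \eqref{eq:FHNODE} on the fast time scale; setting $\varepsilon=0$ yields
\begin{equation*}
v' = v - \tfrac{v^3}{3} - w + I, \qquad w' = 0.
\end{equation*}
The equilibrium set of this layer problem is exactly $C_0$ from \eqref{eq:C0FHN}. Normal hyperbolicity is then checked by differentiating the right-hand side with respect to $v$: the layer eigenvalue $1-v^2$ is negative on the outer branches $\{|v|>1\}$ and positive on the middle branch $\{|v|<1\}$, with loss of normal hyperbolicity precisely at the fold points $v=\pm 1$. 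Because the layer problem is one-dimensional and monotone in $v$ between consecutive equilibria, every initial condition off the codimension-one stable manifold of the middle branch is driven onto one of the two attracting branches.

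Next I would locate the equilibria of the full ODE with $I=0$ by intersecting $C_0$ with the slow nullcline $w=(v+a)/b$, which reduces to
\begin{equation*}
v^3 + 3\Bigl(\tfrac{1}{b}-1\Bigr)v + \tfrac{3a}{b} = 0.
\end{equation*}
Since $0<b<1$ the derivative $3v^2+3(1/b-1)$ is strictly positive, so the cubic is monotone and has a unique real root $v^*$. Evaluating at $v=-1$ gives $2+3(a-1)/b$, which is strictly positive under $a>1-2b/3$; hence $v^*<-1$ and the equilibrium lies on the left attracting branch. Local asymptotic stability follows directly from the Jacobian: the trace is $1-(v^*)^2-\varepsilon b<0$ and the determinant is $\varepsilon[1-b+b(v^*)^2]>0$.

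Finally I would assemble the candidate orbits by concatenating layer and reduced segments in the Fenichel style. Differentiating the branch equation $w=v-v^3/3+I$ and eliminating $\dot w$ yields the reduced flow on $C_0$,
\begin{equation*}
(1-v^2)\,\dot v = v + a - b\bigl(v - \tfrac{v^3}{3}\bigr),
\end{equation*}
whose sign is easy to track on each branch. On the right branch the slow flow moves monotonically upwards until it hits the right fold at $v=1$; the layer flow then carries the trajectory horizontally to the left branch, where the reduced flow drives it monotonically down to $v^*$. The main obstacle is the apparent singularity of the reduced equation at the fold points, which I would handle in the standard GSPT way by desingularising through multiplication by $1-v^2$ and verifying that $v+a-b(v-v^3/3)$ is nonzero at $v=\pm 1$ under the standing parameter assumptions; this shows the folds are generic jump points and rules out canard behaviour, so the concatenation of fast and slow segments is unambiguous. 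Gluing the pieces produces a candidate orbit through any initial condition, and all such orbits eventually remain on the left branch and converge to the unique equilibrium.
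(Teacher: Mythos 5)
Your proposal is correct and follows essentially the same route as the paper's own sketch: decompose into layer and reduced problems, classify the branches of $C_0$ by the sign of $1-v^2$, locate the unique equilibrium on the left attracting branch via the nullcline intersection, and concatenate slow segments with fast jumps at the generic folds. You simply make explicit several computations the paper leaves implicit (the monotone cubic for the equilibrium, the desingularised reduced flow, and the trace/determinant check), which is a welcome but not substantively different elaboration.
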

        	\begin{proof}[Sketch of the Proof.]
            	The layer problem defines a one-dimensional system with equilibria given by \eqref{eq:C0FHN} and their Jacobian is negative on the outer branches, positive in the middle branch and 0 on both extrema. The non-zero eigenvalues divide the critical manifold into 2 attracting outer branches and a repelling inner branch while both non-hyperbolic points satisfy the conditions of generic folds.\\
            	On the other hand, the slow flow is determined by the $w$-nullcline which is a linear function. On the left of this line the flow moves downwards while it moves upwards on the right. Last, the $w$-nullcline crosses $C_0$ exactly once, for $I=0$ the intersection occurs on the left branch of the critical manifold which results in a stable equilibrium as previously mentioned.\\
            	The candidate orbits can now be constructed following first the fast flow towards one of the attracting branches. On the middle and right branch of the critical manifold we converge by the slow flow to the maximum where we switch again to a fast fiber connecting to the left branch. Finally, on the left branch the slow flow converges to the equilibrium.
        	\end{proof}
        	
        	To finish our analysis we show that the candidate orbits we constructed when $\varepsilon=0$ correspond in fact to solutions of the FitzHugh-Nagumo model for $\varepsilon>0$.
        	
        	\begin{theorem}\label{th:FHNepsilon}
        	    The candidate orbits found in the singular limit $\varepsilon=0$ of equations \eqref{eq:FHNODE} when $I=0$ can be perturbed to solution curves of the full system with $\varepsilon>0$.
        	\end{theorem}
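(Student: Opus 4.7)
The plan is to upgrade the singular candidate orbits to genuine trajectories of \eqref{eq:FHNODE} for $\varepsilon>0$ by applying geometric singular perturbation theory in two stages: first invoking Fenichel's theorem on the normally hyperbolic portions of $C_0$, then handling the non-hyperbolic fold extrema with a dedicated fold analysis, and finally concatenating the resulting pieces along the (perturbed) fast foliation.

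First, fix a small $\rho>0$ and consider the compact pieces $C_0^{L,\rho}$ and $C_0^{R,\rho}$ obtained from the left and right branches of $C_0$ by removing $\rho$-neighborhoods of the two fold points. On these pieces the layer Jacobian is strictly negative, so Fenichel's theorem applies and produces, for every sufficiently small $\varepsilon>0$, locally invariant attracting slow manifolds $C_\varepsilon^{L,\rho}$ and $C_\varepsilon^{R,\rho}$ that are $O(\varepsilon)$-close to $C_0^{L,\rho}$ and $C_0^{R,\rho}$ in $C^r$-norm, together with exponentially contracting stable foliations that perturb the vertical layer fibers. The reduced slow flow on each branch perturbs regularly, so on $C_\varepsilon^{R,\rho}$ trajectories still drift upward toward a neighborhood of the right fold, and on $C_\varepsilon^{L,\rho}$ they drift downward. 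Any initial condition of the candidate orbit sitting in the basin of an outer branch is carried by a perturbed fast fiber onto the corresponding $C_\varepsilon$, so the fast segments of the candidate lift to the full system without difficulty.

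The main obstacle is the passage through the fold point at the right maximum of $C_0$, where normal hyperbolicity breaks down and Fenichel's theorem fails. The proof of Theorem \ref{th:FHN0} already verifies the generic fold conditions (nondegenerate quadratic tangency and nonzero slow drift transverse to the fold), so I would invoke the standard fold analysis of Krupa--Szmolyan, obtained via the blow-up method of Szmolyan--Wechselberger. This yields a continuation of $C_\varepsilon^{R,\rho}$ around the fold that lands, at a distance $O(\varepsilon^{2/3})$ beyond the fold, within the stable foliation of $C_\varepsilon^{L,\rho}$. Once inside that foliation the trajectory is contracted exponentially fast onto the left slow manifold, so the jump from the right to the left branch in the candidate orbit is realized by an actual trajectory of the perturbed system.

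It remains to follow the trajectory along $C_\varepsilon^{L,\rho}$ down to the equilibrium. The intersection of the $w$-nullcline with $C_0$ is transverse and sits in the interior of the left branch, so the corresponding equilibrium of \eqref{eq:FHNODE} is hyperbolic and persists by the implicit function theorem, remaining $O(\varepsilon)$ close to the singular one. Exchange-lemma-type estimates, or equivalently a direct use of the stable foliation of $C_\varepsilon^{L,\rho}$, show that the trajectory tracks the slow manifold into an arbitrarily small neighborhood of this perturbed equilibrium, where local linear stability closes the argument. Concatenating the fast fiber segment, the two pieces of Fenichel slow manifold, and the fold passage produces a solution curve of \eqref{eq:FHNODE} with $\varepsilon>0$ lying uniformly $O(\varepsilon^{2/3})$ close to the chosen candidate orbit.
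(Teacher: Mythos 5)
Your proposal is correct and follows essentially the same strategy as the paper: Fenichel's theorems on compact normally hyperbolic subsets of the outer branches, combined with the Krupa--Szymolyan blow-up analysis of the generic fold points to continue the slow manifold through the jump, and concatenation along the perturbed fast foliation. The additional detail you supply (persistence of the hyperbolic equilibrium, the $O(\varepsilon^{2/3})$ landing estimate) is consistent with, and slightly more explicit than, the paper's sketch.
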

        	\begin{proof}[Sketch of the Proof.]
            	We have already seen that, excluding both extrema, that the critical manifold is normally hyperbolic. Therefore, choosing an appropriate compact subset of $C_0$, all the conditions of Fenichel's theorems \ref{th:fenichel1}-\ref{th:fenichel3} are satisfied, which means that the the slow flow on the critical manifold and switches from the fast fibers to the slow flow persist under a smooth perturbation. Finally, we need to transform a neighbourhood of the extrema into the normal form of a generic fold point to apply geometric blow-up as introduced in \cite{Krupa01a} (see also Appendix \ref{sec:blowup}). This method provides the persistence of the switches at the maximum and minimum.
            \end{proof}
    
        \subsubsection{Karma: No external current}
        	
        	To understand the Karma model equations \eqref{eq:KarmaODE} we will now perform a similar analysis. We will show that the dynamics of the 
        	Karma model are similar to FitzHugh-Nagumo since again the system is controlled by the critical manifold presenting a similar shape as shown in Figure \ref{fig:Karma}. As before, we shall indicate the main geometric steps of each proof; see also the appendix for more background on the geometric view via geometric singular perturbation theory.\\
        	
        	As before we have two attracting branches separated by a repelling one and exactly one stable equilibrium. An arbitrary orbit will either approach the right branch and then slowly ascend towards the fold point, where it jumps towards the left branch or it approaches directly the left branch where it slowly converges to the stable equilibrium at the origin. In contrast to FHN, the Karma model shows in addition to the stable equilibrium two further unstable fixed points. In general, these points do not affect the overall dynamics, however, the system \eqref{eq:KarmaODE} has not only two additional fixed points, which do not converge to the stable equilibrium but also a slow singular heteroclinic orbit between them. 
        	
        	\begin{center}
        		\includegraphics[width=0.7\textwidth]{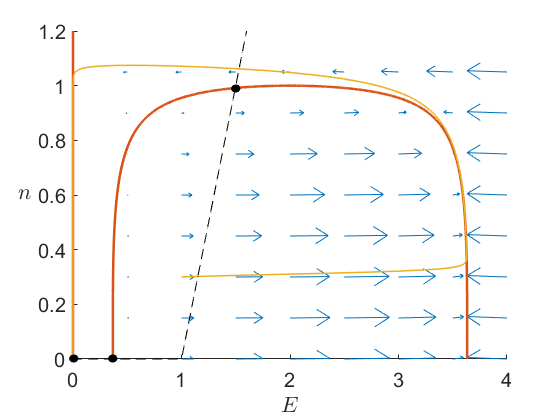}
        		\captionof{figure}{Phase plane of the Karma system for $M=4,$ $\varepsilon=10^{-2}$ and $I=0$. We can see the critical manifold (orange) and the $w$-nullcline (dashed) as well as the global equilibria (black) and a prototypical orbit (yellow) converging to the stable fixed point (0,0).}\label{fig:Karma}
        	\end{center}
        	
        	To formally analyse the dynamics, we want to exploit the different time scales as we did for FitzHugh-Nagumo and therefore consider first the limit $\varepsilon=0$ analysing the layer and the reduced problem separately.
        	
        	\begin{remark}
        	    In contrast to FHN the Karma model is continuous but not smooth due to the rectifier in the second equation. Although some of the analysis techniques used for FitzHugh-Nagumo have to be modified or extended, the existence and uniqueness of solutions is still guaranteed by the Picard-Lindel\"off Theorem.
        	\end{remark}
        	
        	\begin{theorem}\label{th:Karma0}
            	In the singular limit $\varepsilon=0$ of the Karma equations \eqref{eq:KarmaODE} with $I=0$ we have a stable, an unstable and a saddle equilibrium and the critical manifold
            	\begin{equation}\label{eq:C0Karma}
            		C_0=\left\{(E,n): E=0 \text{ or } n=\sqrt[M]{E^*-\frac{1}{2E(1-\delta E)}}\right\}.
            	\end{equation}
            	Every candidate orbit can be constructed as concatenation of fast segments converging to one of the outer branches of $C_0$, which are attracting and slow segments on the critical manifold. An orbit will either eventually converge to the stable equilibrium at the origin or it is one of the two unstable fixed points or a unique heteroclinic orbit between them.
        	\end{theorem}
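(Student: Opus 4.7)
The plan is to adapt the singular-limit analysis of Theorem \ref{th:FHN0} to the Karma system, with extra care for the non-smooth rectifier $\theta$ in the slow equation. The first step is to factor the right-hand side of the fast equation as
\begin{equation*}
E\,\bigl[-1+2(E^{*}-n^{M})\,E(1-\delta E)\bigr]=0,
\end{equation*}
which immediately gives the two components of $C_{0}$ asserted in the statement: the vertical line $E=0$, and the bell-shaped curve $n=(E^{*}-1/(2E(1-\delta E)))^{1/M}$ meeting the $E$-axis at the two positive roots $E_{\pm}$ of $2E^{*}E(1-\delta E)=1$ and reaching its apex at $E=1/(2\delta)$ with $n=1$. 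Evaluating $\partial_{E}f_{E}$ on zeros of $f_{E}$ reduces to $-1$ on $\{E=0\}$ and to $(1-2\delta E)/(1-\delta E)$ on the bell curve; thus the line $E=0$ and the portion of the bell curve with $E>1/(2\delta)$ are the two attracting outer branches, the portion with $E<1/(2\delta)$ is repelling, and the apex is a non-degenerate fold, exactly as in the FHN case.

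Next, I would compute the reduced slow flow on $C_{0}$. Because $\theta(E-1)=\max\{E-1,0\}$, the slow nullcline is the broken curve $\{E\leq 1,\,n=0\}\cup\{E>1,\,n=(E-1)/n_{B}\}$, and intersecting it with $C_{0}$ yields exactly three equilibria: the origin on the $E=0$ branch; the point $(E_{-},0)$ on the bell curve with $E_{-}<1$; and a third point $(E_{*},n_{*})$ with $E_{*}\in(1,1/(2\delta))$ and $n_{*}=(E_{*}-1)/n_{B}$, obtained as the unique crossing of the repelling half of the bell curve with the line $n=(E-1)/n_{B}$. Uniqueness of $E_{*}$ follows from a slope comparison: the bell curve has slope $\mathcal{O}(1/M)$ near $E=1$ while the line has slope $1/n_{B}>1$, and the line overshoots the apex height $1<1/n_{B}$. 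A direct Jacobian computation then gives the classification: $(0,0)$ is a stable node; $(E_{-},0)$ has a diagonal Jacobian with eigenvalues $(1-2\delta E_{-})/(1-\delta E_{-})>0$ and $-\varepsilon<0$, so it is a saddle; and at $(E_{*},n_{*})$ the trace is $(1-2\delta E_{*})/(1-\delta E_{*})-\varepsilon>0$ while the determinant is dominated, in the regime $M\gg 1$, by the positive contribution $2\varepsilon Mn_{*}^{M-1}E_{*}^{2}(1-\delta E_{*})/n_{B}$ supplied through the activated rectifier, giving an unstable node or spiral.

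Finally, the candidate orbits are built by concatenating fast horizontal fibers, along which $n$ is conserved, with slow segments on the attracting branches of $C_{0}$. Generic initial data land on one of the outer branches: on $E=0$ the slow flow $n'=-n$ drives the orbit monotonically to $(0,0)$, while on the right attracting branch of the bell curve one has $n'=(E-1)/n_{B}-n>0$ (since $n\leq 1<1/n_{B}$) together with $dn/dE<0$, so $E$ decreases to the fold at the apex, from which a fast fiber carries the orbit to the $E=0$ branch and then to the origin. The only exceptional singular orbit is the heteroclinic of the statement: on the segment of the repelling branch with $E\in(E_{-},E_{*})$ one checks that the slow flow always decreases $E$ (for $E<1$ because $n>0$ forces $n'<0$; for $E\in(1,E_{*})$ because the bell curve lies above the line $n=(E-1)/n_{B}$ so again $n'<0$), producing a unique slow connection from $(E_{*},n_{*})$ to $(E_{-},0)$. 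The main obstacle, beyond the fold analysis that can be carried over from the FHN case, is the loss of $C^{1}$ regularity of the slow vector field across $E=1$; this forces a piecewise treatment and careful gluing at the switching line, although existence and uniqueness of solutions on either side are preserved thanks to the Picard-Lindel\"off theorem as noted in the preceding remark.
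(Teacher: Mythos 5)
Your proposal is correct and follows essentially the same route as the paper's sketch: factor the layer problem to obtain the two components of $C_0$, classify the branches via the sign of $\partial_E f$ restricted to $C_0$ with the fold at $(2,1)$, intersect the piecewise-linear slow nullcline with $C_0$ to get the three equilibria and their types, and concatenate fast fibres with the slow flow, isolating the slow heteroclinic running from the unstable node down to the saddle. The only (easily repaired) omission is the family of candidate orbits starting on the repelling branch \emph{above} the unstable node, which by your own sign computation (there the bell curve lies below the line $n=(E-1)/n_B$, so $\dot n>0$) slide up to the fold, jump to the $E=0$ branch and likewise reach the origin; otherwise you supply strictly more detail than the paper, e.g.\ the $M\gg 1$ slope-comparison argument for uniqueness of the third equilibrium and the explicit Jacobian classification of each fixed point.
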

        	\begin{proof}[Sketch of the Proof.]
            	\textbf{Layer problem:} Like in FHN we have the one-dimensional fast subsystem
            	\begin{equation}\label{eq:Karmafast}
            	    E'=-E+2(E^*-n^M)(E^2-\delta E^3)
            	\end{equation}
            	where $n$ is a parameter. In this subsystem $E=0$ is always an equilibrium and, depending on $n$ we have either two further equilibrium points for $n<1$, exactly one for $n=1$, or no further equilibria when $n>1$. We can calculate the derivative on these points and get
            	\begin{equation}\label{eq:JacobianKarma}
            	    J(E;n) =-1+2(E^*-n^M)(2E-3\delta E^2),
            	\end{equation}
            	which for $(E,n)\in C_0$ is negative on the outer branches, positive in the middle branch and 0 only at 
            	$$p_F=(2,1).$$ 
            	Therefore the critical manifold is normally hyperbolic everywhere except at $p_F$. It is straightforward to check that $p_F$ satisfies all the conditions of a generic fold point.\\
            	
            	\textbf{Reduced problem:} The slow flow has a piece-wise linear nullcline  which intersects $C_0$ exactly three times as shown in Figure \ref{fig:Karma}: twice on the $E$-axis and once with $E,n>0$. Therefore, we have three global fixed points: a stable equilibrium at the origin, a saddle at the intersection of the unstable branch of $C_0$ and the $E$-axis and an unstable fixed point on the unstable branch. The reduced problem is given by
            	\begin{equation}\label{eq:Karmaslow}
            	    \dot n=\frac{1}{n_B}\theta(E-1)-n~,~~~(E,n)\in C_0
            	\end{equation}
            	this means that the slow flow on the left branch as well as on the middle branch below the unstable equilibrium points downwards while it points upwards on the right branch as well as between the unstable node and the fold point $p_F$.\medskip
            	
            	Combining this information we now want to construct the candidate orbits in the singular limit. Any orbit starting away from the critical manifold will first follow the fast flow converging to one of the attracting branches of $C_0$. The orbits on the right branch follow then the slow flow upwards to $p_F$ where they jump with the fast flow to the left branch. There, all orbits follow the slow flow downwards converging to the global equilibrium $(0,0)$. Orbits starting on the unstable branch of the critical manifold will either converge to $p_F$ and jump to the left branch if they start above the unstable node or they will converge downwards towards the saddle if they start below the unstable fixed point.
        	\end{proof}
        	
        	Finally we show in the following two theorems that the Karma model for $\varepsilon>0$ has an equivalent behaviour as in the singular limit. To prove this we want to apply geometric singular perturbation theory (see Appendix \ref{sec:fenichel} for more details). Nevertheless this theory requires differentiability of the system which we loose when $E=1$. Since this line crosses the repelling branch of the critical manifold below the unstable node we excluded the heteroclinic connection between the unstable fixed points in Theorem \ref{th:Karma1}. This segment will be analyzed separately in Theorem \ref{th:Karma2}.\\
        	
        	\begin{theorem}\label{th:Karma1}
        	    Away from the heteroclinic segment of $C_0$ between the saddle and the unstable node, candidate orbits found in the singular limit $\varepsilon=0$ of equations \eqref{eq:KarmaODE} with $I=0$ can be perturbed to solution curves of the full system with $\varepsilon>0$.
        	\end{theorem}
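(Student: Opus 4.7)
The plan is to follow the proof of Theorem~\ref{th:FHNepsilon} almost verbatim, combining Fenichel's theorems on normally hyperbolic compact subsets of $C_0$ with a geometric blow-up at the fold point $p_F=(2,1)$. The restriction to candidate orbits away from the heteroclinic segment is exactly what is needed to keep all slow behaviour to be perturbed within the smooth regime of the vector field \eqref{eq:KarmaODE}, so that the standard fast-slow theory applies off the shelf.

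The candidate orbits treated in this theorem meet $C_0$ only on the two attracting outer branches: the left branch $\{E=0\}$, where $\theta(E-1)\equiv 0$ in a neighborhood, and the right branch $\{n=(E^*-1/(2E(1-\delta E)))^{1/M},\ E>2\}$, where $\theta(E-1)=E-1$ in a neighborhood. On both of these the system \eqref{eq:KarmaODE} is therefore $C^\infty$, and normal hyperbolicity was already verified via \eqref{eq:JacobianKarma} in Theorem~\ref{th:Karma0}. On compact subsets $K^\ell,K^r$ of these branches excluding a small neighborhood of $p_F$, Fenichel's theorems \ref{th:fenichel1}-\ref{th:fenichel3} provide locally invariant slow manifolds $C_\varepsilon^\ell,C_\varepsilon^r$ that are $O(\varepsilon)$-close to $C_0$, each carrying a smooth $O(\varepsilon)$-perturbation of the reduced slow flow together with a persistent exponentially attracting stable foliation.

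At $p_F=(2,1)$ we are in the smooth region $E>1$, and Theorem~\ref{th:Karma0} has already verified the generic fold conditions. Straightening the slow flow and introducing the local rescaling of Appendix~\ref{sec:blowup} brings a neighborhood of $p_F$ into the Krupa-Szmolyan normal form, whose blow-up analysis yields, as in the FitzHugh-Nagumo case, persistence of the jump: orbits on $C_\varepsilon^r$ are transported past $p_F$ onto a fast fiber that crosses the line $\{E=1\}$ transversally and then contracts exponentially onto $C_\varepsilon^\ell$ via the Fenichel stable foliation of $K^\ell$. Continuity of the right-hand side of \eqref{eq:KarmaODE} and Picard-Lindel\"off guarantee a unique continuation of the orbit across $\{E=1\}$ despite the loss of smoothness in the $n$-equation. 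The stable equilibrium $(0,0)$ lies in the smooth regime $E<1$, and the saddle and unstable node likewise lie away from $\{E=1\}$; each persists as a hyperbolic equilibrium of the full system by the implicit function theorem. Concatenating the perturbed slow and fast segments produces the required solution curves of \eqref{eq:KarmaODE}.

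The principal obstacle is exactly the non-smoothness of \eqref{eq:KarmaODE} on the line $\{E=1\}$, which violates the differentiability hypotheses underlying both Fenichel's theorems and the Krupa-Szmolyan normal-form calculation. Excluding the heteroclinic segment is precisely what removes this issue from the slow dynamics, since that segment is the only part of the candidate picture on which the slow flow itself crosses $\{E=1\}$. The only remaining interaction with $\{E=1\}$ is a transverse fast jump, where the loss of smoothness in the $n$-equation reduces regularity but leaves the geometric fast-slow argument intact.
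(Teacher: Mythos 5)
Your proposal is correct and follows essentially the same route as the paper: restrict to the smooth pieces of the vector field on either side of $\{E=1\}$ (the paper formalizes this by extending each smooth half to all of $\mathbb{R}^2$, which is equivalent to your local observation that $\theta(E-1)$ reduces to $0$ or $E-1$ near the relevant branches), apply Fenichel's theorems on compact normally hyperbolic subsets, extend the fast fibres across $\{E=1\}$ by continuity, and handle $p_F$ with the Krupa--Szmolyan blow-up. The only slight imprecision is your claim that the covered candidate orbits meet $C_0$ only on the attracting outer branches: the segment of the repelling middle branch above the unstable node is also included in the theorem, but since it lies entirely in $E>1$ your argument covers it without modification, as the paper does by perturbing the attracting and repelling branches separately.
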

        	\begin{proof}[Sketch of the Proof.]  
        	    For this proof we first need to divide our phase space along the line $E=1$ to be able to guarantee smoothness, therefore we will analyse the left and right parts of the critical manifold separately. Furthermore, Fenichel's theorems require smooth vector fields defined on $\mathbb{R}^2$. In order to satisfy this condition we extend the systems on each side to the entire real plane so that we will be working with either
        	    \begin{equation}
                	n' = -\varepsilon n ~~~~\text{or}~~~~ n'=\varepsilon\left(\frac{1}{n_B}(E-1)-n\right)
                \end{equation}
                and the unchanged fast equation \eqref{eq:Karmafast} defined in both cases for $(E,n)\in\mathbb{R}^2$. Since all the results of Fenichel's Theory are local around the subset of the critical manifold we are focusing on, these extensions do not change the results.\\
            	Away from the fold point $p_F$ we determined above that the critical manifold is normally hyperbolic so, taking any compact subset of the left branch, we are able to apply Fenichel's first Theorem \ref{th:fenichel1} to perturb the attracting and repelling branches separately to locally invariant manifolds of the full system. Furthermore, by Fenichel's second and third Theorem \ref{th:fenichel2} and \ref{th:fenichel3} the switching between the fast and the slow flow is also preserved for $\varepsilon>0$. Returning now to our original system \eqref{eq:KarmaODE} we can extend the fast fibres over $E=1$ using the continuity of the flow. Last it remains to prove that the switching at $p_F$ is preserved as well. We know that this point is a generic fold so we can do a coordinate transformation to normal form. Krupa and Szymolyan presented in detail in \cite{Krupa01a} the analysis of the normal form of a generic fold by performing a geometric blow-up (see also Appendix \ref{sec:blowup}) which applied to our model concludes the proof. 
        	\end{proof}
        	    
        	\begin{theorem}\label{th:Karma2}
        	    The heteroclinic segment of the critical manifold in the Karma model \eqref{eq:KarmaODE} with $I=0$ can be perturbed to a heteroclinic orbit between the equilibria for $\varepsilon>0$.
        	\end{theorem}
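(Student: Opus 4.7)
The plan is to build the perturbed heteroclinic as the backward-time continuation of the local stable manifold of the saddle, by combining Fenichel's theorem on each of the two smooth half-planes $\{E<1\}$ and $\{E>1\}$ and gluing across $\{E=1\}$ by Lipschitz continuity. First, I would apply Fenichel's Theorem \ref{th:fenichel1} separately on each side, after extending the Karma vector field smoothly to $\mathbb{R}^2$ as in the proof of Theorem \ref{th:Karma1}. The middle branch of $C_0$ is normally hyperbolic and repelling away from $p_F$, and it contains both the saddle (at $n=0$, $E<1$) and the unstable node (at $E>1$), so this yields locally invariant repelling slow manifolds $M_\varepsilon^{\ell}\subset\{E<1\}$ and $M_\varepsilon^{r}\subset\{E>1\}$, both $O(\varepsilon)$-close to the middle branch on their respective domains.

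Next I would identify the local stable manifold of the saddle with a segment of $M_\varepsilon^{\ell}$. For $\varepsilon>0$ small, the saddle persists as a hyperbolic fixed point with a one-dimensional stable manifold $W^s_\varepsilon$. Since $M_\varepsilon^{\ell}$ is one-dimensional, invariant, contains the saddle, and is tangent there to the stable (slow) eigendirection---the slow flow on the middle branch approaches the saddle from above---uniqueness in the stable manifold theorem forces the local stable manifold of the saddle to coincide with $M_\varepsilon^{\ell}$ near the saddle. Tracing $W^s_\varepsilon$ backward in time along $M_\varepsilon^{\ell}$, the orbit ascends until it meets the line $\{E=1\}$ at some point $(1,n_\ast)$ at distance $O(\varepsilon)$ from the middle branch.

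Across $\{E=1\}$ the vector field is Lipschitz (only $\theta(E-1)$ creates a kink, and $\theta$ itself is Lipschitz), so Picard--Lindel\"of uniquely extends the backward orbit into $\{E>1\}$. There, $M_\varepsilon^{r}$ is repelling in forward time, hence attracting in backward time, so the continued orbit is exponentially contracted onto $M_\varepsilon^{r}$. On $M_\varepsilon^{r}$ the reduced flow, viewed in backward time, has the unstable node as a sink, since the reduced forward flow leaves the node upward toward $p_F$ and downward toward the saddle. The orbit therefore converges to the unstable node as $t\to-\infty$, so the union of $W^s_\varepsilon$ with its continuation in $\{E>1\}$ is the desired heteroclinic orbit.

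The main obstacle is the non-smoothness at $\{E=1\}$, which prevents a single Fenichel manifold from covering the entire heteroclinic segment; the two-sided construction plus Lipschitz gluing is meant to replace the usual smooth argument. A secondary subtlety is verifying that the orbit enters $\{E>1\}$ close enough to $M_\varepsilon^{r}$ to be captured by its backward-attracting dynamics; this should follow because the crossing point is $O(\varepsilon)$-close to $C_0$ and therefore lies inside the exponential basin of attraction of $M_\varepsilon^{r}$ for the time-reversed flow.
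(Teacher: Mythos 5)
Your argument is essentially correct, but it takes a genuinely different route from the paper. The paper also realises the perturbed heteroclinic as the backward-time continuation of the saddle's stable manifold, but it proves convergence to the unstable node purely topologically: the two branches of the saddle's \emph{unstable} manifold both fall to the origin, so together they bound a compact invariant planar region containing no periodic orbits, and the Poincar\'e--Bendixson theorem applied in backward time forces the $\alpha$-limit of the stable-manifold orbit to be the unstable node. That argument needs only continuity and uniqueness of the planar flow, so the loss of smoothness at $\{E=1\}$ --- the very obstruction that prevents Theorem \ref{th:Karma1} from covering this segment --- costs nothing; on the other hand it is intrinsically two-dimensional and yields bare existence. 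Your route (Fenichel on each half-plane, Lipschitz gluing across $\{E=1\}$, backward-time exponential attraction to $M_\varepsilon^{r}$, backward asymptotic stability of the node) buys more: it shows the connecting orbit is $\mathcal{O}(\varepsilon)$-close to the singular segment and would survive in settings where Poincar\'e--Bendixson is unavailable. Two points deserve a little more care than you give them. First, the identification of $W^s_{loc}$ of the saddle with $M_\varepsilon^{\ell}$ is legitimate, but the clean way to say it is that any Fenichel representative through the saddle is $C^1$-close to the middle branch, hence transverse to the fast eigendirection, hence tangent to the slow (stable) one, and then uniqueness of the local stable manifold applies. Second, the backward orbit need not cross $\{E=1\}$ transversally exactly once, since the fast component of the field is only $\mathcal{O}(\varepsilon)$ near $C_0$ and may change sign; to preserve your left-then-right decomposition you should add that the orbit stays trapped in an $\mathcal{O}(\varepsilon)$-neighbourhood of the middle branch while the backward slow drift (with $\dot n$ bounded away from zero there) carries it in finite slow time to a region where $E>1$ strictly, after which your capture argument applies verbatim.
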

        	\begin{proof}[Sketch of the Proof.]
        	    Following the proof of Theorem \ref{th:Karma1} we are able to perturb any compact subset of the heteroclinic segment without the point at $E=1$ but we cannot directly guarantee that the left and right subsets connect. To demonstrate the existence of the expected heteroclinic orbit connecting the unstable equilibrium to the saddle we need to look directly at the system with $\varepsilon>0$.\\
        	    Since both unstable manifolds of the saddle converge by the analysis above to the origin we are able to define an invariant set delimited by them as shown in Figure \ref{fig:heteroclinic(E,n)}. Furthermore, from the previous analysis we know that every orbit starting away from the heteroclinic segment will eventually converge to the origin so we follow that there is no periodic orbit and therefore no limit cycle in this set. Now we are able to apply the Poincar\'e-Bendixson Theorem in the limit $t\to-\infty$. Since the origin is unstable in backward time and there are no limit cycles the theorem shows that in fact the now unstable manifold of the saddle needs to converge to the now stable equilibrium proving the existence of the expected heteroclinic orbit for positive $\varepsilon$.
    	    \end{proof}
    	    \begin{center}
    	        \includegraphics[width=0.7\textwidth]{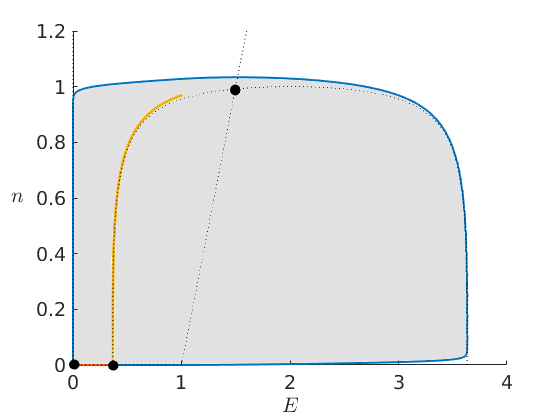}
    	        \captionof{figure}{Phase plane $(E,n)$ for $\varepsilon>0$. In grey we have the invariant set enclosed by the unstable manifolds of the saddle (blue and orange). Furthermore we have its stable manifold for $E<1$ (yellow), the nullclines (dotted) and the equilibria of the system (black)}\label{fig:heteroclinic(E,n)}
    	    \end{center}
        
        In summary, we have shown that although the general structure of the Karma ODE model is similar to the FHN ODE model, there are subtle mathematical differences, particularly in the case of using the standard variants in the literature.
        	
        \subsubsection{Karma: External current $I>0$} \label{sec:KarmaI}
        Next we focus on the case where the external current $I>0$. In the FitzHugh-Nagumo model it is well known that an external current shifts the critical manifold upwards as shown in Figure \ref{fig:FHNI}. Without changing anything else, the dynamics switch from a stable resting state to an oscillatory behaviour to a stable depolarized state as the input $I$ increases. To mathematically show this different behaviours we can perform an analogous, yet more complicated, analysis as presented in Section \ref{sec:FHN}; see also~\cite{Krupa01b}.\\
        	\includegraphics[width=0.5\textwidth]{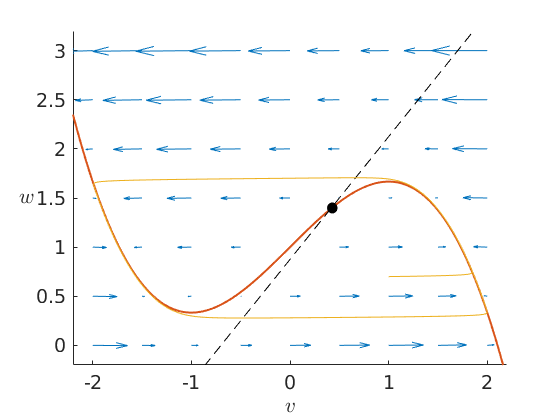}
        	\includegraphics[width=0.5\textwidth]{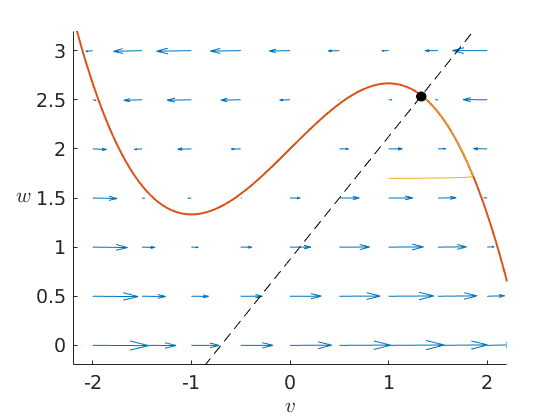}
        	\captionof{figure}{Phase plane of the FitzHugh-Nagumo system for $I=1$ (left) and $I=2$ (right). We can see the critical manifold (orange) and the $w$-nullcline (dashed) as well as the global equilibrium (black) and a prototypical orbit (yellow) oscillating for $I=1$ and converging to the stable equilibrium for $I=2$.\\}\label{fig:FHNI}
        	
        	In contrast to that, adding an external current to the Karma model results in a significant change in the shape of the critical manifold as shown in Figure \ref{fig:KarmaI}. While we have a regime of $I$ where the critical manifold is ``S''-shaped comparable to FHN giving rise to similar relaxation oscillations, when $I$ is big, the manifold flattens out in such a way that the curve is monotonous. In particular this means that the model does not allow any relaxation oscillations or pulses for a high input $I$. Furthermore, in the Karma model the stable resting state disappears when it collides with the saddle in a fold bifurcation whereas in FHN only the stability of the already unique equilibrium changes. Lastly, the change of stability of the unstable node is for the most part independent of the shape of $C_0$. This means that, depending on the model parameters, we can observe bistability as well as a relaxation pulse with a stable depolarized state similar to FHN in addition to the dynamics we have already described.\\
        	\includegraphics[width=0.5\textwidth]{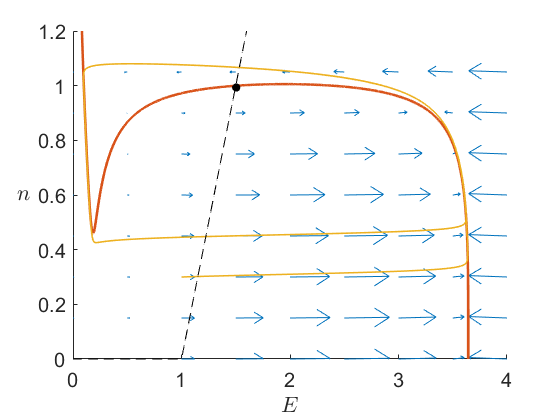}
        	\includegraphics[width=0.5\textwidth]{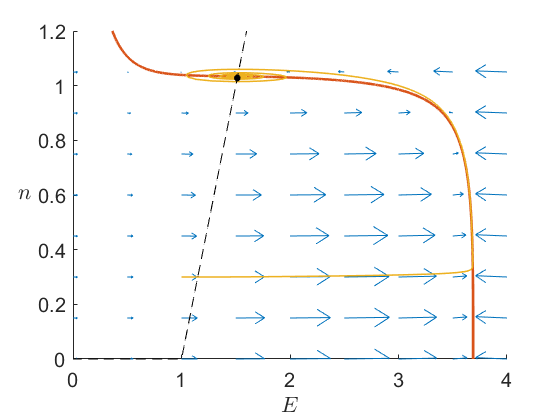}
        	\captionof{figure}{Phase plane of the Karma system for $M=4$, $\varepsilon=10^{-2}$ and $I=0.1$ (left) and $I=0.5$ (right). We can see the critical manifold (orange) and the $w$-nullcline (dashed) as well as the global equilibria (black) and a prototypical orbit (yellow) oscillating for $I=0.1$ and converging to the unique stable equilibrium for $I=0.5$.\\}\label{fig:KarmaI}
        	
        	The next theorem formalises all the different dynamic regimes described above.
        	
        	\begin{theorem}\label{th:KarmaI}
            	In the singular limit $\varepsilon=0$ of the Karma equations \eqref{eq:KarmaODE} with $I>0$ we have the critical manifold
            	\begin{equation}\label{eq:C0KarmaI}
            	    C_0=\left\{(E,n): n=\sqrt[M]{E^*-\frac{E-I}{2E^2(1-\delta E)}}\right\}.
            	\end{equation}
            	Every candidate orbit can be constructed as concatenation of fast segments converging to one of the outer branches of $C_0$ which are attracting and slow segments on the critical manifold but we need to differentiate multiple parameter regimes for $I$ giving rise to different overall dynamics.\\
            	The first threshold is given by $I=I_2$ where the equilibrium with $n>0$ changes from unstable to stable as $I$ increases. Furthermore we have
            	\begin{itemize}
            	    \item $I<I_0\approx 0.08718$: The equations have three equilibrium points with a stable node and a saddle on the $E$-axis. If $I<I_2$ the behaviour is equivalent to the case $I=0$. Otherwise, the system is bistable.
            	    \item $I_0<I<I_1=\frac{4}{9}$: The equations have a unique equilibrium point. If $I<I_2$ the equilibrium is unstable and the system has a stable relaxation oscillation. Otherwise, the equilibrium is globally stable although there are relaxation pulses.
            	    \item $I>I_1(>I_2)$: The middle branch of $C_0$ disappears such that the critical manifold is attracting everywhere and the unique equilibrium is globally stable.
            	\end{itemize}
        	\end{theorem}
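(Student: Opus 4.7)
The plan is to mirror the singular analysis of Theorem \ref{th:Karma0} while tracking how the geometry of the critical manifold and the equilibrium set deform as $I$ increases. First I would derive \eqref{eq:C0KarmaI} by solving $-E + 2(E^* - n^M)(E^2 - \delta E^3) + I = 0$ for $n^M$. The fold points of $C_0$ satisfy $\partial_E f_E = -1 + 2(E^* - n^M)(2E - 3\delta E^2) = 0$ together with the $C_0$ equation; the cusp condition $\partial_E^2 f_E = 0$ forces $E = 1/(3\delta) = 4/3$ (with $\delta = 0.25$), and substituting back into the fold equation and then into $f_E + I = 0$ yields the explicit value $I_1 = 4/9$ at which the two folds coalesce. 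For $I < I_1$ the manifold retains its ``S''-shape with two attracting outer branches and a repelling middle branch, while for $I > I_1$ no folds exist and $C_0$ is monotone, hence attracting everywhere as a critical manifold of the layer problem.

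\textbf{Equilibria and thresholds.} Equilibria are intersections of $C_0$ with the piecewise-linear $n$-nullcline $n = n_B^{-1}\theta(E-1)$. On $E \leq 1$ the nullcline is the $E$-axis, and $E' = 0$ reduces to the cubic $-E + 2E^*(E^2 - \delta E^3) + I = 0$ in $E$. Its discriminant is itself a quadratic in $I$ whose relevant positive root is precisely $I_0 \approx 0.08718$; for $I < I_0$ this cubic has three real roots, producing a stable node on the left attracting branch and a saddle on the middle branch which collide in a saddle-node bifurcation at $I = I_0$ and disappear thereafter. For $E > 1$ the nullcline is the line $n = (E-1)/n_B$, whose unique intersection with $C_0$ provides an equilibrium whose stability is governed by the sign of $\partial_E f_E$ at the intersection, i.e.\ by which branch of $C_0$ it lies on. The threshold $I_2$ of the theorem is then the value of $I$ at which this equilibrium crosses the upper fold point $p_F$, passing from the repelling middle branch onto the attracting right branch.

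\textbf{Candidate orbits.} With the three thresholds located, the reduced flow $\dot n = n_B^{-1}\theta(E-1) - n$ is downward on $C_0$ whenever $E \leq 1$ and upward above the line $n = (E-1)/n_B$ for $E > 1$. Combining the layer flow (trajectories fall onto the attracting outer branches) with this slow flow and with fast jumps enforced at the fold points, candidate orbits are built as concatenations exactly as in Theorem \ref{th:Karma0}. Sorting by the thresholds then produces the listed sub-cases: for $I < I_0$ the topology matches Theorem \ref{th:Karma0} when $I < I_2$ and is bistable when $I > I_2$; for $I_0 < I < I_1$ with $I < I_2$ the only surviving equilibrium sits on the middle branch, so the fold-to-fold concatenation closes into a singular relaxation cycle, whereas for $I > I_2$ the equilibrium lies on the right branch and is globally attracting even though non-equilibrium initial data still traverse a fold and produce a transient relaxation pulse; finally for $I > I_1$ monotonicity of $C_0$ forces every trajectory onto it and then to the unique equilibrium.

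\textbf{Main obstacle.} Once $I_0, I_1, I_2$ have been pinned down algebraically, the remainder is bookkeeping of sign conditions for the slow flow on each branch and on each side of $p_F$. The genuine subtlety is the transitional regime $I \approx I_2$, where the upper equilibrium coincides with $p_F$ and normal hyperbolicity fails; at the singular level it suffices to certify the topology of the concatenation on either side of $I_2$, but a complete treatment for $\varepsilon > 0$ would require a canard/singular-Hopf analysis in the spirit of \cite{Krupa01b}. A secondary care-point is that the $n$-nullcline has a corner at $E = 1$, so whenever candidate orbits are constructed across this line one must rely on the continuity argument used in Theorem \ref{th:Karma2} rather than invoking smooth GSPT directly.
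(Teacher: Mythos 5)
Your proposal follows essentially the same route as the paper: derive $C_0$ by solving for $n^M$, locate the fold curves from $\partial_E f_E=0$ on $C_0$, identify $I_1$ as the coalescence (cusp) of the two folds, identify $I_0$ with the saddle--node of the two $E$-axis equilibria, and then assemble candidate orbits branch by branch. Your computations of the thresholds are equivalent to the paper's, just packaged differently: the paper solves the quadratic $2E^2-(\tfrac{1}{\delta}+3I)E+\tfrac{2}{\delta}I=0$ explicitly to get the fold curves $E_\pm(I)$, obtains $I_1=\tfrac49$ from their coalescence, and obtains $I_0$ as the value at which $E_-(I)$ first satisfies $E^*-\tfrac{E_--I}{2E_-^2(1-\delta E_-)}\geq 0$, i.e.\ enters the physical part of $C_0$; your cusp condition $\partial_E^2 f_E=0$ at $E=1/(3\delta)$ and your discriminant-of-the-restricted-cubic computation give the same numbers. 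Two small corrections. First, the slow flow for $E>1$ is upward \emph{below} the line $n=(E-1)/n_B$ (where $\tfrac{1}{n_B}(E-1)-n>0$), not above it; as written your sign would reverse the direction on the right branch and break the concatenation, though your subsequent orbit descriptions show you intended the correct orientation. Second, you define $I_2$ only as the crossing of the equilibrium through the upper fold $E_+(I)$; the paper points out that, depending on $n_B$ and $M$, the $n$-nullcline may instead cross $E_-(I)$, in which case the equilibrium stabilises by landing on the \emph{left} attracting branch and the relaxation-pulse picture is mirrored. Your closing remarks on the folded singularity at $I=I_2$ and on the loss of smoothness at $E=1$ match the paper's own caveats.
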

        	\begin{proof}[Sketch of the Proof.]
            	Analogously to the previous section, we are now going to study separately the fast and slow subsystems in the singular limit in order to proof Theorem \ref{th:KarmaI}.\medskip
            	
            	\textbf{Layer problem:} The layer problem is defined by the equation
                \begin{equation}
                    E'=-E+2(E^*-n^M)(E^2-\delta E^3)+I
                \end{equation}
                for $n$ fixed. We can see directly that the derivative of the right-hand-side is still given by \eqref{eq:JacobianKarma}. Since by definition any equilibrium is contained in $C_0$ we can plug in the equality 
                \begin{equation*}
                    (E^*-n^M)=\frac{E-I}{2E^2(1-\delta E)}
                \end{equation*}
                and rewrite that way the Jacobian depending on the external current $I$ instead of $n$ as follows
                \begin{equation}
                    J(E;I)=-1+\frac{(E-I)(2-3\delta E)}{E(1-\delta E)}.
                \end{equation}
                To isolate any non-hyperbolic equilibrium of the fast system we set the $J(E;I)=0$ and obtain after simplifying 
                \begin{equation}
                    0=2E^2-(\frac{1}{\delta}+3I)E+\frac{2}{\delta}I.
                \end{equation}
                Solving the quadratic equation for $E$ we find 2 curves of non-hyperbolic equilibria given by
                \begin{equation}
                    E_\pm(I)=\frac{(4+3I)\pm\sqrt{(4+3I)^2-64I}}{4}
                \end{equation}
                which connect and disappear for $I\geq I_1:= \frac{4}{9}$. It is important to check for which values of $I$ the curves $E_\pm(I)$ are in fact on the critical manifold, more precisely, whether 
                \begin{equation}\label{eq:ineqKarmaI}
                    E^*-\frac{E_\pm-I}{2E_\pm^2(1-\delta E_\pm)}\geq0.
                \end{equation}
                The curve $E_+(I)$ satisfies this inequality for all $I\in [0,I_1]$ but for the curve $E_-(I)$ the inequality \eqref{eq:ineqKarmaI} is only satisfied when 
                $$I\in[I_0,I_1]$$
                with $I_0 \approx 0.08718$.\medskip
            	
        	    Having isolated the non-hyperbolic equilibria we check that, similar to the previous section, when $I<I_1$ we have a division of the critical manifold into three separate branches where the Jacobian is negative on the outer ones and positive in the middle branch. When $I>I_1$ the Jacobian stays negative along the whole critical manifold.\\
            	
            	\textbf{Reduced problem:} The slow subsystem is still defined by \eqref{eq:Karmaslow} but now we have a different definition of $C_0$. The most important change lies in the fact that the $n$-nullcline will, due to continuity, cross the curve $E_\pm(I)$ in the $(E,n)$-plane for some $I_2\leq I_1$ dependent on the system parameters $n_B$ and $M$ as we increase $I$. By crossing this curve the global equilibrium of the system (with $n>0$) changes its stability and becomes stable. Furthermore we have already seen that the two equilibria at the $E$-axis collide and disappear for $I=I_0$ so that for $I>I_0$ we only have 1 equilibrium of the slow flow.\\
                
                \begin{remark} 
                    Looking at the full system we identify $I=I_0$ as the bifurcation parameter where the system undergoes a saddle-node bifurcation when the 2 equilibria on the $E$-axis collide and disappear giving rise to the curve $E_-(I)$. For the corresponding values of $I$ we can check again that the conditions for a generic fold point are satisfied on both curves $E_\pm(I)$ everywhere except for the point $I=I_1$ and the singularity at $E_+(I_2)$ or $E_-(I_2)$. At the first one the system undergoes a cusp bifurcation where the two fold points annihilate each other. We will come back to this bifurcation later on in more detail. Last, the intersection between the $n$-nullcline and $E_\pm(I)$ at $I=I_2$ satisfies the conditions of a nondegenerate fold but the slow flow is 0. We conclude that at this point we have a fold singularity.\\
                \end{remark}
            	
            	Finally we construct the candidate orbits in the singular limit in the different parameter regimes.
            	\begin{itemize}
            	    \item $I<I_0$: If $I<I_2$ the orbits are equivalent to without incoming current.\\
            	    If $I>I_2$ then the fast flow will converge to one of the attracting branches of $C_0$ but while every orbit on the left branch still converges to the origin, contrary to the previous case, all orbits on the right branch will stay on that branch converging to the second stable equilibrium. The slow flow on the repelling branch converges like before to the saddle point.
            	    \item $I_0<I<I_1$: First every orbit follows the fast fibres to one of the attracting branches of the critical manifold.\\
            	    If $I<I_2$ the slow flow leads then to the next fold point where we can again use a fast fibre to jump to the other attracting branch forming a cycle. The flow on the repelling branch will converge away from the equilibrium to the folds following from there the cycle.\\
            	    If $I>I_2$ and assuming the $n$-nullcline crosses $E_+(I)$ then the flow on the left and middle branch will still converge to the minimum jumping to the right branch. There all orbits converge to the equilibrium. The case where the $n$-nullcline intersects $E_-(I)$ is equivalent subject to interchange left and right and taking the maximum instead of minimum.
            	    \item $I_1<I$: The entire critical manifold is attracting so every orbit flows fast to it and then converges to the unique equilibrium.
            	\end{itemize}
            \end{proof}
            
            \begin{remark}
                To go briefly into the biophysical implications of the above observations we note that all $I_0,I_1,I_2$ are important thresholds affecting differently the behaviour of the cell. Whenever we have a background stimulation $I>I_2$, any cell which depolarises over this threshold would not be able to repolarise anymore and will therefore cease to ``fire'' further signals. On the other hand, a background stimulation $I_0<I<I_1$ and $I<I_2$ results in a self-excitatory system which will ``fire'' regularly. Finally, when the background stimulation is higher than $I_1$ the cell will automatically depolarise so that any future signal is blocked.
            \end{remark}
        	
        	\begin{theorem}
        	    Whenever $E_-(I)\neq1$, candidate orbits found in the singular limit $\varepsilon=0$ of equations \eqref{eq:KarmaODE} with $I>0$ away form the bifurcation points $I_0$ and $I_2$ can be perturbed to solution curves of the full system with $\varepsilon>0$.
        	\end{theorem}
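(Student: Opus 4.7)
The plan is to mirror the architecture of the proof of Theorem \ref{th:Karma1} while tracking the $I$-dependent geometry established in Theorem \ref{th:KarmaI}. First I would split the phase space along the non-smooth line $E=1$ and, on each half, extend the system to all of $\mathbb{R}^2$ by replacing the rectifier with its smooth prolongation, so that on $\{E<1\}$ one works with $n'=-\varepsilon n$ and on $\{E\geq 1\}$ with $n'=\varepsilon(n_B^{-1}(E-1)-n)$, together with the unchanged fast equation. The hypothesis $E_-(I)\neq 1$ enters exactly here: it guarantees that the (left) fold does not sit on the seam $E=1$, so a small open neighbourhood of each non-hyperbolic point of $C_0$ lies inside one of the two smooth halves, which is what the blow-up machinery requires.

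Next, I would fix $I$ in each of the three parameter regimes of Theorem \ref{th:KarmaI} and apply Fenichel's theorems \ref{th:fenichel1}--\ref{th:fenichel3} on any compact subset of $C_0$ bounded away from $E_\pm(I)$. This yields $O(\varepsilon)$-close locally invariant slow manifolds on the attracting and repelling branches and preserves the transverse switching between fast fibres and the slow flow. At the generic fold points on $E_\pm(I)$ (which by the Remark after Theorem \ref{th:KarmaI} are nondegenerate away from the cusp $I=I_1$, the saddle-node at $I=I_0$, and the folded singularity at $I=I_2$) I would apply the normal-form reduction and geometric blow-up of \cite{Krupa01a} to obtain persistence of each jump. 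Concatenating the perturbed slow segments with the perturbed jumps then reproduces every candidate orbit in each regime: for $I>I_1$ the whole manifold is attracting and the argument is immediate; for $I_0<I<I_1$ with $I<I_2$ one closes a singular relaxation cycle as in the FitzHugh--Nagumo proof of Theorem \ref{th:FHNepsilon}; for $I<I_0$ or $I>I_2$ one instead obtains perturbed heteroclinic and stable/unstable trajectories, and the two smooth half-flows can be re-glued transversally across $E=1$ by the continuity argument already used in the proof of Theorem \ref{th:Karma1}. If a bistable regime retains a heteroclinic segment between the saddle and the unstable node, the Poincar\'e--Bendixson argument of Theorem \ref{th:Karma2} can be recycled verbatim.

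The main obstacle is precisely what the hypotheses remove: at $I=I_0$ the two equilibria on the $E$-axis coalesce in a saddle-node bifurcation on $C_0$, so Fenichel fails at the degenerate equilibrium and a genuine unfolding argument (not a mere perturbation of candidates) would be required; at $I=I_2$ the $n$-nullcline intersects $E_\pm(I)$ at a fold point where the slow flow vanishes, producing a folded singularity whose analysis lies outside the scope of the generic-fold blow-up of \cite{Krupa01a}. Both cases would demand canard-type techniques and were therefore excluded from the statement. A secondary, more technical point is to ensure that the nondegeneracy constants of the folds and the Fenichel spectral gaps remain uniform on compact $I$-intervals between the bifurcation values, so that a single threshold $\varepsilon_0>0$ works throughout each regime; this however reduces to continuous dependence of the algebraic conditions on $I$ and poses no conceptual difficulty.
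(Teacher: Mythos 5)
Your overall architecture matches the paper's: split the phase space along the non-smooth seam $E=1$, smoothly extend each half, apply Fenichel's theorems \ref{th:fenichel1}--\ref{th:fenichel3} on compact pieces away from $E_\pm(I)$, desingularize the generic folds by the blow-up of \cite{Krupa01a}, and treat the regimes of Theorem \ref{th:KarmaI} case by case. You also correctly identify why $E_-(I)\neq 1$ and the exclusion of $I_0$ and $I_2$ are needed.

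There is, however, one step where your argument as stated would fail, and it is precisely the step the paper's proof spends most of its effort on. You claim that the two half-flows can be ``re-glued transversally across $E=1$ by the continuity argument already used in the proof of Theorem \ref{th:Karma1}.'' That continuity argument only extends \emph{fast fibres} over the line $E=1$, i.e.\ it works away from the critical manifold. It says nothing about matching the two slow manifolds obtained on either side of $E=1$ when the seam crosses a branch of $C_0$ itself --- which is exactly why Theorem \ref{th:Karma1} excluded the heteroclinic segment and Theorem \ref{th:Karma2} exists. The paper handles this crossing with three distinct mechanisms depending on the regime: (i) when $E_-(I)>1$ the seam meets the \emph{attracting} left branch, and one uses the exponential contraction together with the non-uniqueness of the slow manifold to declare the continuation of the left piece to be ``the'' slow manifold for $E\geq 1$; (ii) in the oscillatory regime $I_0<I<I_1$, $I<I_2$, one uses Poincar\'e--Bendixson inside the region bounded by the limit cycle to connect the two halves of the \emph{repelling} slow manifold (forward convergence to the cycle for $E>1$, backward convergence to the equilibrium for $E<1$); (iii) for $I>I_2$ one reverses time so the middle branch becomes attracting and repeats mechanism (i), obtaining the separatrix between basins (for $I<I_0$) or between direct convergence and relaxation-pulse orbits (for $I>I_0$). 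Your proposal only gestures at the Poincar\'e--Bendixson recycling for a residual heteroclinic segment in the bistable case; to be complete you would need to spell out the matching of slow manifolds across $E=1$ on each branch type in each regime, since plain continuity does not suffice on a repelling branch.
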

        	\begin{proof}[Sketch of the Proof.]
        	    Analogously to Theorem \ref{th:Karma1} we find that also for $I>0$ away from the intersection between $E=1$ and the critical manifold we can perturb every orbit as expected for $\varepsilon>0$. In the case when $E_-(I)>1$, in particular when $I>I_1$, this point lies in the left branch of $C_0$. After continuing the slow manifold obtained for $E<1$ over this line we can use the attracting properties of the slow manifold for $E>1$ to follow that both manifolds will approach each other. Recalling that the slow manifold is not unique we can directly choose the continuation of the left part to also be our representative slow manifold for $E\geq1$. To finish the proof we need to separate the different parameter regimes when $E_-(I)<1$. If we first take $I<I_2$ we have the following cases.
        	    \begin{itemize}
        	        \item When $I<I_0$ the system is equivalent to the case with $I=0$ and the proof of Theorem \ref{th:Karma2} can still be applied to derived the heteroclinic orbit between the unstable node and the saddle point.
        	        \item When $I_0>I>I_1$ we have already derived a stable limit cycle with the unstable fixed point the only orbit not converging to it. In particular we know there are no further periodic orbits inside the limit cycle. This means that, defining an invariant set delimited by the cycle, we can use the Poincaré-Bendixson Theorem to show that the segment of repelling slow manifold with $E>1$ will converge to the limit cycle for $t\to\infty$ as well as that the segment with $E<1$ will converge to the equilibrium for $t\to-\infty$.
        	    \end{itemize}
        	    Finally we look at the system with $I>I_2$. By reversing time the repelling branch of the critical manifold becomes attracting and so we can use the same technique applied above and choose the continuation of the left segment of the middle branch as slow manifold. Following the analysis given by Fenichel's theorems and geometric blowup we follow that the middle branch of the slow manifold flows over the fold point diverging in backward time. In the case where $I<I_0$ this manifold defines a separatrix dividing the phase space into the basins of attraction of the two stable equilibria. In the case where $I>I_0$ this slow manifold separates the orbits converging directly to the stable equilibria and the orbits which perform a relaxation pulse over the left or right branch of $C_0$ and one of the fold points before converging.
        	\end{proof}
        	    
    	   The limit case with $E_-(I)=1$ cannot be analysed with the methods used above since the geometric blowup also requires higher regularity. By continuity we would expect that we can still perturb the candidate orbits for $\varepsilon>0$ but this still has to be proven rigorously. Furthermore, when $I=I_0$ or $I=I_2$ the system has folded singularities. It is known that in small neighbourhoods around this points we can find canards and so-called canard explosions. For more details about this solutions see \cite{Dumortier96, Krupa01a, Krupa01b, Kuehn15}.
        	
        	\begin{remark}
        	    All the existence results obtained by Fenichel's Theory require $\varepsilon$ to be ``small enough''. In applications we need to check for every case independently what ``small enough'' means specifically.\\
        	    
        	    By looking at simulations we see that when $I<I_2$ the orbits behave as expected even for relatively large $\varepsilon\approx10^{-1}$. Nevertheless, when the equilibrium changes stability for $I=I_2$, the orbits oscillate around the equilibrium instead of converging through a slow manifold as expected from Fenichel's Theory even for very small $\varepsilon\approx10^{-4}$. This shows that even knowing that there exists an $\varepsilon$ for which this theory is applicable, for some values it is not the case. To understand what actually happens at this point with reasonable $\varepsilon$ we have to look at the eigenvalues of the fast subsystem as well as of the full system.\\
        	    
        	    Although the Jacobian $J$ of the fast subsystem is strictly smaller than 0 the critical manifold stays very close to non-hyperbolicity and so the absolute value of $J$ is very small. If we calculate the eigenvalues of the full system at the unique equilibrium we have:
            	\begin{equation*}
            	    \lambda_\pm = \frac{J-\varepsilon}{2} \pm \frac{1}{2}\sqrt{(J-\varepsilon)^2-4\varepsilon\left(-J+\frac{2}{n_B}Mn^{M-1}(E-1)(E^2-\frac{1}{4}E^3)\right)}
            	\end{equation*}
            	It holds that $J<0$ and the equilibrium is away from $E=1$ and $E=4$ therefore we see that the parenthesis in the second term of the discriminant is always positive and of order 1 w.r.t. $\varepsilon$ so the whole summand is in $\mathcal{O}(\varepsilon)$. Nevertheless, if $J\in\mathcal{O}(\varepsilon)$ then the first summand is of order $\varepsilon^2$ such that the eigenvalues become complex and our equilibrium is a stable spiral instead of a stable node.\\
            	
            	Nevertheless, the equilibrium is still globally stable and every orbit will eventually converge to it.
        	\end{remark}
        	
        	\paragraph{Extended system $(E,n,I)$.} 
        	    We have seen that an external current has an important impact on the Karma model. Therefore we next investigate an extended 3-dimensional systems with the additional slow equation
        	    \begin{equation*}
        	        I'=0
        	    \end{equation*}
        	    Figure \ref{fig:cusp} shows the critical manifolds for the Karma model as well as FitzHugh-Nagumo.\\
        	    \hspace*{-5mm}
            	\includegraphics[width=0.53\textwidth]{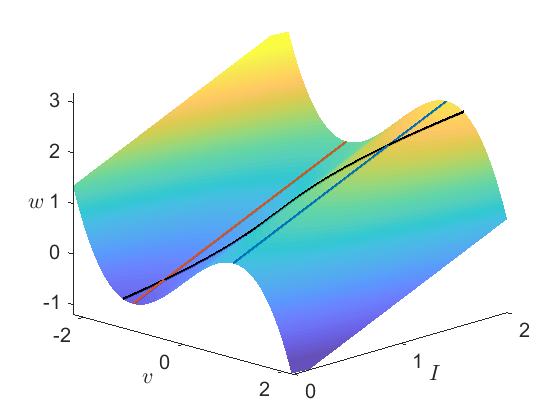}
            	\includegraphics[width=0.53\textwidth]{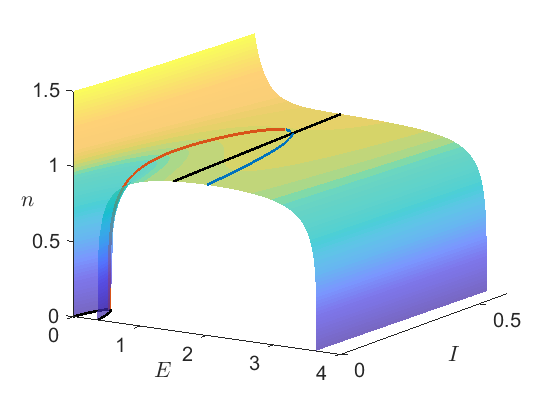}
            	\captionof{figure}{Critical manifold for FitzHugh-Nagumo (left) and Karma (right) for $M=4$ and $\varepsilon=10^{-2}$ in the $(E,n,I)$-space. In black we show the global equilibria and the red and blue curves represent the two curves of folds.\\}\label{fig:cusp}
            	
            	This representation allows for proper analysis of the fold curves. Although both models show exactly two curves of folds, their behaviour is clearly very different. Firstly we notice the extreme sensitivity of the Karma model to external currents close to $I_0$ which is not present for FHN. Furthermore, both fold curves in the FitzHugh-Nagumo model are parallel to each other while in the Karma model they collide and disappear like we had seen above. This collision point $I=I_1$, $E=\frac{4}{3}$ and the corresponding $n$ given by \eqref{eq:C0KarmaI} defines a cusp bifurcation in the Karma model. It is clearly a non-hyperbolic point and therefore it cannot be analysed using classical Fenichel theory. Nevertheless, we can do a similar analysis as for a fold point using a coordinate transformation to normal form and geometric blow-up. A detailed analysis of a cusp point using these techniques was presented by Broer et al. in \cite{Broer13}.\\
            	
            	We have shown above that both models exhibit relatively similar qualitative behaviour when considering $I$ a fixed parameter. Nevertheless, the existence of a cusp singularity presents the possibility for a diverse set of behaviours if we consider for example a slowly changing external current e.g. a slow periodic input. By extending the Karma model considering a change in $I$ we get the possibility of relaxation oscillations with a smooth return. This means that we can have oscillations whereby after a fast jump we are able to return to our starting point following only the slow dynamics. This type of behaviour is not possible in the FitzHugh-Nagumo model even after allowing changes in $I$.\\
            	
            	\begin{remark}
            	    The analysis presented in \cite{Broer13} assumes that the bifurcation point is not an equilibrium of the full system. In the Karma model this is in general the case, nevertheless we have the case where $I_1=I_2$ when the cusp point is in fact a global equilibrium. This case has (to our knowledge) not been analyzed mathematically yet and would be an interesting future extension to the current analysis. 
            	\end{remark}

    \subsection{Travelling waves}
        As the next step in our analysis we want to consider also the diffusion in the models concentrating on the existence of a travelling pulse in the 1D case. Like before, the existence as well as stability of travelling waves for the FitzHugh-Nagumo equations has been studied extensively, see for example \cite{Flores91, Hastings76, Jones84}. We are particularly interested in the construction of pulse solutions performed in \cite{Guckenheimer09}. There, Guckenheimer et al. looked at the asymmetric FHN equations
        \begin{equation}
    		\begin{aligned}
    			\frac{\partial v}{\partial t} &= D\Delta v + v(v-a)(1-v)-w+I\\
    			\frac{\partial w}{\partial t} &= \varepsilon(v-\gamma w)
    		\end{aligned}
        \end{equation}
        with the parameter values $\gamma=1$, $a=\frac{1}{10}$ and $D=5$. The system is very similar to \eqref{eq:FHN} also controlled by a cubic critical manifold in the ODE case. When we add the diffusion term the system exhibits travelling pulse solutions which they proved using a numerical continuation method for the fast fibers in the co-moving frame. In the parameter space $(w,c)$ the authors found a ``V''-shaped curve of fast heteroclinic fibers connecting the left and right branches of the critical manifold. When $c=0$ the system is Hamiltonian and there is a $w=w_*$ such that there is a double heteroclinic orbit. When $w$ is smaller than $w_*$ we have a connection from the left branch to the right one while when $w$ is bigger the connection goes in the opposite direction. A concatenation of this fibers combined with the slow flow on the critical manifold can then be perturbed analogously to the previous section for $\varepsilon>0$, although the technical details become mathematically very involved.\\
        
        Here we carry out a similar analysis for the Karma model starting by introducing the corresponding co-moving frame $z=x+ct$ such that the equations are now given by
        \begin{equation}\label{eq:KarmaPDE}
        	\begin{aligned}
        		cE_z &=DE_{zz}-E+2\left(E^*-n^M\right)(E^2-\delta E^3)+I\\
        		cn_z &= \varepsilon\left(\frac{1}{n_B}\theta(E-1)-n\right)
        	\end{aligned}
        \end{equation}
        We can easily transform the model into a first order system by introducing an additional variable $w$
        \begin{equation}\label{eq:KarmaTV}
        	\begin{aligned}
        	    E_z &= w\\
        		Dw_z &=cw+E-2\left(E^*-n^M\right)(E^2-\delta E^3)-I\\
        		cn_z &=\varepsilon \left(\frac{1}{n_B}\theta(E-1)-n\right)
        	\end{aligned}
        \end{equation}
        We now have two additional parameters with respect to the ODE model, namely $c$ and $D$. The parameter $c$ gives the velocity at which the travelling wave moves. Changing the sign of the parameter $c$ is equivalent to inverting the direction of the wave variable $z$ and substituting $w$ by $-w$. Therefore, without loss of generality we can restrict our analysis to $c>0$.\\
        
        The second parameter $D$ is the diffusion coefficient. For this parameter there are different scalings often used in the literature. Specifically in the original papers introducing the Karma model the author presents a diffusion coefficient $D\in\mathcal{O}(\varepsilon)$, introducing therefore a third scale to the system (see \cite{Karma93}) while a constant diffusion $D\in\mathcal{O}(1)$ was used in \cite{Karma94}.\\
        
        Below we focus on the model with $D\in\mathcal{O}(1)$ and for simplicity only the case without incoming current $I=0$. In the following theorems we want to illustrate that the Karma model \eqref{eq:Karma} can exhibit a travelling pulse solution with the resting state $(0,0)$ as start and end state.
        
        \begin{theorem}\label{th:KarmaTV0}
            In the singular limit $\varepsilon=0$ there exists a homoclinic candidate orbit to equations \eqref{eq:KarmaPDE} satisfying the asymptotic conditions
            \begin{equation}
                \lim_{z\to\pm\infty}(E(z),n(z))=(0,0)
            \end{equation}
        \end{theorem}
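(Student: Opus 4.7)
The plan is to emulate the Guckenheimer et al.~construction of the FHN travelling pulse, decomposing the singular system $\varepsilon=0$ of \eqref{eq:KarmaTV} into a layer problem and a reduced problem, constructing a fast heteroclinic fibre for each jump and a slow segment for each plateau, and then gluing the four pieces into a closed loop based at the rest state $(E,n)=(0,0)$.

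First I would write down the layer problem obtained by setting $\varepsilon=0$ in \eqref{eq:KarmaTV}, namely the planar system
\begin{equation*}
    E_z=w,\qquad Dw_z=cw+E-2(E^*-n^M)(E^2-\delta E^3),
\end{equation*}
with $n$ frozen and $c$ an unknown. Its equilibria at $w=0$ coincide with the branches of the ODE critical manifold \eqref{eq:C0Karma}, and a direct linearisation shows the outer branches $E=0$ and $E=E_R(n)$ are saddles, while the middle branch is a node/focus, so any orbit between the two outer branches is a saddle-to-saddle heteroclinic corresponding to a Nagumo-type bistable travelling front. A standard phase-plane/shooting argument (equivalently a Melnikov perturbation away from the Hamiltonian double heteroclinic at $c=0$) yields for every $n\in[0,1)$ a unique speed $c_{LR}(n)$ supporting a heteroclinic from $E=0$ to $E=E_R(n)$, and $c_{LR}$ depends continuously on $n$. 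The involution $(z,w,c)\mapsto(-z,-w,-c)$ of the layer equation shows that the reverse heteroclinic at positive speed $c$ exists for the parameter value $n$ iff $c_{LR}(n)=-c$.

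Rescaling to $\zeta=\varepsilon z$ and then sending $\varepsilon\to0$ collapses onto $C_0\times\{0\}$ and leaves the scalar reduced flow
\begin{equation*}
    n_\zeta=\frac{1}{c}\!\left(\frac{1}{n_B}\theta(E-1)-n\right),\qquad (E,n)\in C_0,
\end{equation*}
which on the left branch $E=0$ reads $n_\zeta=-n/c$ (exponential decay to $n=0$) and on the right branch $E=E_R(n)$ becomes a scalar ODE with an attracting equilibrium $n_{\mathrm{eq}}$ defined by $n_{\mathrm{eq}}=(E_R(n_{\mathrm{eq}})-1)/n_B$. Setting $c^{*}:=c_{LR}(0)>0$, the candidate pulse is then the concatenation of (i) a fast upstroke at $n=0$ with speed $c^{*}$ landing on $(E_R(0),0)$; (ii) a slow plateau along the right branch during which $n$ grows monotonically from $0$ to a matching value $n^{*}$; (iii) a fast downstroke at $n=n^{*}$ with the same speed $c^{*}$, i.e.\ $c_{LR}(n^{*})=-c^{*}$, landing on $(0,n^{*})$; and (iv) a slow relaxation along the left branch $E=0$ along which $n$ decays to $0$ as $\zeta\to\infty$. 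Segment (iv) supplies the asymptotic condition $(E(z),n(z))\to(0,0)$ as $z\to+\infty$, and the base point gives the same limit as $z\to-\infty$.

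The main obstacle is the matching condition together with reachability of $n^{*}$. Existence of $n^{*}\in(0,1)$ with $c_{LR}(n^{*})=-c^{*}$ follows from the intermediate-value theorem once one shows that $c_{LR}(n)$ attains values below $-c^{*}$ somewhere in $(0,1)$, a fact that I would read off the equal-area balance of the nonlinearity $\psi(E,n):=-E+2(E^{*}-n^{M})(E^{2}-\delta E^{3})$ between its outer zeros as $n\uparrow1$, where the right and middle equilibria merge at the fold and the left well dominates. One must then also verify $n^{*}<n_{\mathrm{eq}}$, since otherwise the slow plateau only asymptotes to $n_{\mathrm{eq}}$ and never reaches the downstroke; this is a parameter condition on $(M,n_B,D)$ that I would check by locating $n_{\mathrm{eq}}$ explicitly on the right branch and comparing with the zero of $n\mapsto c_{LR}(n)+c^{*}$. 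Once both facts are in hand the four segments meet continuously in the $(E,n)$-plane and produce the desired singular homoclinic loop based at $(0,0)$.
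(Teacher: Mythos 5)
Your setup of the layer and reduced problems, and the idea of matching the front and back speeds, is the right general framework, but the crucial matching step fails for the Karma nonlinearity --- and this failure is precisely the point of the theorem. You propose to close the loop by finding $n^{*}\in(0,1)$ with $c_{LR}(n^{*})=-c^{*}$, where $c^{*}=c_{LR}(0)$ is the upstroke speed, and you plan to obtain $n^{*}$ from the intermediate value theorem ``once one shows that $c_{LR}(n)$ attains values below $-c^{*}$ somewhere in $(0,1)$''. That premise is false here. The upstroke at $n=0$ selects $c^{*}\approx 1.77$, while on the return branch $n>\sqrt[M]{15/16}$ the saddle-to-saddle back speeds $|c_{LR}(n)|$ remain small, bounded by $c_{min}\approx 0.707$, which is only approached as $n\to 1^{-}$ (see Lemma \ref{lemma:heteroclinics} and Figure \ref{fig:heteroclinic(n,c)}). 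Hence there is no $n^{*}<1$ at which a saddle-to-saddle downstroke with the front's speed exists, and your four-piece FHN-style concatenation cannot be closed on the normally hyperbolic part of the critical manifold. (Your worry about $n^{*}<n_{\mathrm{eq}}$ is moot for $I=0$: the slow nullcline meets $C_0$ with $E,n>0$ only on the middle branch, so on the right branch the slow flow ascends monotonically all the way to the fold.)

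The construction in the paper resolves this by placing the wave back at the fold $n=1$, where $p_1$ and $p_2$ merge. There the connection from $p_2$ to $p_0$ is no longer a codimension-one saddle-to-saddle intersection: part \textit{(ii)} of Lemma \ref{lemma:heteroclinics} shows, via a negatively invariant region bounded by the relevant invariant manifolds and the Poincar\'e--Bendixson theorem, that a heteroclinic from $p_2$ to $p_0$ through the center manifold exists for \emph{every} $c\geq c_{min}$, in particular for $c=c^{*}\approx 1.77$. The singular pulse is then: fast upstroke at $n=0$ with speed $c^{*}$, slow ascent along the right branch up to the fold, fast downstroke at $n=1$ through the center direction, and slow descent along $E=0$ back to the origin. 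This pinning of the wave back at the fold is the key structural difference from FitzHugh--Nagumo that the theorem is meant to exhibit; to repair your argument you must replace the IVT matching by the fold-point analysis. Note also that the existence of the front branch itself ultimately rests on a shooting/continuation computation that would need rigorous numerics to be made fully precise, since a Melnikov argument only controls speeds near the Hamiltonian point $c=0$, $n^{M}=15/16$.
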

      
      We sketch the geometric idea of the proof of this result. The model, after transformation to the first order system \eqref{eq:KarmaTV}, is a $(2,1)$-fast-slow system with one-dimensional critical manifold given by
        	\begin{equation}
        	    C_0=\left\{(E,w,n): w=0,~E=0 \text{ or } n=\sqrt[M]{E^*-\frac{1}{2E(1-\delta E)}}\right\}
        	\end{equation}
        	
        	\paragraph{Reduced system.} The slow flow on $C_0$ differs from the one in the ODE model only by a factor $\frac{1}{c}$ so we are simply scaling the flow. In particular, we have the same global equilibria as before embedded into the $(E,n)$-plane.
        	
        	\paragraph{Layer problem.} The fast subsystem is defined by the equations
        	\begin{equation}
        	    \begin{aligned}
        	        E' &= w\\
        	        Dw' &= cw+E-2\left(E^*-n^M\right)(E^2-\delta E^3)
        	    \end{aligned}
        	\end{equation}
        	The equilibria correspond to the points on the critical manifold for the different values of $n$. By choosing a different representation we have the fixed point $p_0=(0,0)$ and for $n^M\leq 1.0415$
        	\begin{equation*}
        	    p_1=\left(2-2\sqrt{1-\frac{1}{2(E^*-n^M)}},0\right)~,~~p_2=\left(2+2\sqrt{1-\frac{1}{2(E^*-n^M)}},0\right)
        	\end{equation*}
        	The Jacobian at this points is given by
        	\begin{equation}
        	    J(E,w)=\begin{pmatrix}
        	    0&1\\
        	    \frac{1}{D}[1-2(E^*-n^M)(2E-3\delta E^2)]&\frac{c}{D}
        	    \end{pmatrix}
        	\end{equation}
        	with eigenvalues
        	\begin{equation*}
        	    \lambda_\pm=\frac{c}{2D}\pm\sqrt{\frac{c^2}{4D^2}+\frac{1}{D}[1-2(E^*-n^M)(2E-3\delta E^2)]}
        	\end{equation*}
        	and, when $\lambda_\pm$ are real, corresponding eigenvectors 
        	\begin{equation*}
        	    v_\pm=\begin{pmatrix}1\\\lambda_\pm\end{pmatrix}
        	\end{equation*}
        	We can directly check that the equilibria $p_0$ and $p_2$ are saddles and $p_1$ is unstable. In addition we know that $p_1$ is a node when 
        	\begin{equation*}
        	    c^2> 4D\left[2-4(E^*-n^M)\left(1-\sqrt{1-\frac{1}{E^*-n^M}}\right)\right]
        	\end{equation*}
        	and a spiral otherwise.\\
        	
        	Given the local structure around the critical manifold we want to find heteroclinic connections between $p_0$ and $p_2$ to later combine with the slow flow to heteroclinic candidate orbits.
    	
    	\begin{lemma}\label{lemma:heteroclinics}
    	    For equations \eqref{eq:Karmafast} it holds that
    	    \begin{enumerate}[(i)]
    	        \item For every $n\in[0,1]$ there exists a $c>0$ such that the system has a heteroclinic connection. When $n<\sqrt[M]{15/16}$ the orbit flows from $p_0$ to $p_2$ while for $n>\sqrt[M]{15/16}$ the orbit flows from $p_2$ to $p_0$. At $n=\sqrt[M]{15/16}$ the system has a double heteroclinic orbit in the limit $c=0$.
    	        \item For $n=1$ there exists a $c_{min}$ such that for every $c\geq c_{min}$ the system has a heteroclinic connection from $p_2$ to $p_0$.
    	    \end{enumerate}
    	\end{lemma}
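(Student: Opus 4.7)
The plan is to freeze $n$ and treat the first two equations of \eqref{eq:KarmaTV} as a planar autonomous system in $(E,w)$ depending on the two parameters $(c,n)$, equivalent to the scalar second-order ODE $DE''-cE'+F(E;n)=0$ with $F(E;n)=-E+2(E^{*}-n^{M})(E^{2}-\delta E^{3})$. A clean starting point is the factorization $F(E;n)=-2\alpha\beta\,E(E-E_{1}(n))(E-E_{2}(n))$, with $\alpha=E^{*}-n^{M}$, $\beta=\delta=1/4$, $E_{1}+E_{2}=1/\beta=4$, $E_{1}E_{2}=1/(2\alpha\beta)=2/\alpha$. Introducing the potential $V(E;n)=\int_{0}^{E}F(s;n)\,ds$, the identity
\begin{equation*}
\frac{d}{dz}\!\left[\tfrac{D}{2}w^{2}+V(E;n)\right]=c\,w^{2}
\end{equation*}
makes the system Hamiltonian at $c=0$ and energy-monotone for $c>0$. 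In particular, any heteroclinic between $p_{0}$ and $p_{2}$ (both lying on $\{w=0\}$) must satisfy $c\int(E')^{2}\,dz=V(E_{2};n)$, so its orientation is forced by $\mathrm{sgn}\,V(E_{2};n)$.

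For part (i) I integrate to obtain $V(E_{2};n)=-\alpha\beta E_{2}^{3}(2E_{1}-E_{2})/6$, which vanishes precisely when $E_{2}=2E_{1}$; combined with $E_{1}+E_{2}=4$ and $E_{1}E_{2}=2/\alpha$ this pins down $E_{1}=4/3$, $E_{2}=8/3$, $\alpha=9/16$, i.e.\ $n=n^{*}:=\sqrt[M]{15/16}$. At $n^{*}$ and $c=0$ the level curve $\{\mathcal{E}=0\}$ contains both saddles and closes up into the asserted double heteroclinic. For $n\neq n^{*}$ with $E_{1}\neq E_{2}$ the cubic $F$ is of Nagumo type, and I verify the closed-form profile
\begin{equation*}
E(z)=\frac{E_{2}}{1+e^{kz}},\qquad k=E_{2}\sqrt{\alpha\beta/D},\qquad c(n)=(2E_{1}-E_{2})\sqrt{D\alpha\beta},
\end{equation*}
by direct substitution into $DE''-cE'+F(E;n)=0$. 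The corresponding orbit runs from $p_{2}$ (as $z\to-\infty$) to $p_{0}$ (as $z\to+\infty$), and $\mathrm{sgn}\,c(n)=\mathrm{sgn}(2E_{1}-E_{2})$ is positive for $n>n^{*}$ and negative for $n<n^{*}$. In the paper's convention $c>0$, for $n<n^{*}$ one reverses the sign of $c$ (equivalently $z\mapsto -z$ and $w\mapsto -w$), turning the wave into the $p_{0}\to p_{2}$ connection asserted by the lemma.

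For part (ii), at $n=1$ one has $\alpha=1/2$, $E_{1}=E_{2}=2$, so $F(E;1)=-\tfrac{1}{4}E(E-2)^{2}$ and $p_{2}=(2,0)$ is non-hyperbolic with Jacobian eigenvalues $\{0,c/D\}$. I compute the local center manifold by the ansatz $w=h(E)$, balancing in $Dh'(E)h(E)=ch(E)+\tfrac{1}{4}E(E-2)^{2}$ to get $h(E)=-(E-2)^{2}/(2c)+O((E-2)^{3})$; the reduced flow is $E'\sim-(E-2)^{2}/(2c)$, so trajectories leave $p_{2}$ algebraically into $\{E<2\}$. Existence of a heteroclinic for $c\geq c_{\min}$ would then be established by a Fisher--KPP-style trapping-wedge argument: construct a positively invariant region inside $\{0\leq E\leq 2,\ w\leq 0\}$ bounded below by a curve joining $p_{0}$ and $p_{2}$ on which the vector field points inward, derive the invariance inequality as a condition on $c$, and identify the solvability threshold as $c_{\min}$; inside the wedge $p_{0}$ is the only equilibrium, so after compactification Poincar\'e--Bendixson forces the $W^{c}$-orbit out of $p_{2}$ to land on $W^{s}(p_{0})$. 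As a sanity check, plugging $n=1$ into the closed-form profile of part (i) gives $c=\sqrt{D/2}$, a natural candidate for $c_{\min}$.

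The delicate step is clearly part (ii). Part (i) reduces to an explicit closed-form calculation once the Nagumo factorization is in hand. At $n=1$, by contrast, the target equilibrium $p_{2}$ is non-hyperbolic and the incoming trajectory approaches it only algebraically, so the usual exponential/linear-spectral tools behind the Fisher--KPP minimum-speed theory are not directly available. Carrying out the trapping-wedge construction with this degenerate geometry, pinning down the precise value of $c_{\min}$, and ruling out heteroclinics for $c<c_{\min}$ will likely require careful asymptotic matching between the algebraic center-manifold description near $p_{2}$ and the exponential stable-manifold description near $p_{0}$.
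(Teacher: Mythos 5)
Your proposal is correct, and for part (i) it takes a genuinely different --- and in fact stronger --- route than the paper. The paper establishes the heteroclinics of the fast subsystem numerically in the spirit of \cite{Guckenheimer09}: it shoots from the local invariant manifolds of $p_0$ and $p_2$, measures the splitting $\Delta(n^M,c)$ on the section $\Sigma=\{E=E_2/2\}$, and continues the zero set in the $(n,c)$-plane, remarking that this could be made rigorous by validated numerics; only the double heteroclinic at $(n^M,c)=(15/16,0)$ is located analytically, via the Hamiltonian $H$ at $c=0$. Your Nagumo factorization replaces all of this with the exact front $E(z)=E_2(1+e^{kz})^{-1}$ with $c(n)=(2E_1-E_2)\sqrt{D\alpha\beta}$ (which checks out by direct substitution, using $E_1+E_2=1/\delta$ and $E_1E_2=2/\alpha$), and the identity $\frac{d}{dz}\bigl[\tfrac{D}{2}w^2+V\bigr]=cw^2$ then forces the orientation from $\mathrm{sgn}\,V(E_2;n)$; this reproduces the threshold $n^M=15/16$ and, at $n=1$, yields $c=\sqrt{D/2}\approx 0.707$, exactly the value the paper only reports numerically as $c_{\min}$. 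What you give up relative to the continuation computation is the global picture of the $(n,c)$-locus (the ``V''-shaped curve and its flattening for $M\gg 1$), but the lemma does not require it. For part (ii) the two arguments are of the same type --- a planar trapping region plus Poincar\'e--Bendixson applied in backward time, with the connecting orbit entering $p_2$ along the center manifold --- the paper bounding its negatively invariant set by the $E$-axis, $W^s(p_0)$, $W^u(p_2)$ and a vertical segment at $E=1$, whereas you propose a KPP-style lower barrier; your version is left as a plan, but the paper's is an equally brief sketch, and your expansion $w=-(E-2)^2/(2c)+O((E-2)^3)$ correctly supplies the degenerate local behaviour at $p_2$ that the paper only alludes to.
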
 
    	\begin{proof}[Sketch of proof of (i).]
        	In order to prove the first statement we are going to follow the strategy in \cite{Guckenheimer09}. Our fist step is to compute the stable and unstable manifolds of $p_0$ and $p_2$ by taking initial conditions close to the equilibria on their tangent spaces. Next, we define the plane $\Sigma$ where $E=\frac{E_2}{2}$ and calculate the intersection points $q_0$ and $q_2$ with the previously computed orbits depending on $n^M$ and $c$. The zeros of the function
        	\begin{equation}
        	    \Delta(n^M,c)=q_0(n^M,c)-q_2(n^M,c)
        	\end{equation}
        	define finally the parameters which give rise to heteroclinic orbits in the fast subsystem. Once we have one such parameter pair, the complete curve in the parameter space can be found because of continuity by slowly changing $n^M$ and computing again the zeros of $\Delta$. Figure \ref{fig:heteroclinic(n,c)} shows the computed zeros.\\
        	
            \includegraphics[width=\textwidth]{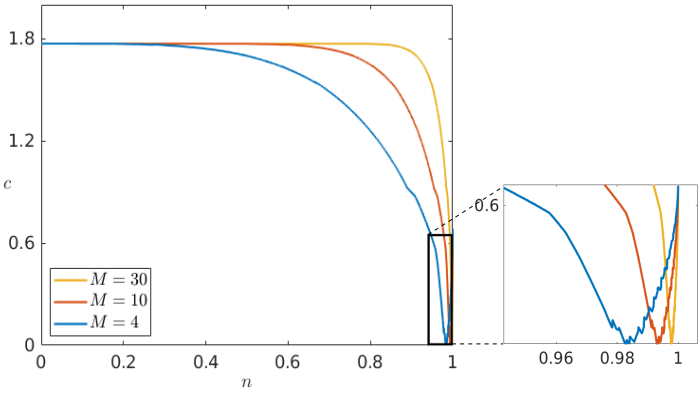}
            \captionof{figure}{Heteroclinic orbits of the fast subsystem for $D=1$ and different values of $M$ in the $(n,c)$-plane. The left branch with $n<\sqrt[M]{15/16}$ corresponds to orbits connecting the origin to $p_2$ as $z\to\infty$ while on the right branch with $n>\sqrt[M]{15/16}$ we have heteroclinic orbits connecting the equilibria in the opposite direction (see close-up). 
            \\}\label{fig:heteroclinic(n,c)}
            
            The left branch of zeros reaching from $n=0$ to $n=\sqrt[M]{15/16}$ corresponds to the intersection of the unstable manifold of $p_0$ with the stable manifold of $p_2$. The right branch (see close-up) corresponds to the unstable manifold of $p_2$ intersecting the stable manifold of $p_0$. This numerical computation could then be made rigorous, e.g., via employing rigorous numerical techniques, which are already well-established in the context of FHN~\cite{ArioliKoch}, which concludes the proof of first part of statement \textit{(i)}.
    		For the last part of the statement we observe that in the limit $c=0$ the fast subsystem is Hamiltonian with the first integral given by
    		\begin{equation}
    		    H(E,w)=\frac{1}{2}w^2-\frac{1}{2D}E^2+\frac{2}{D}(E^*-n^M)\left(\frac{1}{3}E^3-\frac{\delta}{4}E^4\right)
    		\end{equation}
    	    We calculate directly that the energy level at the origin is always 0 and $H(p_2)=0$ holds if and only if $n^M=\frac{15}{16}$. Together with the results illustrated in Figure \ref{fig:heteroclinic(n,c)} this strongly indicates that for $(n^M,c)=(15/16,0)$ the system has a double heteroclinic orbit. We can confirm this by computing the energy level $H(E,w)=0$ as shown in Figure \ref{fig:hamiltonian}.
        \end{proof}
		\begin{center}
	        \includegraphics[width=.7\textwidth]{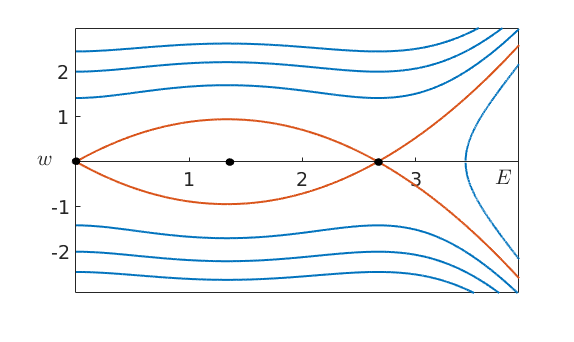}
	        \vspace*{-5mm}
	        \captionof{figure}{Energy levels of \eqref{eq:Karmafast} when $c=0$ showing a double heteroclinic orbit when $H(E,w)=0$ (orange).}\label{fig:hamiltonian}
	   \end{center}
        
        Before we continue illustrating the geometric ideas behind the proof of the second part of Lemma \ref{lemma:heteroclinics}, we want to make a remark regarding the previous construction.
        
        \begin{remark}
            In Figure \ref{fig:heteroclinic(n,c)} we can see the curve of heteroclinic orbits for different values of $M$. In particular we can see the insensitivity of the wave-front velocity with respect to the slow variable $n$ when $M\gg 1$ which is one of the important advantages mentioned in \cite{Karma93} of the Noble and Karma model over FitzHugh-Nagumo.
        \end{remark}
        
        \begin{remark}
            In \cite{Deng91} Deng proved that in the FitzHugh-Nagumo model, under certain conditions, the perturbation of a double heteroclinic orbit in the full system can result in infinitely many front and back wave solutions with an arbitrary number of oscillations. Although his results are not directly applicable in our situation as we would have to adjust the slow variable nullcline to obtain two full system equilibria on the two saddle-type branches, the existence of a double fast subsystem heteroclinic orbit in the Karma model clearly indicates already the possibility of more complex travelling waves than just single pulses.
        \end{remark}
        
        \begin{proof}[Sketch of proof of (ii).]
            We have seen in the previous part that the unstable manifold of $p_2$ and the stable manifold of $p_0$ connect uniquely for $c=c_{min}\approx 0.707$. Nevertheless, for $c>c_{min}$ we find a negatively invariant set enclosed by the $E$-axis, the stable manifold of $p_0$ and unstable manifold of $p_2$ and the vertical segment connecting them at $E=1$ as shown in Figure \ref{fig:heteroclinic(E,w)}. Since we know there are no further equilibria in this set and therefore also no limit cycle we can apply the Poincar\'e-Bendixson Theorem to obtain that the stable manifold of $p_0$ converges for $t\to-\infty$ to $p_2$ through the center manifold giving rise to further heteroclinic connections from $p_2$ to $p_0$.
        \end{proof}
        \begin{center}
            \includegraphics[width=0.7\textwidth]{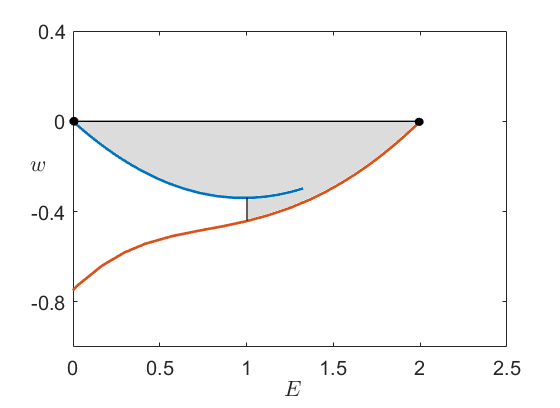}
            \captionof{figure}{Phase plane $(E,w)$ of the fast subsystem for $n=1$ and $c>c_{min}$. We can see the unstable manifold of $p_0$ (blue) and the stable manifold of $p_2$ (orange) and the negatively invariant set enclosed by them (grey).}\label{fig:heteroclinic(E,w)}
        \end{center}
        
        \begin{proof}[Sketch of Proof of Theorem \ref{th:KarmaTV0} (continued).]
            We can now easily construct a singular candidate orbit combining the slow and fast segments. Starting at the origin as the resting state we can jump to $p_2$ by a fast fibre where we follow the slow flow upwards. Since we jumped with $c\approx 1.77$ we cannot jump until we reach the fold point at $n=1$. Using the additional fast fibres we identified above we are able to jump back to $p_0$ and there follow the slow flow towards the origin.
        \end{proof}
        	
    	\begin{theorem}
    	    The homoclinic candidate orbit found in the singular limit $\varepsilon=0$ of equations \eqref{eq:KarmaPDE} can be perturbed to a homoclinic solution of the full system with $\varepsilon>0$.
    	\end{theorem}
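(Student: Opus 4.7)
The plan is to follow the standard blueprint for perturbing singular homoclinic orbits in $(2,1)$-fast-slow systems, adapting the Jones (1984) construction for the FitzHugh-Nagumo travelling pulse to the present setting. The candidate orbit is a concatenation of two fast heteroclinic fibers, whose transversality was essentially verified in Lemma \ref{lemma:heteroclinics}, with two slow segments: one on $C_0^L=\{E=w=0\}$ and one on the right branch of $C_0$ between $n=0$ and the fold at $n=1$. Following the idea of the proof of Theorem \ref{th:Karma1}, I would split the $(E,w,n)$-phase space along $\{E=1\}$ and extend the vector field smoothly on each side, so that Fenichel's theorems apply separately on each half. This yields locally invariant slow manifolds $C_\varepsilon^L$ and $C_\varepsilon^R$ that are $\mathcal{O}(\varepsilon)$-close to their unperturbed counterparts. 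Both are of saddle type in the full $3$-dimensional system, since a quick computation of the Jacobian of the $(E,w)$-layer problem yields $\det J=-(1/D)[1-2(E^*-n^M)(2E-3\delta E^2)]<0$ at both $p_0$ and $p_2$; consequently each carries $2$-dimensional stable and unstable foliations $W^{s,u}(C_\varepsilon^{L,R})$.

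Next, I would promote the fast heteroclinic orbits from Lemma \ref{lemma:heteroclinics} to transverse intersections $W^u(C_0^L)\pitchfork W^s(C_0^R)$ along the forward jump near $n=0$ and $W^u(C_0^R)\pitchfork W^s(C_0^L)$ along the back jump at $n=1$. Transversality with respect to the $n$-parameter is precisely the statement that the splitting function $\Delta(n^M,c)$ of Lemma \ref{lemma:heteroclinics} has simple zeros along the ``V''-shaped curve of Figure \ref{fig:heteroclinic(n,c)}, which is clearly the case away from its tip. Because the back fast fiber crosses $\{E=1\}$ transversely (there $E'=w\neq0$), the smooth pieces on either side are matched by $C^0$-concatenation and transversality is preserved in the $w$- and $n$-directions tangent to the discontinuity surface. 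Openness of transverse intersection then produces persistent connections $W^u(C_\varepsilon^L)\cap W^s(C_\varepsilon^R)$ and $W^u(C_\varepsilon^R)\cap W^s(C_\varepsilon^L)$ for all small $\varepsilon>0$.

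With the slow manifolds and connecting fibers in place, I would track the $1$-dimensional unstable manifold $W^u(0)$ of the origin through the singular cycle. The forward fast fiber lands $W^u(0)$ transversely on $W^s(C_\varepsilon^R)$, so exponential attraction brings it $\mathcal{O}(\varepsilon)$-close to $C_\varepsilon^R$. The Exchange Lemma of Jones and Kopell, valid because $C_\varepsilon^R$ is of saddle type and the incoming manifold meets its stable foliation transversely, guarantees that upon exit the tracked manifold is $C^1$-close to $W^u(C_\varepsilon^R)$. The back fast fiber then transfers it into $W^s(C_\varepsilon^L)$; exponential attraction to $C_\varepsilon^L$, followed by the linear contraction $n'=-\varepsilon n/c$ along that branch, returns the orbit to the origin. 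A dimension count ($\dim W^u(0)=1$, $\dim W^s(0)=2$ in $\mathbb{R}^3$) shows that the resulting intersection is of the expected codimension, yielding a genuine homoclinic orbit.

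The main obstacle is the passage near the generic fold point $p_F=(2,1)$ at the upper end of $C_\varepsilon^R$, where normal hyperbolicity fails and the standard Exchange Lemma does not apply. To close this gap one has to splice the Exchange Lemma argument on the regular part of $C_\varepsilon^R$ together with a geometric blow-up of the fold in the spirit of Krupa-Szmolyan, exactly as invoked in Theorem \ref{th:FHNepsilon}, and then track how the stretched direction is rotated by the blow-up charts into alignment with the back fast fiber. Fortunately the fold sits at $E=2>1$, so the non-smoothness at $E=1$ does not interfere with the blow-up itself; the $\{E=1\}$ matching is handled separately on the back fiber, where the transverse variables $w$ and $n$ are continuous across the discontinuity. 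Making this coordination of blow-up, Exchange Lemma, and discontinuity matching rigorous is the only genuinely new technical step; all remaining pieces are direct transcriptions of the classical FitzHugh-Nagumo travelling-pulse argument.
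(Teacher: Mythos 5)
Your proposal is correct in outline and follows the same skeleton as the paper's (very terse) argument: Fenichel theory on compact normally hyperbolic pieces of the two saddle-type branches, a splitting of the phase space along $\{E=1\}$ with continuous matching of the fast fibres there, and a geometric blow-up at the fold $(2,0,1)$. What you add, and what the paper leaves entirely implicit, is the actual tracking machinery: the Exchange Lemma to follow $W^u(0)$ past the saddle-type slow manifold $C_\varepsilon^R$, explicit transversality of the fast jumps via simple zeros of the splitting function from Lemma \ref{lemma:heteroclinics}, and a dimension count. These are exactly the ingredients needed to turn the paper's sketch into a proof, so your version is the more honest one. Two cautions. First, for the front jump the free parameter that makes the intersection robust is the wave speed $c$, not $n$: the front leaves the equilibrium itself at $n=0$, so $c$ is selected there ($c\approx 1.77$) and the homoclinic for $\varepsilon>0$ exists at a nearby $c(\varepsilon)$; your phrase ``transversality with respect to the $n$-parameter'' applies to jumps off the interior of a branch, not to this one. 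Second, and more substantively, the back jump cannot be formulated as $W^u(C_0^R)\pitchfork W^s(C_0^L)$ at $n=1$, because at $n=1$ the right branch has already collapsed into the fold point and $W^u(C_0^R)$ does not exist there in the Fenichel sense; the back heteroclinic of Lemma \ref{lemma:heteroclinics}(ii) emanates from the fold through its center manifold, which is precisely the feature the paper singles out as the key difference from FitzHugh--Nagumo. So the entire exit, not merely the approach, must be resolved inside the blow-up (a $(2,1)$ version of the Krupa--Szmolyan fold analysis with the extra hyperbolic fast direction carried along), and the Exchange Lemma hands off to the blow-up charts rather than to a transverse fibre intersection. You flag this as the main obstacle in your last paragraph, which is the right instinct; the paper does not resolve it either, referring only to the appendix on blow-up.
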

        \begin{proof}[Idea of proof]
            The transition from the singular limit to the regular case can be done analogously to Section \ref{sec:ODE}. Away from $E=1$ where the system is not smooth and the non-hyperbolic fold point $(2,0,1)$ we can apply Fenichel's Theory (theorems \ref{th:fenichel1} - \ref{th:fenichel3}) to obtain the corresponding orbit in the regular case. Again, we can extend the orbits for $E=1$ by continuity since we know that we are away from the critical manifold and finally the fold point can be analysed using geometric blow-up as introduced in Appendix \ref{sec:blowup}.
        \end{proof}
        
        We recall that in the FitzHugh-Nagumo model a travelling wave will jump to a fast fibre directly from the normally hyperbolic part of the critical manifold. We have now shown that in contrast to that a pulse solution for Karma model needs the jump segments generated by the fold point through the center manifold. This is a key difference between the two models. It results in a fixed position of the wave back and a slower repolarization than depolarization rate which Karma already identified as important properties for cardiomyocytes (see \cite{Karma93}).
            
\section{Numerical simulations} 
    In this section we simulate the full PDE systems with a focus on the Karma model. In particular, we want to interpret the numerical simulations in relation to the analysis presented above in order to understand the PDE dynamics~\cite{Kuehn19} we can actually observe. For this we will use the parameter values $\varepsilon=10^{-2}$, $D=1$, $M=4$, $n_B=0.5$ and $I=0$ except explicitly mentioned otherwise. Figure \ref{fig:Sim} shows the evolution of the system initialised with a bump function centered at $x=50$.\\
    \\
    \includegraphics[width=0.5\textwidth]{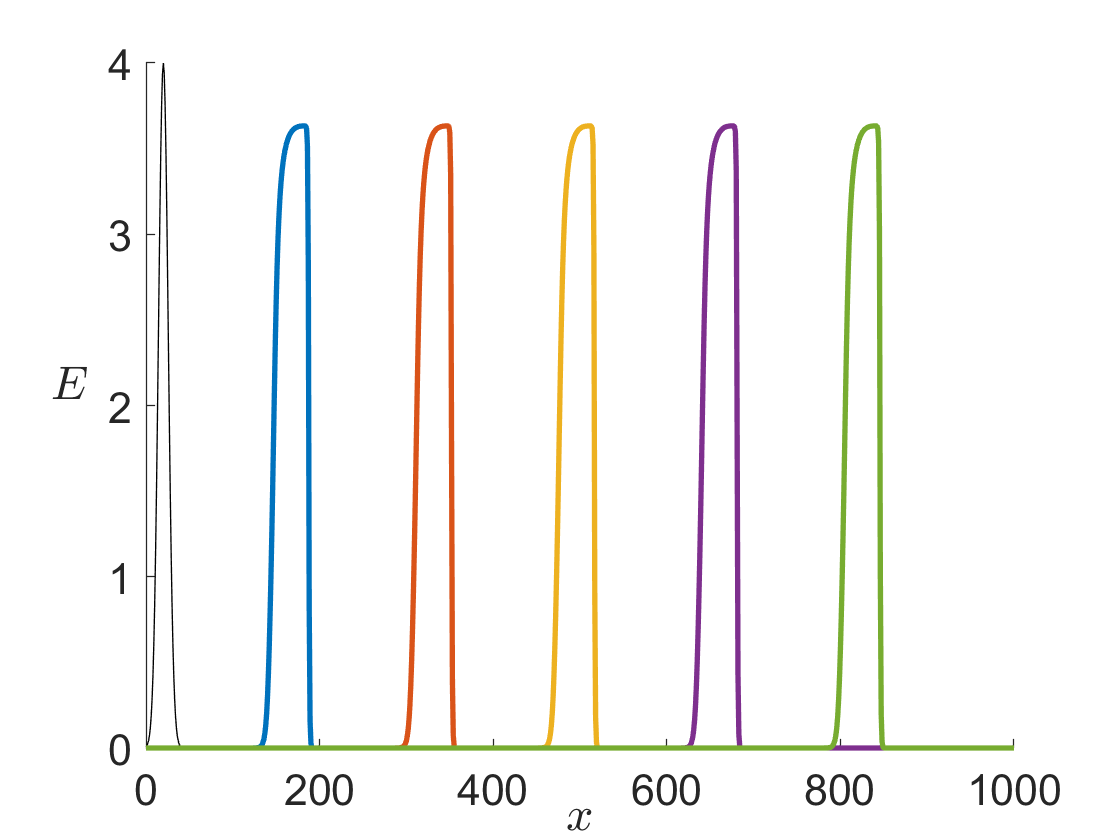}
    \includegraphics[width=0.5\textwidth]{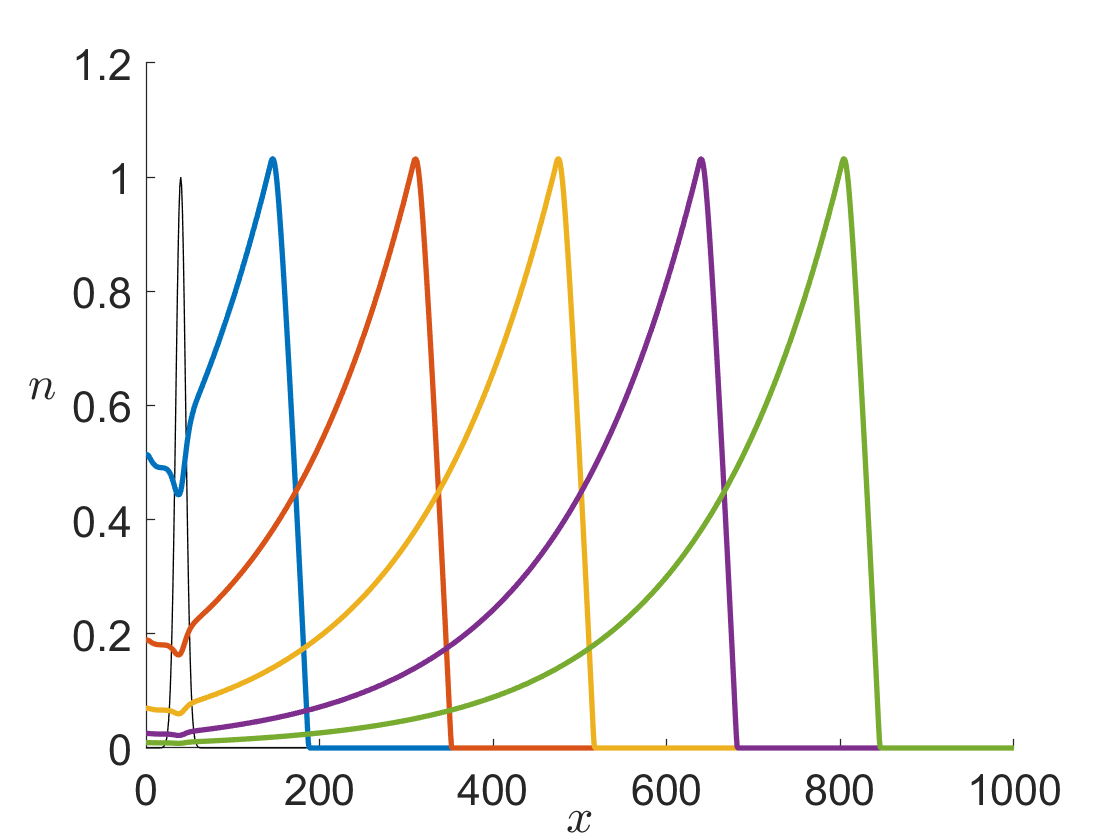}
    \captionof{figure}{\label{fig:Sim}Simulation of the Karma PDE model \eqref{eq:KarmaPDE} for the shown initial conditions (black) at $t=100,200,300,400 \text{ and } 500$.\\}
    
    For the Karma model, we see from Figure~\ref{fig:Sim} that in fact for a big enough region in $x$ the dynamics converge to a travelling pulse as we have found analytically. Since the simulations converge to a travelling wave given an arbitrary initial profile it (most likely) follows that the travelling pulse is at least locally asymptotically stable and that it does have a substantial basin of attraction. We have not proven the local asymptotic stability analytically here but this would be an interesting point in future work as it is well-known that the FHN PDE has wide parameter ranges, where stable pulses occur and where geometric techniques allow us to prove stability~\cite{Jones84,Jones91}.\\
    
    As a comparison, Figure \ref{fig:FHNSim} shows a similar simulation for the FitzHugh-Nagumo model~\eqref{eq:FHN} with $\varepsilon=10^{-2}$, $D=1$ and $I=0$.\\
    \\
    \includegraphics[width=0.5\textwidth]{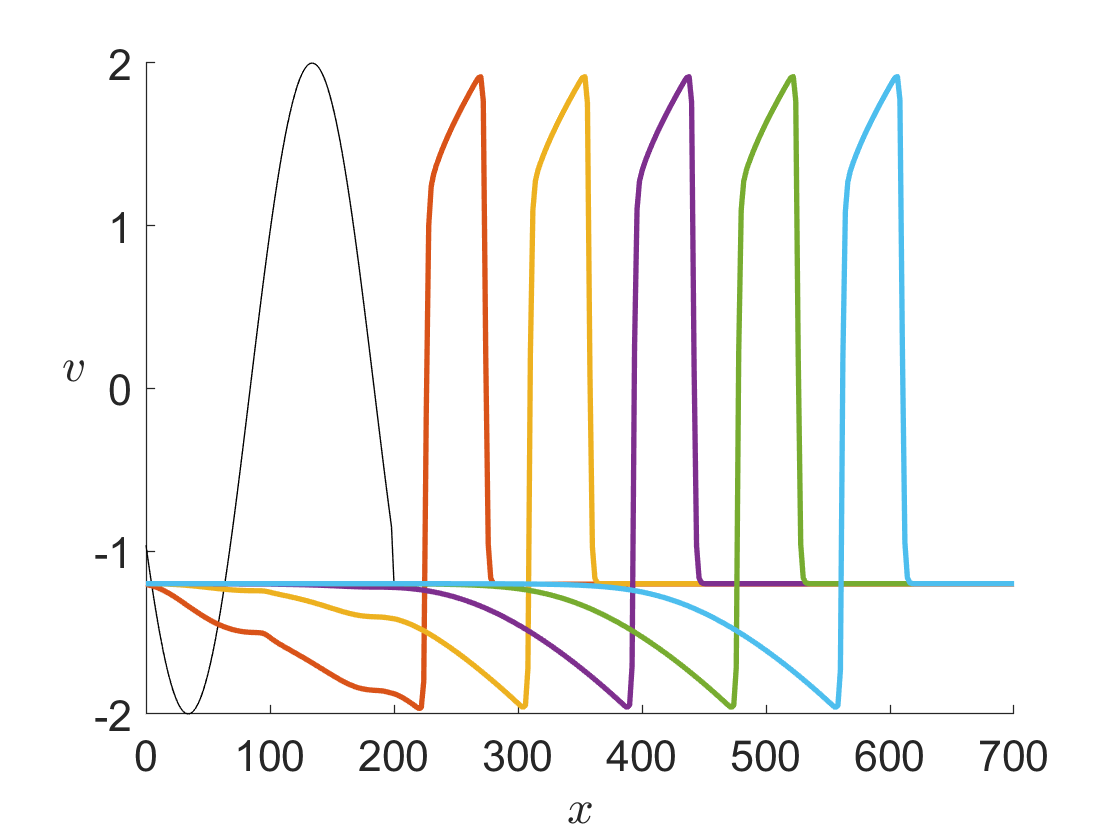}
    \includegraphics[width=0.5\textwidth]{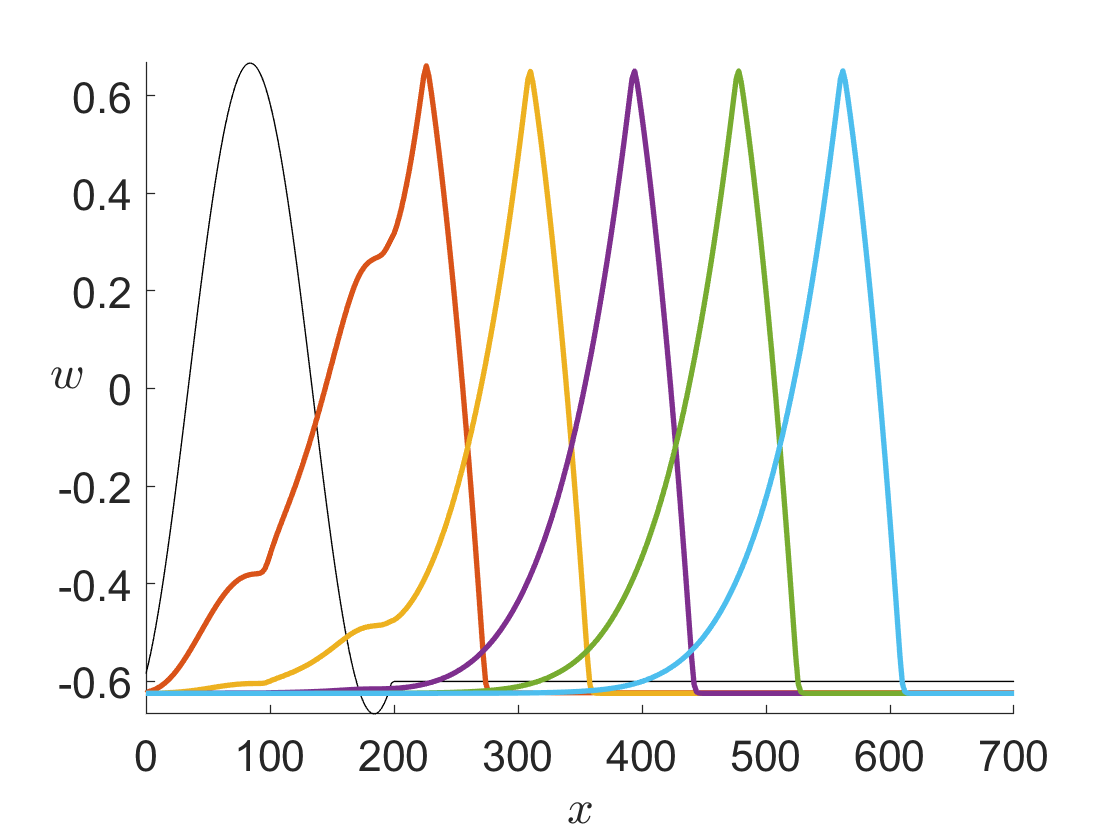}
    \captionof{figure}{Simulation of \eqref{eq:FHN} for the shown initial conditions (black) at $t=100,200,300,400 \text{ and } 500$.\\}\label{fig:FHNSim}
    
    At first sight we see that a big difference between Karma and FitzHugh-Nagumo is the hyperpolarization present only in the second model. Although there are heart tissues which show hyperpolarization, if we want to model e.g. ventricular cells the representation in the Karma model is notably more accurate. Furthermore we recall that the repolarization jump of the travelling wave we constructed in the previous section is ignited differently in both models, once on the fold point and once on the hyperbolic part of the manifold. Figure \ref{fig:TVPhase} shows that this is the case as well for the limit wave in the full PDE model. As stated before this is the reason for the slower recovery rate in the Karma equations which gives us a key difference between both models.\\
    \\
    \includegraphics[width=0.5\textwidth]{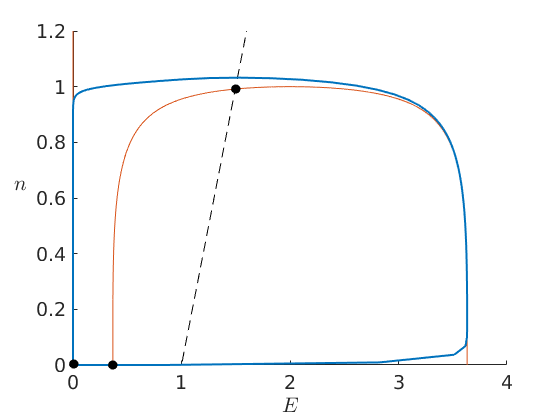}
    \includegraphics[width=0.5\textwidth]{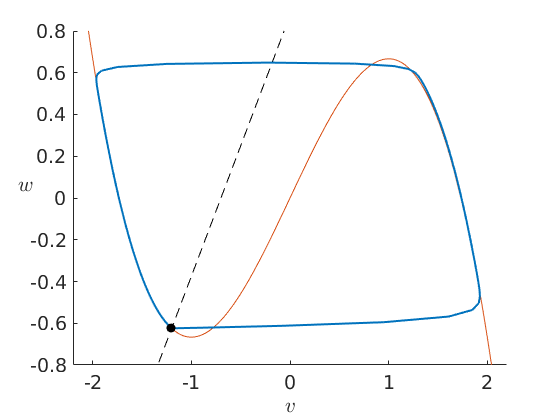}
    \captionof{figure}{Projection of the PDE solution for $t=500$ onto the $(E,n)$-plane in the Karma model (left) and the $(v,w)$-plane in the FitzHugh-Nagumo model.\\}\label{fig:TVPhase}

    To finish the numerical analysis we want to take a closer look at the effect of other parameters involved in the models 
    and look first at $\varepsilon$. We start with the Karma model. Following the values introduced in \cite{Karma93, Karma94} we have chosen for our simulations $\varepsilon=10^{-2}$ as our basis value. In addition, to make sure that the analysis above holds and we have in fact a travelling pulse solution, we need $\varepsilon$ to be small enough. Increasing $\varepsilon$ shows that already for $\varepsilon=0.08$ the travelling pulse dynamics seems to break down. Therefore, we will focus on smaller values of $\varepsilon$. By simulating the model with lower values we notice that, as expected, $n$ becomes slower as we decrease $\varepsilon$ so that the pulses for $E$ as well as $n$ elongate (see Figure \ref{fig:SimEpsilon}). Further we observe in the right panel that the convergence speed towards the travelling pulse is much slower for smaller $\varepsilon$. Nevertheless the wave speed appears to stay unchanged for different values of $\varepsilon$. Since we analytically demonstrated a geometric construction for the existence of the travelling pulses taking the wave speed $c$ as a parameter we would in fact expect changes in $c$ of order $\varepsilon$ with $c$ converging to the constant value $\approx 1.77$ as $\varepsilon\to 0$. It is also intuitively clear from a biological point of view that the wave speed should depend on the properties of the medium, e.g. the diffusion $D$, but be quite independent of the cells recovery speed.\\
    \\
    \includegraphics[width=0.5\textwidth]{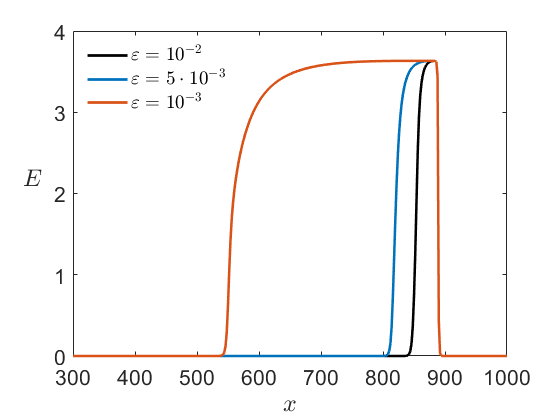}
    \includegraphics[width=0.5\textwidth]{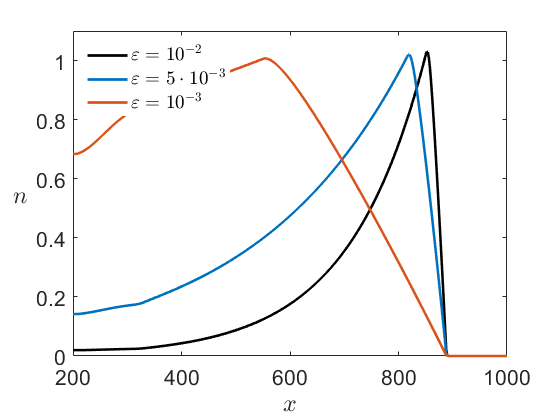}
    \captionof{figure}{Time shot of simulations of \eqref{eq:KarmaPDE} for multiple values of $\varepsilon$.\\}\label{fig:SimEpsilon}
    
    Again we can compare this with the effects of varying $\varepsilon$ in the FitzHugh-Nagumo model shown in Figure \ref{fig:FHNSimEpsilon}. Overall the effect of varying $\varepsilon$ observed in both models is similar. Nevertheless, for $\varepsilon=10^{-3}$ we find a change in the wave speed in the FitzHugh-Nagumo model while, as mentioned above, is not visible for the Karma model.\\
    \\
    \includegraphics[width=0.5\textwidth]{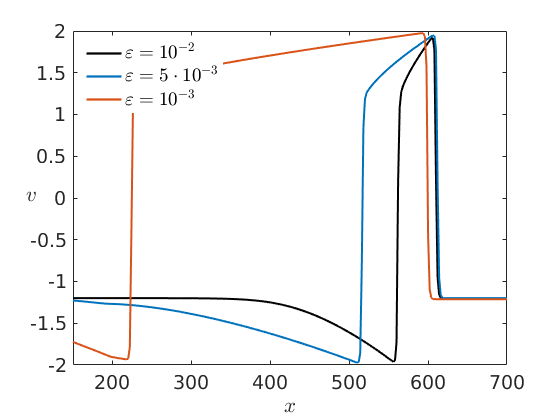}
    \includegraphics[width=0.5\textwidth]{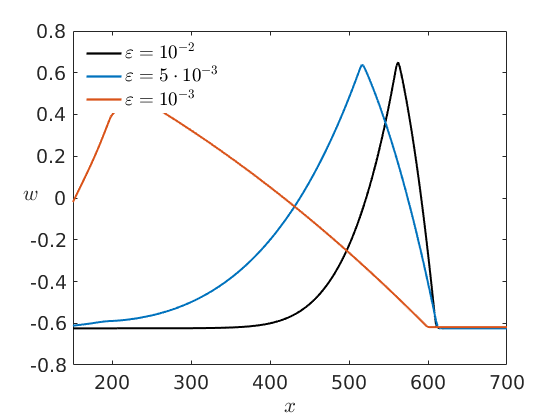}
    \captionof{figure}{Time shot of simulations of \eqref{eq:FHN} for multiple values of $\varepsilon$.\\}\label{fig:FHNSimEpsilon}
    
    We now want to consider the effects of different diffusion coefficients $D$ again starting with the Karma model. As mentioned before, we use as basis value for the diffusion $D=1$ for simplicity although the value used in \cite{Karma94} is $2.75$. In particular, we would like to make sure that $D\in\mathcal{O}(1)$ so that the previous analysis applies. Specifically for our model with $\varepsilon=10^{-2}$ our simulations lead to assume that $D>0.11$ since otherwise the pulse seams to disappear. In Figure \ref{fig:SimD} we consider three different simulations starting with the same initial conditions for different diffusion coefficients in the range of interest. We see that in this case the wave velocity is as expected strongly affected. An increase in the diffusion rate leads to higher wave velocity. Furthermore we also see that a bigger diffusion coefficient also results in a slightly longer pulse.\\
    \\
    \includegraphics[width=0.5\textwidth]{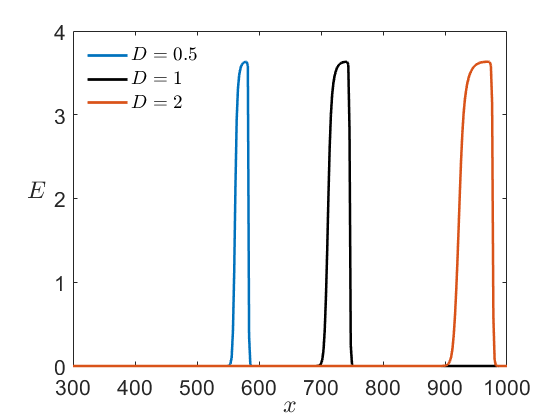}
    \includegraphics[width=0.5\textwidth]{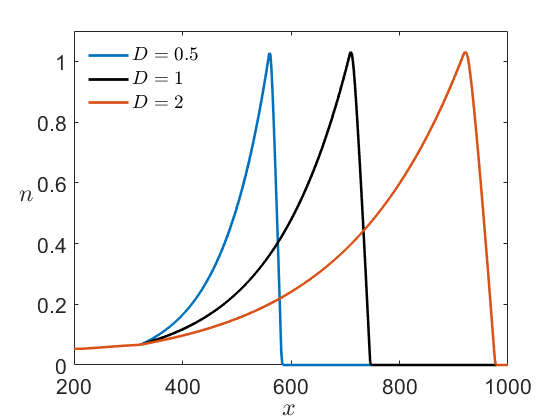}
    \captionof{figure}{Time shot of simulations of \eqref{eq:KarmaPDE} for multiple values of $D$.\\}\label{fig:SimD}
    
    In the corresponding simulation of the FitzHugh-Nagumo in Figure \ref{fig:FHNSimD} we see that the effects of different diffusion coefficients on both models are equivalent.\\
    \\
    \includegraphics[width=0.5\textwidth]{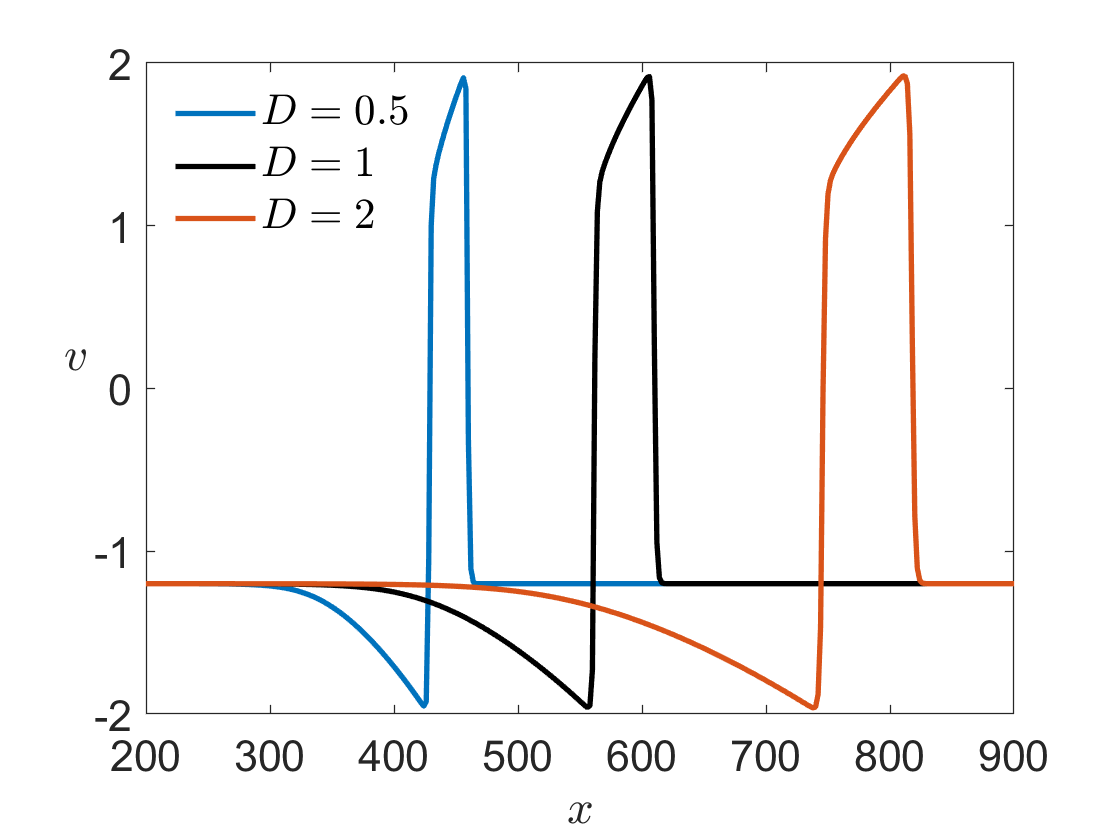}
    \includegraphics[width=0.5\textwidth]{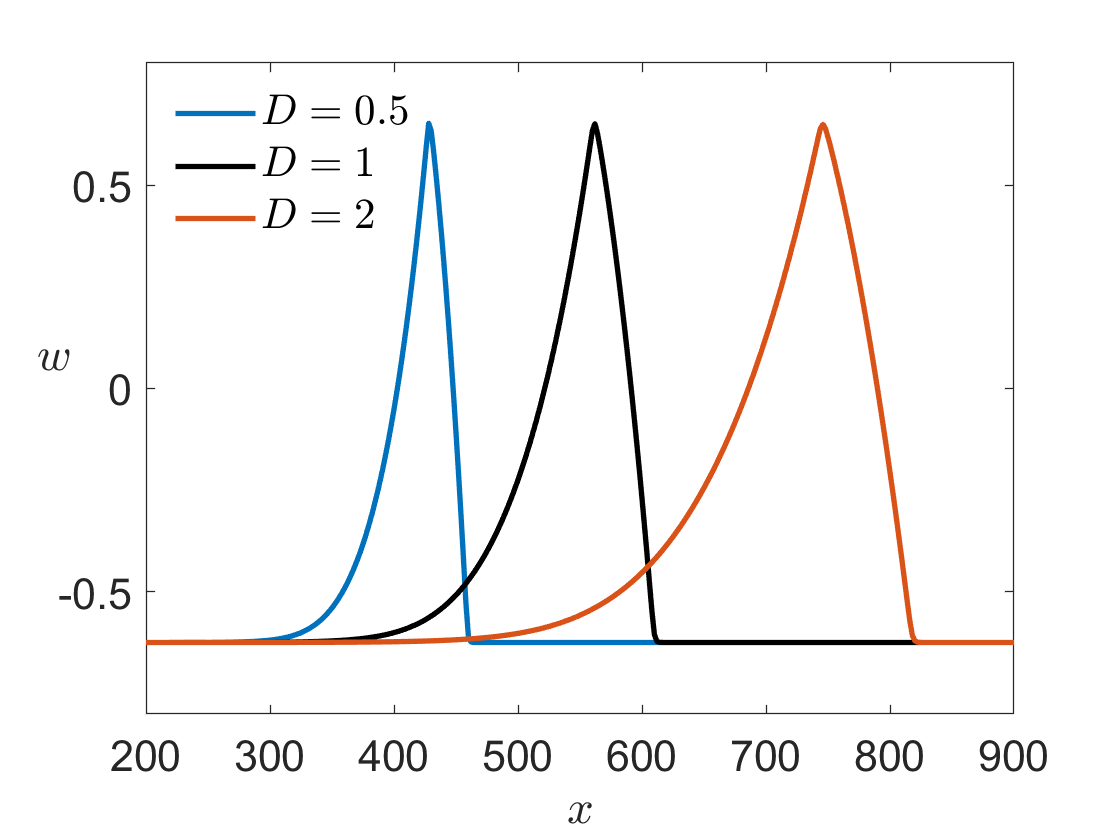}
    \captionof{figure}{Time shot of simulations of \eqref{eq:FHN} for multiple values of $D$.\\}\label{fig:FHNSimD}
    
    Similarly we can look at the control parameters $M$ and $n_B$ specific to Karma which are not fixed a priori. Using as a starting point again the values introduced by Karma in \cite{Karma93, Karma94} we follow the range of interest for the parameter $M$ that from modelling point of view varies from $M=4$ up to $M=30$. Even so, a higher or lower value does not qualitatively change the dynamics of the system. In Figure \ref{fig:SimM} we see that $M$ has almost no effect on the dynamics of the slow variable $n$ but controls the sharpness of the pulse for $E$. From biophysical modelling point of view this means that $M$ controls the sensitivity of the voltage $E$ with respect to the gating variable $n$.\\
    \\
    \includegraphics[width=0.5\textwidth]{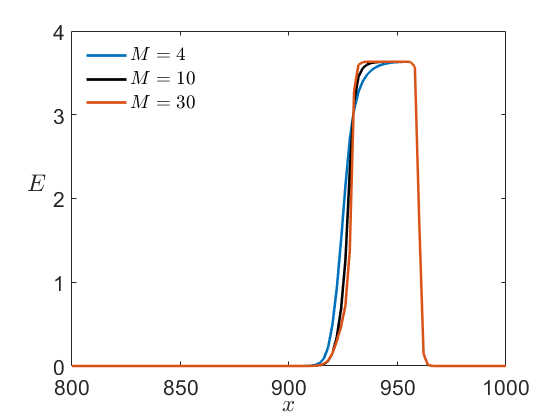}
    \includegraphics[width=0.5\textwidth]{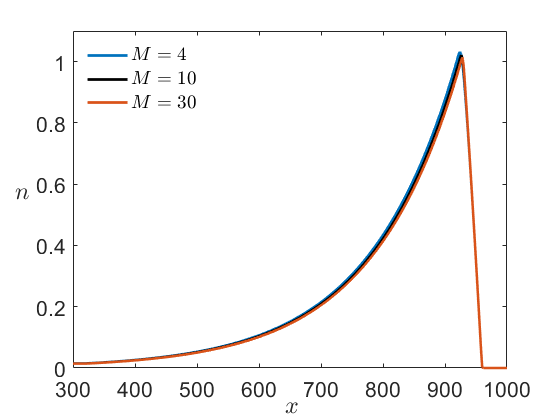}
    \captionof{figure}{Time snapshot of simulations of \eqref{eq:KarmaPDE} for multiple values of $M$.\\}\label{fig:SimM}
    
    On the other hand we know that $0<n_B<1$ and, more precisely, we expect to normally encounter values lying between $0.3$ and $0.8$. In contrast to the previous case, if we allow $n_B>1$ then the unstable equilibrium changes stability and the system becomes bistable giving rise to completely different dynamics. Focusing on the range suggested by Karma we find that the parameter $n_B$ determines the position of the wave back by controlling the speed of the slow subsystem. The higher $n_B<1$ the slower is the slow variable and therefore the longer is the depolarisation pulse (see Figure \ref{fig:SimnB}).\\
    \\
    \includegraphics[width=0.5\textwidth]{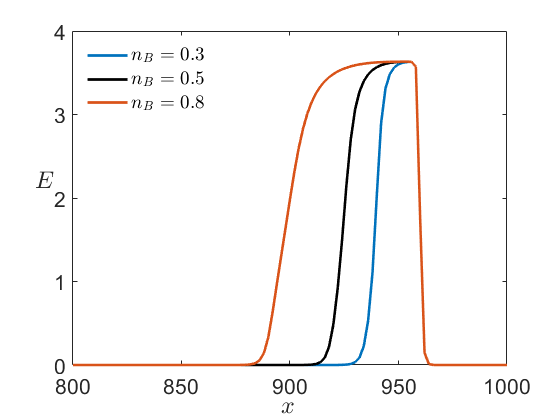}
    \includegraphics[width=0.5\textwidth]{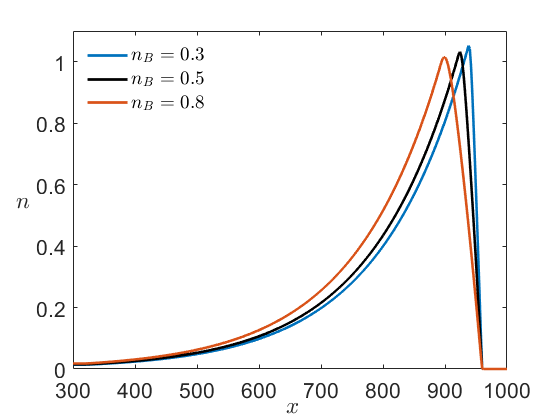}
    \captionof{figure}{Time snapshot of simulations of \eqref{eq:KarmaPDE} for multiple values of $n_B$.\\}\label{fig:SimnB}
    
    Last we can look at the effect of a small external current $I$ in the Karma model. From the analysis of the ODE model in Section \ref{sec:ODE} we know that for $I=I_0\approx0.087$ the system undergoes a saddle-node bifurcation so we cannot expect to have equivalent dynamics in the PDE case after crossing this point either. Nevertheless we want to compare the system for $I<I_0$ since we expect to be able to extend the analysis above in this range. In Figure \ref{fig:SimI} we see a time shot of the simulations for different values of $I$. At first sight we see that again the wave speed is changed where the higher the external current the faster the propagation speed of the wave. We can also see that the base line is no longer 0 but slightly higher approaching the fold point as $I\to I_0$ as we would expect. For $I=0.08$ we start being able to see that by increasing the base line we also get a weak hyperpolarization after the main pulse which we also would expect analytically due to the shape of the critical manifold.\\
    \\
    \includegraphics[width=0.5\textwidth]{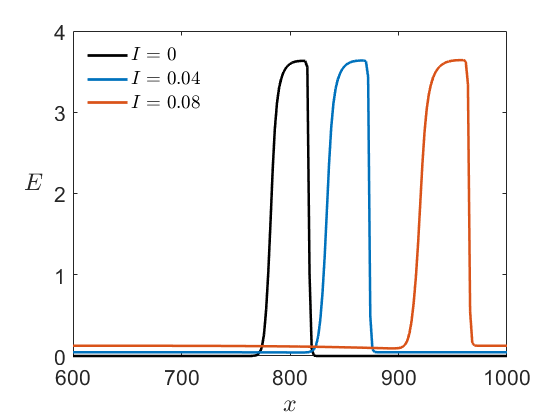}
    \includegraphics[width=0.5\textwidth]{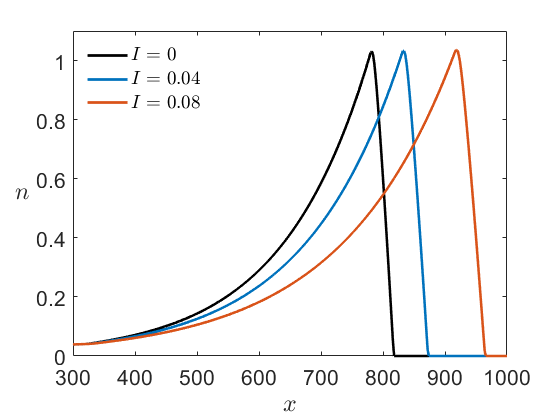}
    \captionof{figure}{Time shot of simulations of \eqref{eq:KarmaPDE} for multiple values of $I$.\\}\label{fig:SimI}
    
    Furthermore we can shift the waves such that the wave fronts coincide and see that the wave profile is also affected by the external current (see Figure \ref{fig:SimIprofile}). Although the effect is not as noticeable as the different wave speed we see that in addition to the higher base line we also have slightly longer pulses for higher incoming current.\\
    \\
    \includegraphics[width=0.5\textwidth]{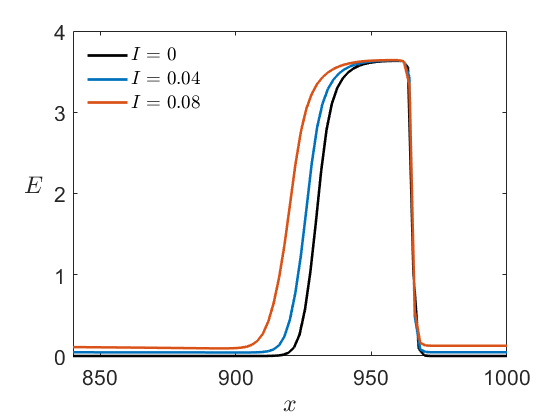}
    \includegraphics[width=0.5\textwidth]{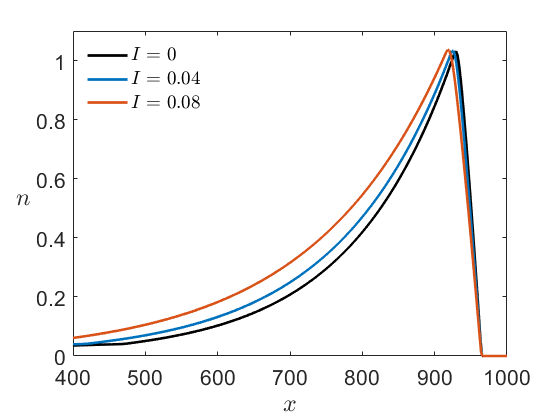}
    \captionof{figure}{Profile of the travelling pulses in the Karma model \eqref{eq:KarmaPDE} for multiple values of $I$.\\}\label{fig:SimIprofile}

\section{Discussion}
    We presented a systematic analysis and comparison of a polynomial version of the Karma model \cite{Karma93, Karma94} with the FHN model \cite{FitzHugh55, FitzHugh60, FitzHugh61} motivated by applications to modelling excitable behaviour in cardiomyocytes with regard to individual cells as well as cell populations. We started by considering their pure ODE versions. In this setting we noticed that Karma as well as FitzHugh-Nagumo present similar behaviours showing in both cases exactly three parameter regimes for the input current. When $I$ is sufficiently small the dynamics converge to a stable resting state while in the middle range of $I$ both models oscillate following a globally stable limit cycle. Finally, when $I$ is high enough any orbit converges to a stable equilibrium corresponding to a  depolarized state. Nevertheless, although both systems are qualitatively similar, there are also some likey important differences when applying them to model cardiomyocytes. First, in the Karma model the re-polarisation is much slower than the depolarisation because of the sharpness of the critical manifold while in the FHN model both processes are of the same order. Also, for a high external input $I$ the dynamics of the FHN model are still controlled by an ``S''-shaped critical manifold, in other words, depending on the initial conditions it is possible to undergo a depolarisation and re-polarisation before converging to the stable state. In contrast to that the Karma model does not allow any oscillation other than small fluctuations very close to the equilibrium given that in a reasonable regime for $\varepsilon$ the fixed point is a stable spiral. Yet, the biggest difference we see occurs when considering $I$ as a dynamic variable instead of a parameter. In the extended phase space we observe the high sensitivity to changes in $I$ when the system is oscillatory and most importantly the cusp singularity that arises when the two folds collide. Because of this differences it would be interesting for future work to look at the models with non-constant external current $I$.\\
    
    Next, we considered the spatially extended versions of the models and focused on travelling wave solutions in 1D without external current. This is motivated by our interest in modelling propagation of activity in populations of cardiomyocytes similar to \cite{Bernhard1, Bernhard2, Bernhard3}. We start by analysing the 1D PDE in the singular limit $\varepsilon = 0$ in order to study the existence of travelling wave solutions. Here, using similar techniques as used for FHN in \cite{Guckenheimer09} in addition to singular perturbation theory, we have demonstrated the existence of travelling pulses originating and converging to a fixed resting state. The first difference we have found comparing the Karma model to FHN is the insensitivity of the wave speed to different values of the slow variable. Furthermore, in contrast to FitzHugh-Nagumo, the wave back in the Karma model starts at the fold point for large parameter ranges resulting as in the ODE system in much slower re-polarisation than the previous depolarisation. All the analysis in this section has been restricted to $I=0$, therefore, as a future continuation of the work, it would be interesting to study if it is possible to extend the existence of travelling waves for $I>0$ and especially in the range where the ODE is oscillatory.\\
    
    Finally, we performed numerical simulations of the 1D PDE Karma model varying model parameters. As we would expect, the propagation velocity does not depend on the parameters controlling the reactivity of the cells but only on the parameters defining the medium, namely the diffusion coefficient $D$ and the background current $I$. On the other hand, while a change in $D$ or $I$ also affects the shape of the pulse we have observe that the main control over the shape is given by the reaction parameters $\varepsilon$, $M$ and $n_B$. Since all these are based only on observations of different simulations it would be another interesting avenue for future work to perform an even deeper analysis of the effect of the parameters on the travelling wave solutions.

\begin{appendices}


\section{Theory for multi-scale systems}\label{sec:theory}
	In this appendix we want to give an overview over the most important mathematical concepts and techniques used in this paper. We will start in Section \ref{sec:def} by introducing some definitions and basic results related to the analysis of dynamical systems and, in particular, those ruled by multiple time scales. For a more detailed introduction see \cite{Jones95, Kuehn15, Wiggins94}. Furthermore we will present in Section \ref{sec:fenichel} the singular perturbation theory developed by Fenichel in \cite{Fenichel71, Fenichel79} with its three key theorems and finally in Section \ref{sec:blowup} the blow-up technique is briefly introduced.
	
	\subsection{Definitions and notation}\label{sec:def}
		First, we introduce some important notions for the work with dynamical systems in general defined by the ODE
		\begin{equation}\label{eq:theory:ODE}
			\frac{dx}{dt}=\dot{x}=f(x), ~x=x(t)
		\end{equation}
		for $x:I\to\mathbb{R}^n$ with $I\subseteq \mathbb{R}$ an interval and some function $f\in C^r(\mathbb{R}^n,\mathbb{R}^n)$ ($r\geq 1$). We denote the flow induced by the differential equation \eqref{eq:theory:ODE} as $\phi_t(\cdot)$.
		
		\begin{definition} For a set $S\subset\mathbb{R}^n$ and a manifold $M\subset\mathbb{R}^n$ we have:
			\begin{itemize}
				\item $S$ is called \textbf{invariant} under the flow $\phi_t(\cdot)$ if $\phi_t(S)\subset S$ for all $t\in\mathbb{R}$.
				\item $S$ is called \textbf{positively invariant} if for all $p\in S$ it holds that $\phi_t(p)\in S$ for all $t\geq 0$.
				\item $S$ is called \textbf{negatively invariant} if for all $p\in S$ it holds that $\phi_{-t}(p)\in S$ for all $t\geq 0$.
				\item $M$ is called \textbf{locally invariant} under $\phi_t(\cdot)$ if for each $p\in\text{int}(M)$ there exists a time interval $I_p=(t_1,t_2)$ such that $0\in I_p$ and $\phi_t(p)\in M$ for all $t\in I_p$. In other words, the flow can only leave the manifold through its boundary.
			\end{itemize}
		\end{definition}
	
		We now continue with the systems showing multiple time scales. This is the case when some of the variables are much slower than others so we can identify two separate subsystems that move at different time scales.\\
		
		The setting we will be working with is a $(m,n)$-fast-slow system, this means we have an $m$-dimensional fast subsystem combined with $n$ further variables moving at a slower time scale. The complete system is given by the $(m+n)$-dimensional system of differential equations
		\begin{equation}\label{eq:theory:slow}
			\begin{aligned}
				\varepsilon\frac{dx}{dt}&=\varepsilon\dot{x}=f(x,y,\varepsilon)\\
				\frac{dy}{dt}&=\dot{y}=g(x,y,\varepsilon)
			\end{aligned}
		\end{equation}
		where $f\in C^r(\mathbb{R}^{m+n+1},\mathbb{R}^m)$ and $g\in C^r(\mathbb{R}^{m+n+1}, \mathbb{R}^n)$. For the equation \eqref{eq:theory:slow} the components of $x=x(t)\in\mathbb{R}^m$ are the \textbf{fast variables} and those of $y=y(t)\in\mathbb{R}^n$ are the \textbf{slow variables} of the system. The time scale separation is controlled by a small parameter $\varepsilon>0$, which provides the ratio between the \textbf{slow time scale} $t$ to the \textbf{fast time scale} $\tau :=t/\varepsilon$. Observe that \eqref{eq:theory:slow} describes the system with respect to $t$ and therefore we call it the \textbf{slow system}. If we write the system on the fast time scale $\tau=t/\varepsilon$, we obtain the \textbf{fast system} defined as
		\begin{equation}\label{eq:theory:fast}
			\begin{aligned}
				\frac{dx}{d\tau}&=x'=f(x,y,\varepsilon)\\
				\frac{dy}{d\tau}&=y'=\varepsilon g(x,y,\varepsilon).
			\end{aligned}
		\end{equation}
		
		When looking at fast-slow systems we are mostly interested in the case when $\varepsilon$ is very small and that brings up the question what happens at the limit $\varepsilon\to 0$ for a given system, the so-called \textbf{singular limit}. Although the slow and fast system \eqref{eq:theory:slow} and \eqref{eq:theory:fast} are equivalent since we only need a reparametrization of time to transform them into each other, their corresponding singular limit is not, so we have to differentiate between the \textbf{reduced system} or \textbf{slow subsystem}
		\begin{equation}\label{eq:theory:reduced}
			\begin{aligned}
				0&=f(x,y,0)\\
				\dot{y}&=g(x,y,0)
			\end{aligned}
		\end{equation}
		which we obtain taking the singular limit of the slow system \eqref{eq:theory:slow} and the \textbf{layer problem} or \textbf{fast subsystem}
		\begin{equation}\label{eq:theory:layer}
			\begin{aligned}
				x'&=f(x,y,0)\\
				y'&=0
			\end{aligned}
		\end{equation}
		as the limit for the fast system \eqref{eq:theory:fast}.
		The reduced system \eqref{eq:theory:reduced} consists of an algebraic constraint and the differential equation for the slow variables defining the so-called \textbf{reduced or slow flow}. In comparison, the layer problem is given by a differential equation for the fast variables where $y$ is a fixed parameter and the associated flow is called \textbf{fast flow}. The name arises as every value of $y$ defines an independent ``layer'' of the system.
		\begin{definition}
			The algebraic constraint in \eqref{eq:theory:reduced} defines the \textbf{critical manifold} $$C_0=\{(x,y)\in\mathbb{R}^{m+n}:f(x,y,0)=0\}$$
			The points contained in the critical manifold correspond exactly to the equilibrium points of the fast subsystem.
		\end{definition}
		
		\begin{remark}
			The equations \eqref{eq:theory:reduced} define the slow flow to be naturally restricted to the manifold $C_0$.
		\end{remark}
		
		In this setting we are able to decouple the fast and the slow dynamics in the system analysing them separately. Nevertheless, to obtain a global picture of the full system we need to combine the fast and the slow trajectories and we get the following definition.
		
		\begin{definition}
		    A \textbf{candidate orbit} is the image of a homeomorphism $\gamma:(a,b)\to\mathbb{R}^{m+n}$ with $a<b$ and a partition $a=t_0<t_1<\dots<t_k=b$ for some $k\in\mathbb{N}^+$ such that
		    \begin{itemize}
		        \item the image $\gamma((t_{i-1},t_i))$, $i\in\{1,\dots, k\}$ of each subinterval is a trajectory of either the fast or the slow subsystem
		        \item the image $\gamma((a,b))$ has an orientation that is consistent with the orientation of each trajectory $\gamma((t_{i-1},t_i))$, $i\in\{1,\dots, k\}$
		    \end{itemize}
		\end{definition}
	
	\subsection{Fenichel's Theory}\label{sec:fenichel}
	
		The following statements were first introduced by Fenichel in his paper ``\textit{Persistence and smoothness of invariant manifolds for flows}'' 1971 (\cite{Fenichel71}) and then applied to fast-slow systems 1979 in ``\textit{Geometric singular perturbation theory for ordinary differential equations}'' (\cite{Fenichel79}). Fenichel's work consists of three main theorems posed in a very general setting. Since we will not need this generality, we will only present the important results already applied to fast-slow systems. We will not prove any of the statements in this section, for the proofs see \cite{Fenichel71, Fenichel79, Kuehn15, Wiggins94}.\\
		
		To be able to understand the next theorem we first need some more properties of the critical manifold.
		
		\begin{definition}
			A subset $S\subset C_0$ is called \textbf{normally hyperbolic} if the Jacobian with respect to the fast variables $D_xf(x^*,y^*,0)\in\mathbb{R}^{m\times m}$ has no eigenvalue with zero real part for all $(x^*,y^*,0)\in S$.
		\end{definition}
		
		\begin{remark}
			The definition shows that $S\subset C_0$ is normally hyperbolic if and only if for every $(x^*,y^*,0)\in S$ it holds that $x^*$ is a hyperbolic equilibrium of the fast subsystem for $y=y^*$ i.e. $x^*$ is a hyperbolic equilibrium of $x'=f(x,y^*,0)$.
		\end{remark}
	
		Using this correspondence between fixed points of the layer problem and points of the critical manifold we can now analyse the stability properties of those equilibria and define the analogous concept for $C_0$.
		
		\begin{definition} Let $S\subset C_0$ be a normally hyperbolic set.
			\begin{itemize}
				\item $S$ is called \textbf{attracting} if for all $(x^*,y^*,0)\in S$ every eigenvalue of $D_xf(x^*,y^*,0)$ has negative real part i.e. for all $(x^*,y^*,0)\in S$ the corresponding equilibrium $x^*$ of the fast subsystem is stable for $y=y^*$.
				\item Similarly, $S$ is called \textbf{repelling} if all eigenvalues have positive real part i.e. the fixed points are unstable.
				\item If $S$ is neither attracting nor repelling, it is called \textbf{of saddle type}.
			\end{itemize}
		\end{definition}
		
		Finally, to measure the distance of the perturbed manifolds we will use the following metric.
		
		\begin{definition}
			The \textbf{Hausdorff distance} $d_H$ between to nonempty sets $V,W\subset\mathbb{R}^{k}$ is defined by
			$$d_H(V,W):=\max\left\{\sup_{v\in V} \text{dist}(v,W),~\sup_{w\in W}\text{dist}(w,V) \right\}$$
			where $\text{dist}(p,M):=\inf_{q\in M}||p-q||$ gives us the distance from a point $p\in\mathbb{R}^{k}$ to the set $M\subset\mathbb{R}^{k}$. In other words, the Hausdorff distance $d_H(V,W)$ defines the maximal distance between a random point in one set to the other set.
		\end{definition}

		\begin{theorem}[Fenichel's first Theorem, fast-slow version]\label{th:fenichel1}
			Let $S_0$ be a compact normally hyperbolic submanifold of the critical manifold $C_0$ of \eqref{eq:theory:slow} and $f\in C^r(\mathbb{R}^{m+n+1},\mathbb{R}^m),~g\in C^r(\mathbb{R}^{m+n+1},\mathbb{R}^n)$ for $1\leq r<\infty$. Then for $\varepsilon>0$ sufficiently small it holds that
			\begin{enumerate}[(F1)]
				\item There exists a locally invariant manifold $S_\varepsilon$ diffeomorphic to $S_0$,
				\item $S_\varepsilon$ has Hausdorff distance $\mathcal{O}(\varepsilon)$ from $S_0$,
				\item The flow on $S_\varepsilon$ converges to the slow flow as $\varepsilon\rightarrow 0$,
				\item $S_\varepsilon$ is $C^r$-smooth.
			\end{enumerate}
		\end{theorem}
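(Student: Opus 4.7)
The plan is to deduce this fast-slow statement from the general persistence theorem for compact normally hyperbolic invariant manifolds proved in \cite{Fenichel71}. I view the fast system \eqref{eq:theory:fast} as a $C^r$-smooth one-parameter family of vector fields on $\mathbb{R}^{m+n}$, whose $\varepsilon = 0$ member is the layer problem \eqref{eq:theory:layer}; call this vector field $X_0$. The compact manifold $S_0 \subset C_0$ is an invariant submanifold of equilibria of $X_0$, so the first step is to verify that it is normally hyperbolic in the sense required by \cite{Fenichel71}.

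At a point $p = (x^*,y^*) \in S_0$ the linearization of $X_0$ is block upper-triangular,
\begin{equation*}
DX_0(p) = \begin{pmatrix} D_x f(p,0) & D_y f(p,0) \\ 0 & 0 \end{pmatrix},
\end{equation*}
and invertibility of $D_x f(p,0)$, implied by the normal hyperbolicity assumption on $S_0$ in the layer-problem sense, lets the implicit function theorem realize $S_0$ locally as a graph $x = h_0(y)$; a short computation then identifies $T_p S_0$ with $\ker DX_0(p)$, both of dimension $n$. The fast factor $\mathbb{R}^m \times \{0\}$ is an invariant complement on which $DX_0(p)$ acts as $D_x f(p,0)$, whose eigenvalues have nonzero real part. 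This is exactly the hypothesis of the persistence theorem of \cite{Fenichel71}.

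That theorem applied to \eqref{eq:theory:fast} as a $C^r$-small perturbation of $X_0$ produces, for each sufficiently small $\varepsilon > 0$, a locally invariant $C^r$ manifold $S_\varepsilon$ diffeomorphic to $S_0$ and depending smoothly on $\varepsilon$ in a tubular neighborhood of $S_0$. Because the orbits of \eqref{eq:theory:fast} agree with those of \eqref{eq:theory:slow} up to the reparametrization $\tau = t/\varepsilon$, local invariance transfers to the slow system, which delivers (F1) and (F4). Writing $S_\varepsilon$ as a graph $x = h_\varepsilon(y)$, smoothness in $\varepsilon$ together with $h_0$ parametrizing $S_0$ gives $h_\varepsilon = h_0 + \mathcal{O}(\varepsilon)$ uniformly on the compact base, which is (F2). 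Finally, (F3) follows because on $S_\varepsilon$ the slow equation reduces to $\dot y = g(h_\varepsilon(y), y, \varepsilon)$, converging uniformly on compact sets to the reduced flow $\dot y = g(h_0(y), y, 0)$ as $\varepsilon \to 0$.

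The main obstacle is the persistence theorem of \cite{Fenichel71} itself. Its standard proof realizes $S_\varepsilon$ as the graph of a fixed point of the graph transform acting on a Banach space of Lipschitz sections of the normal bundle of $S_0$; the contraction property rests on the spectral gap between the tangential dynamics on $S_0$, which is trivial on a manifold of equilibria, and the transverse fast dynamics, whose rates are bounded away from zero by normal hyperbolicity. Upgrading the Lipschitz fixed point to $C^r$-regularity uses the fiber-contraction principle, whose spectral gap condition is satisfied automatically here because the tangential rate vanishes, so $C^r$ persistence holds for every finite $r$. Once this infrastructure is granted, the verifications described above reduce to elementary linear algebra, the implicit function theorem, and a first-order Taylor expansion in $\varepsilon$.
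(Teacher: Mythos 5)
The paper does not prove this theorem at all: it is stated as background in Appendix \ref{sec:fenichel}, and the authors explicitly write that they ``will not prove any of the statements in this section,'' deferring to \cite{Fenichel71, Fenichel79, Kuehn15, Wiggins94}. So there is no in-paper argument to compare against; your proposal has to be judged against the standard literature derivation, and it is essentially that derivation: view the layer problem as the $\varepsilon=0$ member of a $C^r$ family, observe that a manifold of equilibria is normally hyperbolic in the classical invariant-manifold sense precisely because the tangential dynamics are trivial (so the spectral-gap and Lyapunov-type ratio conditions hold for every finite $r$), and invoke the general persistence theorem. Your remarks on why $C^r$ smoothness holds for all finite $r$, and on recovering (F3) from the graph representation, are correct.

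Two points need more care before this is a complete reduction. First, $S_0$ is a \emph{compact} submanifold of $C_0$, in general with boundary, and the persistence theorem of \cite{Fenichel71} in the form you quote applies to compact boundaryless (or overflowing/inflowing invariant) manifolds. Under the perturbation the slow drift pushes orbits through $\partial S_0$, so $S_0$ is not invariant for $X_\varepsilon$ in any global sense; the standard fix is to modify $f$ and $g$ outside a neighbourhood of $S_0$ (or to arrange overflowing invariance) and this is exactly why the conclusion (F1) is only \emph{local} invariance. Your write-up does not address the boundary at all. Second, your derivation of (F2) rests on ``$S_\varepsilon$ depending smoothly on $\varepsilon$,'' which is itself part of what must be proved; the usual way to obtain it is to append the trivial equation $\varepsilon'=0$ and apply the invariant-manifold theorem to the extended $(x,y,\varepsilon)$ system, so that the graph $h$ is jointly $C^r$ in $(y,\varepsilon)$ and the $\mathcal{O}(\varepsilon)$ Hausdorff estimate follows from a Taylor expansion at $\varepsilon=0$. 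Neither issue invalidates the approach --- both are handled in \cite{Fenichel79} and \cite{Wiggins94} --- but as written they are gaps in the argument rather than routine omissions.
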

	
		\begin{definition}
			The manifold $S_\varepsilon$ obtained as conclusion of Theorem \ref{th:fenichel1} is called a \textbf{slow manifold}.
		\end{definition}
	
		\begin{remark}
			$S_\varepsilon$ is usually not unique. Nevertheless, in regions lying at a fixed distance from $\partial S_\varepsilon$, all manifolds satisfying \textit{(F1)-(F4)} lie at a Hausdorff distance $\mathcal{O}(e^{-K/\varepsilon})$ from each other for some positive $K\in\mathcal{O}(1)$. For this reason, a representative of the manifolds is often called ``the'' slow manifold since, in most cases, it is arbitrary which representative to choose.
		\end{remark}
		
		By Theorem \ref{th:fenichel1} we know that, starting with a fast-slow system in the singular limit, if we perturb the equations by taking $\varepsilon>0$ sufficiently small the structure and behaviour of the critical manifold do not disappear. Instead, any compact subset of the manifold perturbs continuously to a slow manifold $S_\varepsilon$. The perturbation does not only preserve the topological structure of the critical manifold but the flow on the slow manifold is also $\varepsilon$-close to the original slow flow.

		\begin{theorem}[Fenichel's second Theorem] \label{th:fenichel2}
			Given the setting as in Theorem \ref{th:fenichel1}, the statements \textit{(F1)-(F4)} hold for the local stable and unstable manifolds if we replace $S_0$ and $S_\varepsilon$ by $W^{s,u}_{loc}(S_0)$ and $W^{s,u}_{loc}(S_\varepsilon)$ with
			\begin{equation*}
				W^{s,u}_{loc}(S_0)=\bigcup_{p\in S_0}W^{s,u}_{loc}(p)
			\end{equation*}
			In particular, $W^{s,u}_{loc}(S_\varepsilon)$ exist and furthermore $S_\varepsilon$ is normally hyperbolic and has the same stability properties with respect to the fast variables as $S_0$ (attracting, repelling or of saddle type).
		\end{theorem}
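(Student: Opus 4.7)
The plan is to build the perturbed stable and unstable fiber bundles on top of the slow manifold $S_\varepsilon$ already supplied by Theorem \ref{th:fenichel1}. For $\varepsilon=0$ each fiber $W^{s,u}_{loc}(p)$ over a point $p=(x^*,y^*,0)\in S_0$ is the local stable or unstable manifold of the hyperbolic equilibrium $x^*$ of the frozen layer problem $x'=f(x,y^*,0)$, whose existence follows from the classical stable manifold theorem because normal hyperbolicity provides a spectral gap in $D_xf(x^*,y^*,0)$. Taking the union over $p\in S_0$ assembles these into the unperturbed bundle $W^{s,u}_{loc}(S_0)$, which is a $C^r$ bundle by smooth dependence on parameters.

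First, I would establish a continuous invariant splitting of the tangent bundle along $S_\varepsilon$. At every $p\in S_0$ normal hyperbolicity yields a decomposition $T_p\mathbb{R}^{m+n}=T_pS_0\oplus E^s(p)\oplus E^u(p)$, where $E^{s,u}(p)$ are the stable and unstable eigenspaces of $D_xf(x^*,y^*,0)$. Because this splitting has a strictly positive spectral gap, it is stable under $C^r$-small perturbations of the flow, so for $\varepsilon>0$ sufficiently small the linearization of the full system along $S_\varepsilon$ still admits an analogous continuous splitting $TS_\varepsilon\oplus E^s_\varepsilon\oplus E^u_\varepsilon$ of the correct dimensions.

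Second, I would construct $W^{s,u}_{loc}(S_\varepsilon)$ fiberwise as graphs of Lipschitz maps defined on $E^{s,u}_\varepsilon$ and valued in the complementary directions, using a Hadamard-type graph transform, or equivalently the Lyapunov--Perron integral equation formulation. The persistent spectral gap ensures that the graph transform acts as a contraction on a suitable Banach space of admissible graphs, yielding a unique fixed point which defines the perturbed fiber bundle. The Hausdorff estimate $\mathcal{O}(\varepsilon)$ in (F2), convergence of the induced flow in (F3), and diffeomorphism in (F1) follow from the uniform continuity of the construction in $\varepsilon$, and the smoothness claim (F4) follows by iterating the same contraction in $C^r$ norms.

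The main obstacle I expect is precisely this last $C^r$-smoothness step: one has to balance the loss of derivatives incurred by the graph transform against the spectral gap, which is the technical heart of the Hirsch--Pugh--Shub type approach and is what forces the requirement $r<\infty$ under only finite spectral separation. Once the bundles are in hand, the final assertion that $S_\varepsilon$ is normally hyperbolic with the same attracting, repelling, or saddle character as $S_0$ is immediate, since $E^s_\varepsilon$ and $E^u_\varepsilon$ are by construction tangent to $W^s_{loc}(S_\varepsilon)$ and $W^u_{loc}(S_\varepsilon)$ at points of $S_\varepsilon$, and their dimensions and spectral signs agree with those of $E^s$ and $E^u$ by continuity of the splitting in $\varepsilon$.
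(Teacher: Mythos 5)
The paper does not prove this statement at all: Theorems \ref{th:fenichel1}--\ref{th:fenichel3} are quoted as background, and the text explicitly defers all proofs to the cited references (Fenichel, Wiggins, Kuehn). So there is no in-paper argument to compare against; what you have written is a sketch of the standard Hirsch--Pugh--Shub/Fenichel strategy found in those references, and as such it is essentially the right outline: assemble the unperturbed bundle from the layer-problem stable/unstable manifolds, persist the invariant splitting, and build the perturbed bundle as the fixed point of a graph transform or Lyapunov--Perron operator. Two points deserve more care. First, the splitting $T_p\mathbb{R}^{m+n}=T_pS_0\oplus E^s(p)\oplus E^u(p)$ is normally hyperbolic only relative to the \emph{tangential} dynamics, which at $\varepsilon=0$ is trivial (every point of $C_0$ is an equilibrium of the layer problem) and for $\varepsilon>0$ is $\mathcal{O}(\varepsilon)$; the persistence of the splitting rests on the normal contraction/expansion rates dominating these slow tangential rates, not merely on the perturbation being $C^r$-small, and your sketch should say so since this is exactly what distinguishes the fast--slow version from the general invariant-manifold theorem. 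Second, and relatedly, your attribution of the hypothesis $r<\infty$ to a ``finite spectral separation'' is misleading here: because the tangential rates vanish as $\varepsilon\to 0$, the gap ratio becomes unbounded and the usual $r$-normal-hyperbolicity obstruction is absent for every finite $r$; the restriction $r<\infty$ is a uniformity issue, not a gap issue. Neither point invalidates the sketch, but both are where a careful write-up would have to do real work.
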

		
		Although the stable and unstable manifolds can only be defined for an equilibrium , with the help of Fenichel's second Theorem we are able to generalize this notion to fast-slow systems with $\varepsilon$ small enough. The stable and unstable manifolds $W^{s,u}_{loc}(S_0)$ that result from the union of those of the individual fixed points of the fast subsystem are not lost by the perturbation but instead we find that the topological as well as the analytical properties in new manifolds $W^{s,u}_{loc}(S_\varepsilon)$ remain similar to those of the original ones.
		
		\begin{theorem}[Fenichel's third Theorem]\label{th:fenichel3}
			We start again with the same setting as in Theorem \ref{th:fenichel1}. Then there exists a manifold $\mathcal{F}^u(p)$ for each $p\in S_0$ such that
			\begin{enumerate}[(a)]
				\item $\bigcup_{p\in S_0}\mathcal{F}^u(p)=W^u_{loc}(S_0)$,
				\item For $p\neq p'$ it holds that $\mathcal{F}^u(p)\cap\mathcal{F}^u(p')=\emptyset$,
				\item $\phi_{-t}(\mathcal{F}^u(p))\subseteq\mathcal{F}^u(\phi_{-t}(p))$,
				\item $\mathcal{F}^u(p)$ is tangent to $\mathcal{N}^u_p$ at $p$ with $\mathcal{N}_p^u$ the unstable component of the normal direction to $S_0$ (fast direction),
				\item There exist constants $C_u,\lambda_u>0$ such that if $q\in\mathcal{F}^u(p)$, then
				\begin{equation*}
					||\phi_{-t}(p)-\phi_{-t}(q)||<C_ue^{-\lambda_ut}
				\end{equation*}
				for every $t\geq 0$,
				\item $\mathcal{F}^u(p)$ is $C^r$ with respect to the base point $p$.
			\end{enumerate}
			The same conclusions \textit{(a)-(f)} with the obvious modifications hold for the family of manifolds $\mathcal{F}^s(p)$, e.g. replace $-t$ by $t$ in \textit{(c)} so that $\phi_t(\mathcal{F}^s(p))\subseteq \mathcal{F}^s(\phi_t(p))$. Furthermore, the foliation persists for $\varepsilon>0$ with all properties mentioned above and diffeomorphic to the foliation in the singular limit.
		\end{theorem}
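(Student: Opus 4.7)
The plan is to construct the foliation first in the singular limit $\varepsilon=0$, where the layer structure reduces everything to classical stable/unstable manifold theory for equilibria, and then to transport it to the perturbed regime $\varepsilon>0$ via the invariant-section theorem of Hirsch-Pugh-Shub applied to the slow manifold $S_\varepsilon$ already supplied by Theorems \ref{th:fenichel1} and \ref{th:fenichel2}.

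In the singular limit, the property $y'=0$ makes every layer $\{y=y^*\}$ invariant under the fast flow, and normal hyperbolicity of $S_0$ identifies each $p=(x^*,y^*)\in S_0$ with a hyperbolic equilibrium of the fast subsystem restricted to that layer. Standard local unstable manifold theory for hyperbolic equilibria then produces $\mathcal{F}^u(p)$ inside the layer, and immediately yields (a), (b), (d) and the exponential estimate (e), the latter with $\lambda_u$ any constant strictly smaller than the minimum positive real part of the spectrum of $D_xf(x^*,y^*,0)$ taken over the compact set $S_0$, and $C_u$ controlled by a uniform bound on the linearization. Property (c) is immediate because the fast flow leaves each layer invariant. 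Smoothness in the base point (f) is the classical $C^r$ dependence of the unstable manifold of an equilibrium on parameters, here the slow coordinate $y^*$. The analogous construction yields $\mathcal{F}^s(p)$.

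For $\varepsilon>0$, by Theorem \ref{th:fenichel2} the slow manifold $S_\varepsilon$ is normally hyperbolic with the same index as $S_0$, so its tangent bundle in an ambient neighbourhood splits into the tangent bundle of $S_\varepsilon$ plus normal stable and unstable sub-bundles that are $\mathcal{O}(\varepsilon)$-close to the fast stable and unstable eigenspaces of the layer problem. Each fiber $\mathcal{F}^{u}(p)$ is then written as the graph of a continuous section over the unstable normal sub-bundle and produced as the unique fixed point of the graph transform associated to the time-$\tau$ map of the fast system \eqref{eq:theory:fast}. The required contractivity follows from the spectral gap between the tangential dynamics on $S_\varepsilon$, which have rate $\mathcal{O}(\varepsilon)$, and the normal unstable dynamics, which have rate $\mathcal{O}(1)$; this gap is arbitrarily wide for $\varepsilon$ small. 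Properties (a) and (b) follow from uniqueness of the fixed point, (c) from its invariance under the graph transform, and (d) from tangency of the graph to the unstable sub-bundle at the base point. The diffeomorphism with the singular foliation comes from continuous dependence of the contraction on $\varepsilon$ and the implicit function theorem.

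The main obstacle is property (f), the $C^r$ smoothness of $\mathcal{F}^u(p)$ in the base point $p$. To obtain it one differentiates the graph transform in $p$ and checks that the induced operator on $k$-jets remains a contraction in fiber-weighted $C^k$ norms for every $k=1,\dots,r$. This demands the sharper spectral-gap condition $\lambda_u>k\,\mu$, where $\mu$ bounds the tangential expansion along $S_\varepsilon$; since $\mu=\mathcal{O}(\varepsilon)$ while $\lambda_u=\mathcal{O}(1)$, the condition holds automatically once $\varepsilon$ is small enough, and this is precisely the smallness quantifier appearing in the statement. The exponential estimate (e) with the same $\lambda_u$ survives the perturbation because the rate is inherited from the normal unstable bundle, whose spectrum is $\mathcal{O}(\varepsilon)$-close to that of $D_xf(x^*,y^*,0)$. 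The construction for $\mathcal{F}^s(p)$ is identical after reversing time.
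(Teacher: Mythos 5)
The paper does not prove this theorem at all: it is stated in the appendix as a classical result, with the explicit remark ``We will not prove any of the statements in this section'' and a pointer to \cite{Fenichel71, Fenichel79, Kuehn15, Wiggins94}. So there is no in-paper argument to compare against; your proposal has to be judged against the standard literature proof, and it is in fact a faithful sketch of that proof. The two-stage structure (explicit foliation in the singular limit from classical unstable-manifold theory layer by layer, then persistence for $\varepsilon>0$ via a graph-transform/invariant-section argument exploiting the $\mathcal{O}(1)$ versus $\mathcal{O}(\varepsilon)$ spectral gap, with the sharper gap condition $\lambda_u>k\mu$ needed for $C^k$ regularity of the fibration in the base point) is precisely how Fenichel and the later expositions proceed, and you correctly identify the smoothness of the fibers in the base point as the delicate step.

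Two points where your sketch is imprecise, though neither is fatal. First, the graph transform does not fix each fiber $\mathcal{F}^u(p)$ individually; it maps the candidate fiber over $p$ to a candidate fiber over $\phi_{-\tau}(p)$, so the contraction argument must be run on the whole family (a section of a bundle over $W^u_{loc}(S_\varepsilon)$, or equivalently on candidate partitions of $W^u_{loc}(S_\varepsilon)$), and equivariance (c) is then what the fixed point of that family-level map gives you. Second, properties (a) and (b) for $\varepsilon>0$ are most cleanly obtained by constructing the foliation as the level sets of an asymptotic-phase projection $\pi:W^u_{loc}(S_\varepsilon)\to S_\varepsilon$ assigning to each point the unique base point it shadows in backward time; ``uniqueness of the fixed point'' alone does not immediately yield that the fibers cover $W^u_{loc}(S_\varepsilon)$ and partition it. With those adjustments your outline matches the proofs in the references the paper cites.
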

	
		\begin{definition}
			The manifolds $\mathcal{F}^{s,u}(p)$ are called the \textbf{stable/unstable fibres} through $p$. 
		\end{definition}
	
		The families $\mathcal{F}^{s,u}$ build decompositions of the stable/unstable manifolds by submanifolds characterised by initial conditions approaching each other at the fastest rate in forward/backward time. They are therefore called \textbf{asymptotic rate foliation}. Since this foliation persists under perturbations we can conclude that the asymptotic behaviour on the stable and unstable manifolds stays unchanged for positive $\varepsilon$.
		
		
		These theorems are often referred to as \textbf{geometric singular perturbation theory} or \textbf{GSPT} as we do in this paper. Nevertheless, this term can also describe a wider compilation of geometric techniques for the analysis of singularly perturbed systems and consequently it is important to clarify if it refers to a specific result or to a hole branch of methods.
		
	\subsection{Geometric blow-up}\label{sec:blowup}
	    While Fenichel's Theory gives us the tools to analyse the critical manifold and its stable and unstable manifold whenever it is normally hyperbolic, the method of geometric blow-up enables us to investigate the points where the hyperbolicity is lost. The most common example are fold points as we see multiple times in this paper but also i.e. other types of bifurcation points like the cusp bifurcation appearing in Section \ref{sec:KarmaI}. This section aims at giving a basic understanding of the idea behind blow-up in general and applied to fold points in particular. It is not thought as a deep or complete study of this matter, see \cite{Kuehn15} for a more extensive introduction or the original papers \cite{Broer13, Dumortier78, Dumortier93, Krupa01a} for detailed statements and proofs.\\
	    
	    The method of geometric blow-up was first introduced in \cite{Dumortier78, Dumortier93} independently of systems with multiple time scale dynamics. The most simple case is given for a planar vector field $\dot z=F(z)\in\mathbb{R}^2$ as follows
	    
	    \begin{definition}
	        Let $S^1$ be the unit circle with the corresponding polar coordinate transformation
	        \begin{equation*}
	            \phi:S^1\times I\to\mathbb{R}^2~,~~~~\phi(\theta,r)=(r\cos\theta,r\sin\theta)
	        \end{equation*}
	        for some (possibly infinite) interval $I\subset\mathbb{R}$ with $0\in I$ and $\theta\in[0,2\phi)$ as parametrization of $S^1$. Then the \textbf{polar blow-up} $\hat F$ of a vector field $F\in C^\infty(\mathbb{R}^2)$ with $F(0)=0$ is defined by
	        \begin{equation}\label{eq:blowup}
	            \hat F(\theta,r):=(D\phi^{-1}_{(\phi,r)}\circ F\circ \phi)(\theta,r)
	        \end{equation}
	        for $r\neq0$ and by the continuous extension of \eqref{eq:blowup} to $r=0$.
	    \end{definition}
	    
	    \begin{remark}
	        The name polar blow-up clearly arises from the polar coordinate transformation used for $r>0$. Since the blow-up method can also be performed with a different coordinate transformation (see \textbf{directional blow-up} introduced in \cite{Kuehn15}), it serves as specification of the coordinate transformation that has been applied. Nevertheless, one can proof that both methods of blow-up, polar and directional, are equivalent up to a coordinate transformation.
	    \end{remark}
	    
	    \begin{definition}
	        Let $F$ be a $C^\infty$ vector field as above. We define the \textbf{(rescaled) polar blow-up} as
	        \begin{equation*}
	            \bar F:=\frac{1}{r^k}\hat F
	        \end{equation*}
	        with $k$ such that the derivatives of $F$ satisfy $D^k F=0$ and $D^{k+1} F\neq 0$. It is important to notice that this scaling does not change the qualitative structure of the orbits when $r>0$.
	    \end{definition}
	    
	    To illustrate the idea behind the blow-up method we present a simple planar example that can be desingularized using a polar blow-up transformation.
	    
	    \begin{example}
	        We consider the system
	        \begin{equation*}
	            \dot z=\begin{pmatrix}\dot{z_1}\\\dot{z_2}\end{pmatrix} =\begin{pmatrix}z_1^2-2z_1z_2\\z_2^2-2z_1z_2\end{pmatrix}=F(z_1,z_2)
	        \end{equation*}
	        It holds that $F(0,0)=0$ as well as 
	        \begin{equation*}
	            DF_{(0,0)}=\left.\begin{pmatrix}2z_1-2z_2&-2z_1\\-2z_2&2z_2-2z_1\end{pmatrix}\right|_{(0,0)}=\begin{pmatrix}0&0\\0&0\end{pmatrix}
	        \end{equation*}
	        so we see that the origin is a non-hyperbolic equilibrium. We now want to apply a blow-up transformation to construct a vector field with hyperbolic equilibria instead. The polar blow-up vector field is given by
	        \begin{equation*}
	            \hat F(\theta,r)=\begin{pmatrix}3r\cos\theta\sin\theta(\sin\theta-\cos\theta)\\\frac{1}{4}r^2(\cos\theta+3\cos(3\theta)+\sin\theta-3\sin(3\theta))\end{pmatrix}
	        \end{equation*}
	        so we can define the rescaled polar blow-up by $\bar F(\theta,r)=\frac{1}{r}\hat F(\theta,r)$. In this new vector field we find 6 hyperbolic equilibria on the circle $S^1\times{r=0}$ so we are now able to use the linear structure to describe the dynamics for $r$ small. Since the vector fields outside of $r=0$ are conjugated can use this information to describe qualitatively the structure of the orbits close to the origin in the original vector field.\\
	        \includegraphics[width=0.5\textwidth]{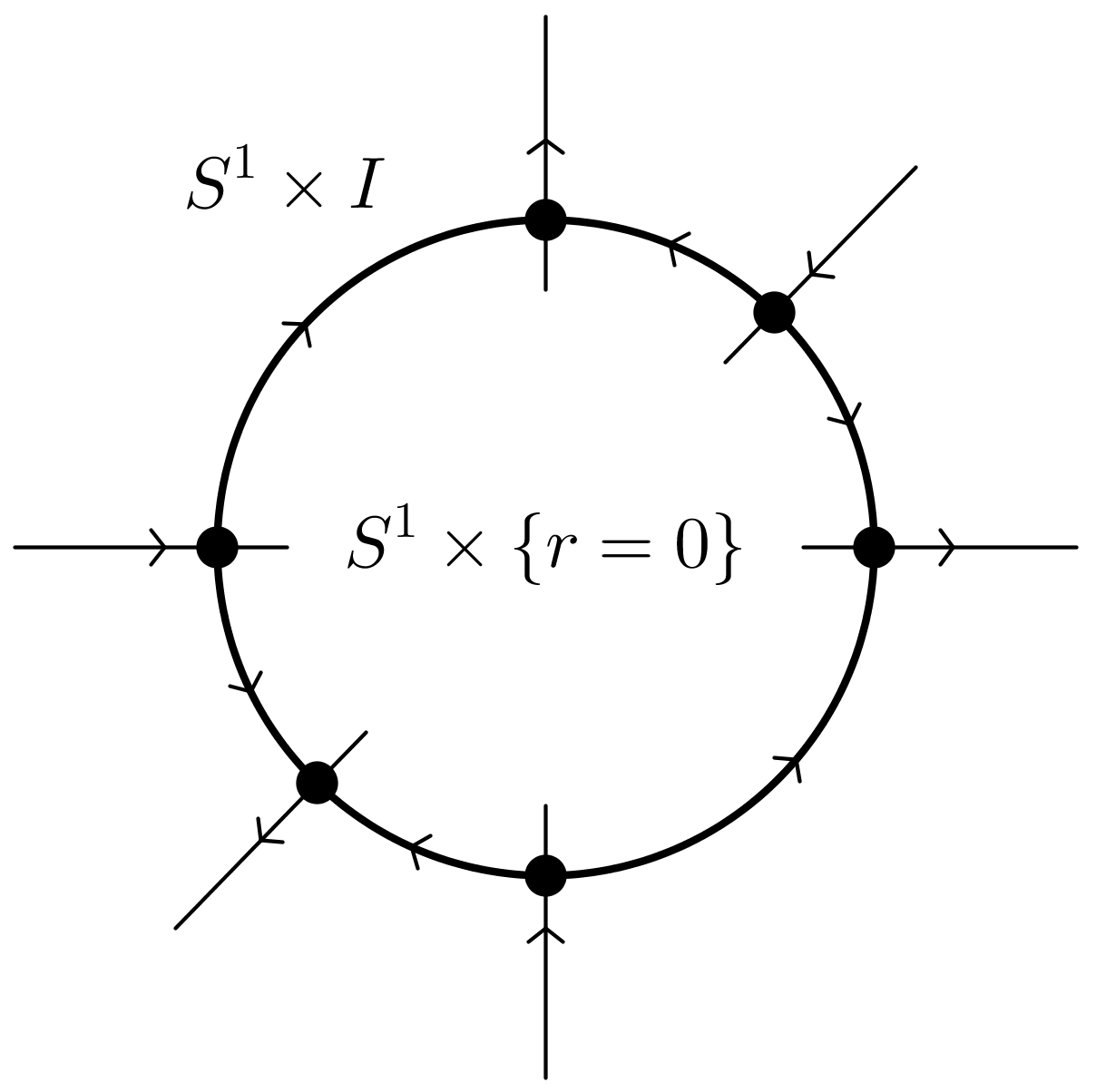}\hspace*{5mm}
	        \includegraphics[width=0.5\textwidth]{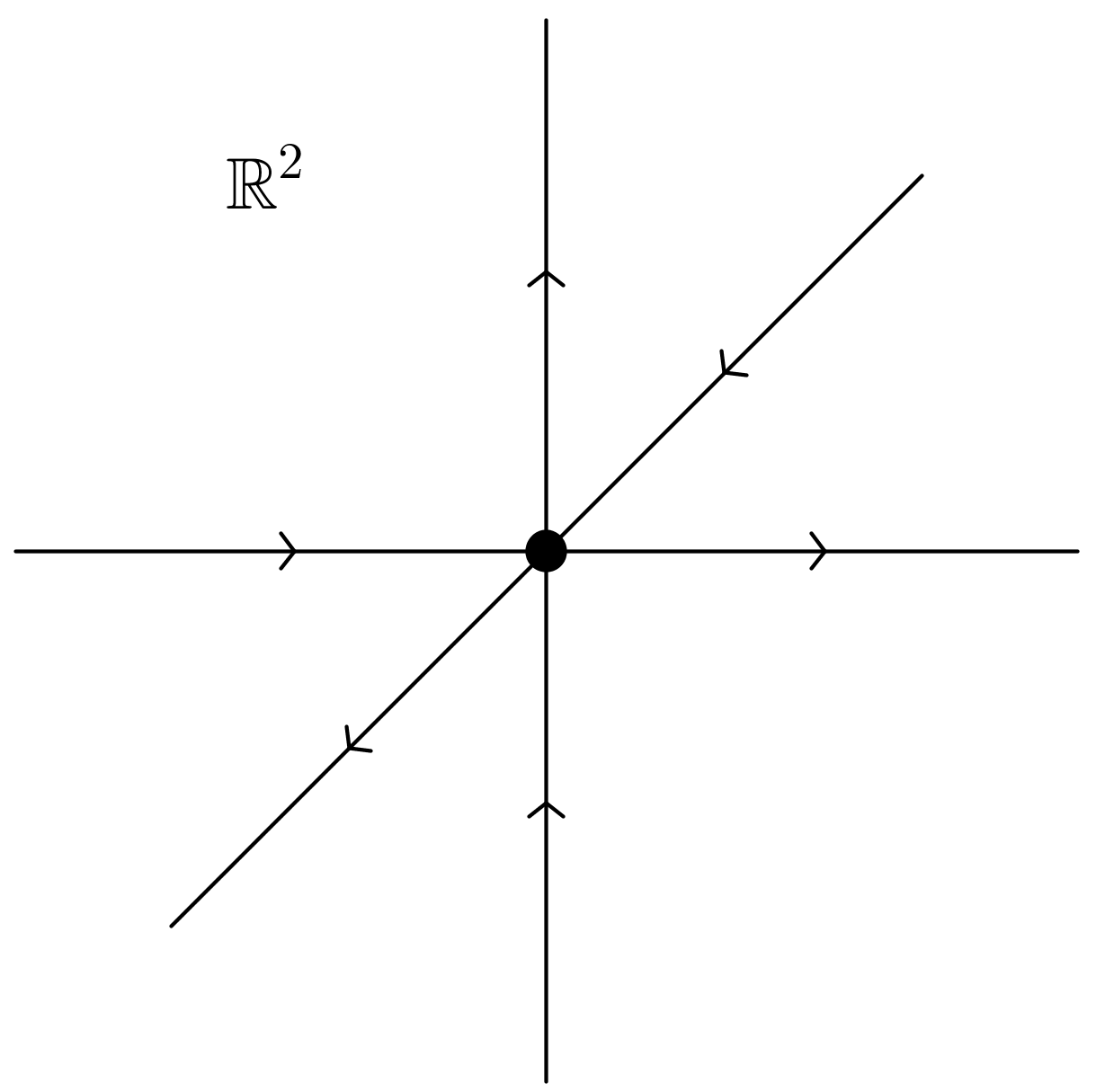}
	        \captionof{figure}{Blown-up vector field $\bar F$ (left) and original vector field $F$ (right). We see that we have transformed 1 non-hyperbolic equilibrium into a ring with 6 hyperbolic ones.}
	    \end{example}
	    
        We can generalize the definition above by allowing vector fields $F\in C^\infty(\mathbb{R}^n)$ in higher dimensions with the condition $F(0)=0$ unchanged.
	    
	    \begin{definition}
	        Let $S^{n-1}$ be the $(n-1)$-dimensional unit sphere in $\mathbb{R}^n$, then we consider the generalised polar coordinate transformation
	        \begin{equation*}
	            \phi:S^{n-1}\times I\to\mathbb{R}^n~,~~~~\phi(\bar z_1,\dots,\bar z_n,r)=(r\bar z_1,\dots,r\bar z_n)
	        \end{equation*}
	        where $\sum_{k=1}^n\bar z_k^2=1$ and some (possibly infinite) interval $I\subset\mathbb{R}$ with $0\in I$. Then we define the \textbf{polar blow-up} $\hat F$ of the vector field $F$ analogously to the case $n=2$ by
	        \begin{equation}\label{eq:blowup_n}
	            \hat F(\bar z_1,\dots,\bar z_n,r):=(D\phi^{-1}_{(\bar z_1,\dots,\bar z_n,r)}\circ F\circ \phi)(\bar z_1,\dots,\bar z_n,r)
	        \end{equation}
	        when $r\neq 0$ and the continuous extension of \eqref{eq:blowup_n} when $r=0$.
	    \end{definition}
	    
	    A further generalisation can be defined using instead of the classical polar coordinate transformation a generalized one as follows.
	    
	    \begin{definition}
	         Let $a_i\in\mathbb{N}$ for $i\in\{1,2,\dots,n\}$. In the same setting as the previous definition we consider the generalized polar coordinate transformation
	        \begin{equation*}
	            \varphi:S^{n-1}\times I\to\mathbb{R}^n~,~~~~\varphi(\bar z_1,\dots,\bar z_n,r)=(r^{a_1}\bar z_1,\dots,r^{a_n}\bar z_n)
	        \end{equation*}
	        and define therefore the \textbf{weighted} or \textbf{quasihomogeneous polar blow-up} by
	        \begin{equation}\label{eq:blowup_weighted}
	            \hat F(\bar z_1,\dots,\bar z_n,r):=(D\varphi^{-1}_{(\bar z_1,\dots,\bar z_n,r)}\circ F\circ \varphi)(\bar z_1,\dots,\bar z_n,r)
	        \end{equation}
	        when $r>0$ and the continuous extension for $r=0$.
	    \end{definition}
	    
	    \begin{remark}
	        It is clear, that the definition of the rescaled blow-up can be applied not only for two-dimensional vector fields but also to higher dimensional blow-ups as well as weighted polar blow-ups. In this last case we call it the \textbf{(rescaled) weighted polar blow-up}.
	    \end{remark}
	    
	    Finally we want to briefly present the results for the analysis of a fold point and first need to introduce some notation. The general setting of a fold point (w.l.o.g. located at the origin) is a $(1,1)$-fast-slow system as \eqref{eq:theory:fast} satisfying the conditions 
	    \begin{equation*}
	        f(0,0,0)=0~,~~~~f_x(0,0,0)=0
	    \end{equation*}
	    and w.l.o.g. 
	    \begin{equation*}
	        f_{xx}(0,0,0)>0~,~~~~f_y(0,0,0)<0~,~~~~g(0,0,0)<0.
	    \end{equation*}
	    We divide the critical manifold into the attracting and repelling branches $S_0^a$ and $S_0^r$ respectively. Analogously we define $S_\varepsilon^a$ and $S_\varepsilon^r$ as the attracting and repelling branches on the slow manifold, see Section \ref{sec:fenichel} for more details. Furthermore, for some $\rho>0$ small we define the sections
	    \begin{equation*}
	        \Delta^{\text{in}}:=\{(x,\rho^2):x\in J_{\text{in}}\}~~~~\text{and}~~~~ \Delta^{\text{out}}:=\{(\rho,y):y\in J_{\text{out}}\}
	    \end{equation*}
	    where $J_{\text{in}}$ and $J_{\text{out}}$ are intervals such that the transition map $\Pi:\Delta^{\text{in}}\to\Delta^{\text{out}}$ is well define as shown in Figure \ref{fig:fold}.
	    
	    \begin{center}
	        \includegraphics[width=0.7\textwidth]{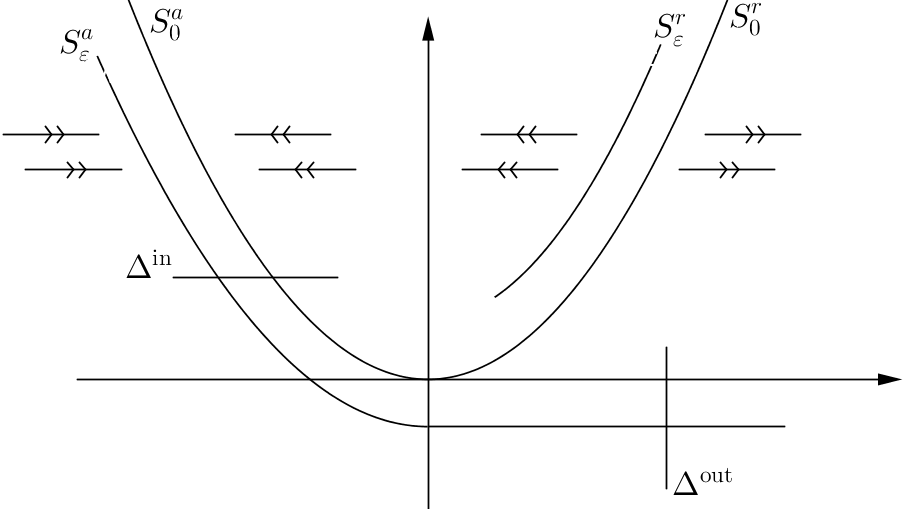}
	        \captionof{figure}{Phase plane near a fold point including the sections $\Delta^{\text{in}}$ and $\Delta^{\text{out}}$}\label{fig:fold}
	    \end{center}
	    
	    Now we can formulate the results of the flow near a fold point. The following theorem proofs that in fact the switch between the slow and fast flow at a fold point is conserved for $\varepsilon>0$ as shown in Figure \ref{fig:fold}. As mentioned in the introduction of this section we will not prove this theorem but only give the key ideas behind it, for a full proof see \cite{Krupa01a, Kuehn15}.
	    
	    \begin{theorem}\label{th:fold}
	        There exists $\varepsilon_0>0$ such that for all $\varepsilon\in(0,\varepsilon_0]$ the following holds:
	        \begin{enumerate}[(1.)]
	            \item The manifold $S_\varepsilon^a$ passes through $\Delta^\text{out}$ at $(\rho,H(\varepsilon))$ with $H(\varepsilon)\in\mathcal{O}(\varepsilon)$
	            \item $\Delta^{\text{in}}$ is mapped by $\Pi$ to an interval of size $\mathcal{O}(e^{-C/\varepsilon})$ for some $C>0$
	            \item The function $H(\varepsilon)$ has the asymptotic expansion
	            $$H(\varepsilon)=c_1\varepsilon^{2/3}+c_2\ln\varepsilon+c_3\varepsilon+\mathcal{O}(\varepsilon^{4/3}\ln\varepsilon)~~~~\text{ as }\varepsilon\to 0$$
	            \end{enumerate}
	    \end{theorem}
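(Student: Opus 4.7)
The plan is to follow the classical Krupa--Szmolyan argument via weighted geometric blow-up. First, I would bring the system near the fold to a suitable normal form. By the genericity assumptions $f(0,0,0)=f_x(0,0,0)=0$, $f_{xx}(0,0,0)>0$, $f_y(0,0,0)<0$, $g(0,0,0)<0$, a smooth change of coordinates together with a rescaling of $(x,y,t)$ turns \eqref{eq:theory:fast} near the origin into
\begin{equation*}
    x' = -y + x^2 + \mathcal{O}(x^3, xy, y^2, \varepsilon), \qquad y' = -\varepsilon(1 + \mathcal{O}(x,y,\varepsilon)),
\end{equation*}
so the critical manifold is locally $\{y = x^2 + \mathcal{O}(x^3)\}$ with attracting branch $S_0^a$ at $x<0$ and repelling branch $S_0^r$ at $x>0$. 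Away from the origin, Fenichel's theorems \ref{th:fenichel1}--\ref{th:fenichel3} already yield the slow manifolds $S_\varepsilon^a$, $S_\varepsilon^r$ up to any fixed distance from the fold.

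Next I would perform a quasihomogeneous blow-up adapted to the fold scaling. Treating $\varepsilon$ as a variable with $\varepsilon' = 0$, introduce
\begin{equation*}
    x = r\bar x, \qquad y = r^2 \bar y, \qquad \varepsilon = r^3 \bar\varepsilon, \qquad \bar x^2 + \bar y^2 + \bar\varepsilon^2 = 1,
\end{equation*}
and study the resulting vector field on the half-sphere $r\geq 0$ in three standard charts: an entering chart $K_1$ with $\bar y = 1$, a central (rescaling) chart $K_2$ with $\bar\varepsilon = 1$, and an exiting chart $K_3$ with $\bar x = 1$. After division by an appropriate power of $r$, the new vector field is nontrivial on $\{r=0\}$ and possesses hyperbolic equilibria corresponding to the entry and exit directions along the critical manifold. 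Within $K_1$, the (desingularized) attracting branch perturbs by Fenichel's first theorem to a smooth invariant curve that connects, as $r\to 0$, to an equilibrium carrying a one-dimensional center manifold; following this center manifold into $K_2$ provides a distinguished trajectory that I would identify as the continuation of $S_\varepsilon^a$. An analogous argument in $K_3$ extends the repelling branch up to the fold in blown-up coordinates.

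The heart of the argument, and the main obstacle, is the analysis in the rescaling chart $K_2$. There the blown-up equations reduce at leading order to a planar system whose integration is equivalent to a Riccati equation of the form $u' = -v + u^2$, $v' = -1$; its solutions can be written in terms of Airy functions. Tracking the image of the attracting center manifold from $K_1$ across $K_2$ into $K_3$ requires showing that it passes $\Delta^{\text{out}}$ at height $\mathcal{O}(\varepsilon)$ and extracting the precise asymptotics: the leading $c_1\varepsilon^{2/3}$ term is determined by the first zero of a specific Airy-type solution (this is where the $2/3$ scaling arises from the chosen weights $(1,2,3)$), while the logarithmic and linear corrections $c_2\ln\varepsilon$ and $c_3\varepsilon$ come from matching higher-order terms of the normal form between charts. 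All this delicate matching is carried out carefully in \cite{Krupa01a}.

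Finally, statement (2), the exponential contraction of $\Delta^{\text{in}}$, follows from estimating the divergence of the vector field along the attracting branch before the fold. Away from the fold, Fenichel's third theorem \ref{th:fenichel3} yields exponential contraction rates $\mathcal{O}(e^{-\lambda t})$ in forward time onto $S_\varepsilon^a$; integrating over the slow passage from $\Delta^{\text{in}}$ up to a neighborhood of the fold on the fast time scale $\tau = t/\varepsilon$ and controlling the loss of contraction through $K_1$ inside the blow-up then gives a net contraction of order $\mathcal{O}(e^{-C/\varepsilon})$ for some $C>0$. Putting the three chart analyses together with the matching conditions yields the full conclusion of the theorem.
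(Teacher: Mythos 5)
Your proposal follows essentially the same route as the paper's sketch: extending the system by $\varepsilon'=0$, applying the weighted polar blow-up with weights $(1,2,3)$, analysing the entry, rescaling and exit charts (with the Riccati/Airy analysis in the central chart), and blowing down — i.e.\ the Krupa--Szmolyan argument the paper explicitly defers to \cite{Krupa01a, Kuehn15}. Your write-up is in fact more detailed than the paper's own key-steps outline, and the details you supply are consistent with the cited reference.
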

	    \textit{Key steps of the proof.} The idea to proof Theorem \ref{th:fold} is to extend the model to the 3-dimensional system with $\varepsilon$ a dynamic variable with derivative 0 and use that the origin is non-hyperbolic. Therefore, in this setting we can in fact apply a blow-up to desingularize the fold point. The correct transformation to achieve this is a weighted polar blow-up with the generalized polar coordinate transformation
	    \begin{equation*}
	        \varphi(\bar x,\bar y,\bar\varepsilon,r):=(r\bar x,r^2\bar y,r^3\bar\varepsilon)=(x,y,\varepsilon)
	    \end{equation*}
	    Once we have the rescaled vector field $\bar F$ it remains to find charts to represent the blow-up in local coordinates. Then we can analyze the dynamics and finally translate the results to the original ``blow-down'' vector field.\\

\end{appendices}


\newpage
\pagestyle{plain}
\small

\end{document}